\newtheorem{theorem}{Theorem}[section]
\theoremstyle{plain}
\newtheorem*{theorem*}{Theorem}
\theoremstyle{plain}
\newtheorem{corollary}{Corollary}[section]
\newtheorem{definition}{Definition}[section]
\newtheorem{lemma}{Lemma}[section]
\newtheorem{remark}{Remark}[section]
\def\dys{\displaystyle}
\numberwithin{equation}{section}
\DeclareRobustCommand{\rchi}{{\mathpalette\irchi\relax}}
\newcommand{\irchi}[2]{\raisebox{\depth}{$#1\chi$}}
\begin{document}
\title[]{Regularity of solutions to a fractional elliptic problem with mixed Dirichlet-Neumann boundary data}

\author{J. Carmona}
\address[J. Carmona]{ Departamento de Matem\'aticas,
Universidad de Almer\'ia,
\newline Ctra. Sacramento s/n, La Ca\~nada de San Urbano, 04120 Almer\'ia,   Spain
}
\email{jcarmona@ual.es}

\author{E. Colorado}
\address[E. Colorado, A. Ortega]{Departamento de Matem\'aticas,
Universidad Carlos III de Madrid, \newline Av. Universidad 30, 28911 Legan\'es (Madrid), Spain}
\email[E. Colorado]{ecolorad@math.uc3m.es}%
\email[A. Ortega]{alortega@math.uc3m.es}

\author{T. Leonori}
\address[T. Leonori]{Dipartimento di Scienze di Base e Applicate per l'Ingegneria
Universit\`a di Roma \lq\lq Sapienza\rq\rq \newline
Via Antonio Scarpa 10,
00161 Roma}
\email{tommaso.leonori@sbai.uniroma1.it}%

\author{A. Ortega}

\date{\today}
\subjclass[2010]{35R11, 35B655.} %
\keywords{Fractional Laplacian, Mixed Boundary Conditions, Regularity.}%
\thanks{E. Colorado and A. Ortega are partially supported
by the Ministry of Economy and Competitiveness of \ \ Spain and FEDER
under Research Project MTM2016-80618-P. J. Carmona is partially supported by Ministerio de Econom\'ia y Competitividad (MINECO-FEDER), Spain under grant MTM2015-68210-P and Junta de Andaluc\'{\i}a FQM-194.}

\begin{abstract}
In this work we study regularity properties of solutions to
fractional elliptic problems with mixed Dirichlet-Neumann boundary
data when dealing with the Spectral Fractional Laplacian.
\end{abstract}
\maketitle
%\tableofcontents

\section{Introduction}

In this paper we study some regularity properties of the solutions
to fractional elliptic problems such as
\begin{equation}\label{problema1}
        \left\{
        \begin{tabular}{rcl}
        $(-\Delta)^su=f$ & &\mbox{in }$\Omega$, \\
        $B(u)=0$  & &on $\partial\Omega$, \\
        \end{tabular}
        \right.
                \tag{$P^s$}
\end{equation}
where $\frac{1}{2}<s<1$, $f\in L^p(\Omega),\ p>\frac{N}{2s}$ and
$\Omega$ is a bounded  domain of $\mathbb{R}^N$, $N\geq1$. By
$B(u)$ we mean the mixed Dirichlet-Neumann boundary condition, i.e.
\begin{equation*}
B(u)=u\rchi_{\Sigma_{\mathcal{D}}}+\frac{\partial u}{\partial \nu} \rchi_{\Sigma_{\mathcal{N}}},
\end{equation*}
where $\nu$ is the outwards normal to $\partial\Omega$, $\rchi_A$ stands for the characteristic function of the set $A$ and $\Omega$ satisfy
\begin{equation*}
        (\mathfrak{B})\ \left\{
        {\renewcommand{\arraystretch}{1.2}\begin{tabular}{c}
        $\Omega\subset \mathbb{R}^N$ is  a bounded Lipschitz domain \\
       $\Sigma_{\mathcal{D}}$ and $\Sigma_{\mathcal{N}}$ are smooth $(N-1)$-dimensional submanifolds of $\partial\Omega$, \\
       $\Sigma_{\mathcal{D}}$ is a closed manifold of positive $(N-1)$-dimensional Lebesgue measure,\\
                 $\displaystyle  | \Sigma_{\mathcal{D}}|=\alpha\in(0,|\partial\Omega|)$.\\
             $\Sigma_{\mathcal{D}}\cap\Sigma_{\mathcal{N}}=\emptyset\,,\ \Sigma_{\mathcal{D}}\cup\Sigma_{\mathcal{N}}=\partial\Omega\mbox{  and  }\Sigma_{\mathcal{D}}\cap\overline{\Sigma}_{\mathcal{N}}=\Gamma\,$ \\
              $\Gamma$ is a smooth
            $(N-2)$-dimensional submanifold of $\partial\Omega$.
        \end{tabular}}
        \right.
\end{equation*}
The main result we prove here is the following.
\begin{theorem}\label{holder_result}
Assume that $\Omega$  satisfies   hypotheses
$(\mathfrak{B})$ and let $u$ be the solution to problem \eqref{problema1} with $\frac{1}{2}<s<1$, $f\in L^p(\Omega),\ p>\frac{N}{2s}$.
Then $u\in \mathcal{C}^\gamma(\overline{\Omega})$ for some $0<\gamma<\frac{1}{2}$. Moreover, there exists a constant
$\mathscr{H}=\mathscr{H}(N,s,f,p,|\Sigma_{\mathcal{D}}|)>0$ such that
\begin{equation*}
|u(x)-u(y)|\leq\mathscr{H}|x-y|^{\gamma},\ \forall\ x,y\in\overline{\Omega}.
\end{equation*}
\end{theorem}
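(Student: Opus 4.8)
The plan is to pass to the Caffarelli--Silvestre/Stinga--Torrea extension, which turns the spectral fractional Laplacian with the mixed condition $B(u)=0$ into a local, degenerate-elliptic problem on the half-cylinder $\mathcal{C}_\Omega=\Omega\times(0,\infty)$. Writing $w=w(x,y)$ for the weak solution of
\begin{equation*}
\begin{cases}
-\operatorname{div}\bigl(y^{1-2s}\nabla w\bigr)=0 & \text{in }\mathcal{C}_\Omega,\\
w=0 & \text{on }\Sigma_{\mathcal{D}}\times(0,\infty),\\
\dfrac{\partial w}{\partial\nu}=0 & \text{on }\Sigma_{\mathcal{N}}\times(0,\infty),\\
-\kappa_s\,\dys\lim_{y\to 0^+}y^{1-2s}\dfrac{\partial w}{\partial y}=f & \text{on }\Omega\times\{0\},
\end{cases}
\end{equation*}
with $\kappa_s>0$ a normalising constant, one has $u(x)=w(x,0)$. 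Since $y^{1-2s}$ is a Muckenhoupt $A_2$ weight, the Fabes--Kenig--Serapioni De Giorgi--Nash--Moser theory for degenerate equations is available locally, while the $L^p$ datum on $\Omega\times\{0\}$ with $p>N/2s$ is subcritical for the associated Moser iteration. The strategy is to combine these ingredients with reflection arguments across the smooth pieces of $\partial\mathcal{C}_\Omega$ and with an explicit barrier near the interface $\Gamma$.

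First I would prove a global bound $\|u\|_{L^\infty(\Omega)}\le C(N,s,p,|\Sigma_{\mathcal{D}}|)\|f\|_{L^p(\Omega)}$ via a Stampacchia truncation argument in the energy formulation of \eqref{problema1}; the dependence on $|\Sigma_{\mathcal{D}}|>0$ enters through the Poincar\'e/Sobolev constant for functions vanishing on $\Sigma_{\mathcal{D}}$. A Moser iteration for the extended problem then yields $\|w\|_{L^\infty(\mathcal{C}_\Omega)}\le C\bigl(\|u\|_{L^\infty(\Omega)}+\|f\|_{L^p(\Omega)}\bigr)$. For the local H\"older estimates away from $\Gamma$: in the interior of $\Omega$, and — after flattening $\partial\Omega$ — performing an \emph{even} reflection in the normal variable across $\Sigma_{\mathcal{N}}\times(0,\infty)$ (resp.\ an \emph{odd} reflection of both $w$ and $f$ across $\Sigma_{\mathcal{D}}\times(0,\infty)$), the De Giorgi iteration for the resulting degenerate equation with the bottom boundary condition gives $w\in\mathcal{C}^{\gamma}$ on compact sets, hence $u\in\mathcal{C}^{\gamma}$ locally in $\overline{\Omega}\setminus\Gamma$, with $u=0$ on $\Sigma_{\mathcal{D}}$.

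The crux is the behaviour near $\Gamma$. Fix $P\in\Gamma$ and flatten $\partial\Omega$ near $P$ so that locally $\Omega=\{x_N>0\}$, $\Sigma_{\mathcal{D}}=\{x_N=0,\ x_{N-1}\le0\}$, $\Sigma_{\mathcal{N}}=\{x_N=0,\ x_{N-1}>0\}$ and $\Gamma=\{x_{N-1}=x_N=0\}$, so that $\mathcal{C}_\Omega$ becomes a neighbourhood of the origin in $\{x_N>0,\ y>0\}$. Let $(r,\theta)$ be polar coordinates in the $(x_{N-1},x_N)$-plane, $\theta\in(0,\pi)$, and set $\Phi=r^{1/2}\cos\tfrac{\theta}{2}$. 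Then $\Phi\ge0$, is $x$-harmonic and independent of $y$, hence solves $-\operatorname{div}(y^{1-2s}\nabla\Phi)=0$; it vanishes on the Dirichlet face, has vanishing conormal derivative on the Neumann face and on $\{y=0\}$, and $\Phi\sim\operatorname{dist}(\cdot,\Gamma\times\{0\})^{1/2}$. (The exponent $1/2$ is forced: it is the first positive root of $\cos(\mu\pi)=0$ for a separated solution $r^\mu g(\theta)$ with $g'(0)=0$, $g(\pi)=0$.) Subtracting from $w$ a particular solution carrying the datum $f$ — a Riesz potential of the reflected $f$, H\"older of some positive order — the comparison principle for the $A_2$-weighted operator with these mixed conditions gives, in a neighbourhood of $P$,
\begin{equation*}
|w(z)|\le C\bigl(\|u\|_{L^\infty}+\|f\|_{L^p}\bigr)\,\operatorname{dist}\bigl(z,\Gamma\times\{0\}\bigr)^{1/2},
\end{equation*}
the curvature of $\partial\Omega$ and of $\Gamma$ contributing only reabsorbable lower-order terms. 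Feeding this decay into the interior/reflected estimates applied on dyadic annuli around $\Gamma\times\{0\}$ (a standard interpolation argument) upgrades it to $w\in\mathcal{C}^{\gamma}$ near $P$ for some $\gamma<\tfrac12$, the value $1/2$ being the natural edge exponent and the obstruction to doing better. Finally, covering $\overline{\Omega}$ by finitely many neighbourhoods of the four types (interior, $\Sigma_{\mathcal{N}}$, $\Sigma_{\mathcal{D}}$, $\Gamma$), taking the smallest exponent and the worst constant — each controlled by $\|u\|_{L^\infty}+\|f\|_{L^p}\le C\|f\|_{L^p}$ from the first step — yields $u\in\mathcal{C}^{\gamma}(\overline{\Omega})$ with the asserted estimate and $\mathscr{H}=\mathscr{H}(N,s,f,p,|\Sigma_{\mathcal{D}}|)$.

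The main obstacle is precisely this last analysis near $\Gamma$: it is a genuine mixed Dirichlet--Neumann boundary value problem for a degenerate-elliptic operator near an edge, whose sharp singular exponent is $1/2$, so that $\gamma<1/2$ is unavoidable. Making the barrier comparison rigorous on the curved geometry (perturbing the flat model $\Phi$ and absorbing the curvature errors), controlling the contribution of the right-hand side $f\in L^p$, and converting edge-distance decay into genuine H\"older continuity is where the real work lies; the $L^\infty$ bound and the reflection estimates away from $\Gamma$ are comparatively routine consequences of the weighted De Giorgi--Nash--Moser machinery.
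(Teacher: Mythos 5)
Your strategy is genuinely different from the paper's. After the same extension and essentially the same Stampacchia truncation bound for $\|u\|_{L^\infty}$, the paper never flattens, reflects, or builds a barrier: it localizes $u=v+w$ on $\Omega(z,R)$, where $v$ solves the inhomogeneous mixed problem \eqref{nonhomo_abajo} and satisfies $\|v\|_{L^\infty(\Omega(z,R))}\leq C\|f\|_{L^p}R^{2s-\frac{N}{p}}$, and then proves H\"older continuity of the homogeneous extended part $W$ by running Stampacchia's De Giorgi level-set machinery \cite{S} directly in the weighted cylinder, using the Fabes--Kenig--Serapioni inequalities \cite{FKS} for the $A_2$ weight $y^{1-2s}$: Caccioppoli, weighted Sobolev/Poincar\'e on level sets, a local boundedness theorem and an oscillation-decay theorem in which interior, Dirichlet, Neumann and interface points are treated simultaneously through the admissible truncation levels $\mathcal{K}^{\pm}(Z)$. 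In the paper the exponent $\gamma<\tfrac12$ comes out of the iteration constants, whereas your edge eigenfunction $\Phi=r^{1/2}\cos(\theta/2)$ explains structurally why $1/2$ is the natural threshold; conversely, the paper's purely variational route needs no comparison principle, no flattening of $\Gamma$, and is closer in spirit to what works for merely Lipschitz data.

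There are, however, two genuine gaps in your argument as written, both at the crux. First, the barrier comparison near $\Gamma$ is asserted rather than proved: to conclude $|w|\leq C\,\Phi$ in a half-ball around $P\in\Gamma$ you must dominate $w$ on the whole boundary of the comparison region, but $\Phi$ vanishes on the entire Dirichlet face and degenerates on the lateral spherical part of the region as it approaches $\Sigma_{\mathcal{D}}$, so the naive constant $C=\|w\|_{L^\infty}/\min\Phi$ does not exist. You would need either a modulus of continuity for $w$ near $\Sigma_{\mathcal{D}}$ with constants uniform up to $\Gamma$ (which is precisely what is being proved) or a strictly positive mixed supersolution replacing $\Phi$, together with a weak comparison principle for the $y^{1-2s}$-degenerate operator with mixed lateral conditions and the Neumann-type bottom condition; none of this is supplied, and the curvature/perturbation step is also only announced. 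Second, your treatment of $f$ near $\Gamma$ via ``a Riesz potential of the reflected $f$'' is not well defined there, since the even (Neumann) and odd (Dirichlet) reflections are incompatible across the edge; the clean fix is the paper's local decomposition, i.e.\ absorb $f$ into the local solution $v$ of \eqref{nonhomo_abajo} and use the $R^{2s-\frac{N}{p}}$ bound above. With these two points repaired your scheme would yield the theorem, but as it stands the key edge estimate is a claim rather than a proof.
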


To prove Theorem \ref{holder_result} we follow some of the ideas in \cite{KS,S}. Using the De Giorgi truncation method, Stampacchia (see \cite{S})
established the regularity of solutions to the mixed boundary problem involving the classical Laplace operator. Due to the nonlocal nature of problem
\eqref{problema1}, some difficulties arise when trying to apply this truncation method to solutions to \eqref{problema1}. Based on the ideas of
\cite{CS,CT,BrCdPS}, at this point we will make full use of the local realization of the fractional operator $(-\Delta)^s$ in terms of certain
auxiliary degenerate elliptic problem. We use the results of \cite{FKS} to adapt the procedures of \cite{S} to the case of degenerate elliptic
equations with weights in the Muckenhoupt class $A_2$ (see \cite{FKS} for the precise definition as well as some useful properties of those weights).
\newline
In addition to Theorem \ref{holder_result}, following some ideas in \cite{ColP}, in the last part of the work we study the behaviour of the problem \eqref{problema1}
when we move the boundary condition in a regular way as follows. Given $I_{\varepsilon}=[\varepsilon,|\partial\Omega|]$ for some $\varepsilon>0$,
let us consider the family of closed sets $\{\Sigma_{\mathcal{D}}(\alpha)\}_{\alpha\in I_{\varepsilon}}$, satisfying
\begin{itemize}
\item[$(B_1)$] $\Sigma_{\mathcal{D}}(\alpha)$ has a finite number of connected components.
\item[$(B_2)$] $\Sigma_{\mathcal{D}}(\alpha_1)\subset\Sigma_{\mathcal{D}}(\alpha_2)$ if $\alpha_1<\alpha_2$.
\item[$(B_3)$]  $|\Sigma_{\mathcal{D}}(\alpha_1)|=\alpha_1\in I_{\varepsilon}$.
\end{itemize}
We denote by $\Sigma_{\mathcal{N}}(\alpha)=\partial\Omega\backslash\Sigma_{\mathcal{D}}(\alpha)$ and $\Gamma(\alpha)=\Sigma_{\mathcal{D}}(\alpha)\cap\overline{\Sigma}_{\mathcal{N}}(\alpha)$. For a family of this type we consider the corresponding family of mixed boundary value problems
\begin{equation} \label{p_move}
        \left\{
        \begin{tabular}{rcl}
        $(-\Delta)^su=f$ & &in $\Omega\subset \mathbb{R}^{n}$, \\
        $B_{\alpha}(u)=0$  & &on $\partial\Omega$, \\
        \end{tabular}
        \right.
        \tag{$P_{\alpha}^s$}
\end{equation}
where $B_{\alpha}(u)$ is the boundary condition associated to the parameter $\alpha$ in the previous hypotheses and the boundary manifolds $\Sigma_{\mathcal{D}}(\alpha)$ and $\Sigma_{\mathcal{N}}(\alpha)$ satisfy the corresponding hypotheses $(\mathfrak{B}_{\alpha})$. In this scenario we prove the following result.
\begin{theorem}\label{cor:reg}
Given $\Omega$ a smooth bounded domain such that the family $\{\Sigma_{\mathcal{D}}(\alpha)\}_{\alpha\in I_{\varepsilon}}$
satisfies the hypotheses $(\mathfrak{B}_{\alpha})$ and $(B_1)$--$(B_3)$, let $u_{\alpha}$ be the solution to
\eqref{p_move} with $\frac{1}{2}<s<1$, $f\in L^p(\Omega)$ and $\ p>\frac{N}{2s}$. Then, there exist two constants $0<\gamma<\frac12$ and $\mathscr{H}_{\varepsilon}>0$ both independent from $\alpha\in[\varepsilon,|\partial\Omega|]$ such that
\begin{equation*}
\|u_{\alpha}\|_{\mathcal{C}^{\gamma} (\overline{\Omega})}\leq \mathscr{H}_{\varepsilon}.
\end{equation*}
\end{theorem}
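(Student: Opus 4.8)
The plan is to go back to the proof of Theorem~\ref{holder_result} and to track, step by step, how each constant produced there depends on the measure $\alpha=|\Sigma_{\mathcal D}(\alpha)|$ of the Dirichlet part, showing that every such dependence becomes harmless once $\alpha$ is confined to the compact interval $I_\varepsilon$. Recall that that proof runs in two stages: an a priori $L^\infty$ bound of the form $\|u_\alpha\|_{L^\infty(\Omega)}\le C\|f\|_{L^p(\Omega)}$, obtained by Stampacchia's truncation applied to the local extension problem for $(-\Delta)^s$ on the half-cylinder $\mathcal C_\Omega=\Omega\times(0,\infty)$ with the $A_2$-weight $y^{1-2s}$; and then a local oscillation-decay (De Giorgi) argument, performed in turn in the interior of $\Omega$, near the Neumann face $\Sigma_{\mathcal N}(\alpha)$, near the Dirichlet face $\Sigma_{\mathcal D}(\alpha)$, and near the interface $\Gamma(\alpha)$, which produces both the exponent $\gamma\in(0,\tfrac12)$ and the Hölder seminorm.

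First I would make the $L^\infty$ bound uniform. Its constant depends on $N$, $s$, $p$, $\|f\|_{L^p}$ and on the constants in the trace--Sobolev and Poincaré inequalities for the energy space of the extension problem, i.e. for the subspace of functions of finite weighted Dirichlet energy on $\mathcal C_\Omega$ that vanish on $\Sigma_{\mathcal D}(\alpha)\times(0,\infty)$; equivalently, on the first eigenvalue $\lambda_1(\alpha)$ of the associated mixed spectral problem. Hypothesis $(B_2)$ states that the sets $\Sigma_{\mathcal D}(\alpha)$ are nested, so this energy space shrinks as $\alpha$ grows; hence $\lambda_1(\alpha)\ge\lambda_1(\varepsilon)>0$ and the trace--Sobolev constant is bounded by its value at $\alpha=\varepsilon$, for every $\alpha\in I_\varepsilon$. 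This already yields a uniform bound $\|u_\alpha\|_{L^\infty(\Omega)}\le M_\varepsilon$ and a uniform bound on the weighted energy of the extension of $u_\alpha$.

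With the uniform $L^\infty$ bound in hand I would rerun the oscillation-decay step. The interior estimate, the estimate near $\Sigma_{\mathcal N}(\alpha)$ (obtained by reflecting the extension evenly across the Neumann portion of the lateral boundary) and the estimate near $\Sigma_{\mathcal D}(\alpha)$ (obtained by odd reflection across the Dirichlet portion) rely only on the weighted energy and Harnack inequalities of \cite{FKS}, whose constants are universal --- they depend on the $A_2$-constant of $y^{1-2s}$, which is independent of $\alpha$ --- together with $M_\varepsilon$ and $\|f\|_{L^p}$, so they are uniform in $\alpha$. It remains to control the estimate in a neighbourhood of the interface $\Gamma(\alpha)$. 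Flattening $\partial\Omega$, which is fixed and smooth and hence done once and for all independently of $\alpha$, reduces this to a model mixed problem on a half-ball whose flat boundary is split into a Dirichlet and a Neumann region by the image of $\Gamma(\alpha)$. By $(B_1)$ one covers $\Gamma(\alpha)$ by finitely many such charts, and, since the family $\{\Sigma_{\mathcal D}(\alpha)\}$ varies regularly and $\alpha$ runs over the compact interval $I_\varepsilon$, these chart maps can be chosen with uniformly bounded norms; the model estimate then applies with constant and exponent independent of $\alpha$. Taking the worst of the finitely many local exponents yields a single $\gamma\in(0,\tfrac12)$, and combining the local seminorm estimates with $M_\varepsilon$ yields a single constant $\mathscr H_\varepsilon$, which is the claimed uniform bound.

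The crux, and the expected main obstacle, is precisely this last point: the uniformity of the estimate at the interface. Unlike the interior, Neumann and Dirichlet estimates --- which do not see where $\Sigma_{\mathcal D}(\alpha)$ lies --- the constant coming from the model problem at the interface is governed by the local geometry of $\Gamma(\alpha)$: its principal curvatures and the way in which $\Sigma_{\mathcal D}(\alpha)$ and $\Sigma_{\mathcal N}(\alpha)$ meet along it. It is exactly here that hypotheses $(B_1)$--$(B_3)$ and the regular dependence of the family $\{\Sigma_{\mathcal D}(\alpha)\}$ on $\alpha$ enter: they guarantee a finite atlas for $\Gamma(\alpha)$ with chart norms bounded uniformly over $I_\varepsilon$. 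Granting that, what is left is only the bookkeeping of constants already produced in the proof of Theorem~\ref{holder_result}.
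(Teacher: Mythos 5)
Your outline reproduces the general strategy (track every constant from the proof of Theorem \ref{holder_result} and show it is uniform for $\alpha\in I_\varepsilon$), and the first stage — the uniform $L^\infty$ bound via the monotonicity of the trace/Sobolev constant $C_{\mathcal D}(\alpha)$ (equivalently of $\lambda_1(\alpha)$) coming from the nestedness hypothesis $(B_2)$ — is sound and consistent with the paper. But at the point you yourself identify as the crux, the proposal has a genuine gap: you assert that near $\Gamma(\alpha)$ the model mixed problem can be treated ``with chart maps of uniformly bounded norms'' because ``the family $\{\Sigma_{\mathcal D}(\alpha)\}$ varies regularly and $\alpha$ runs over a compact interval,'' and then you grant the conclusion. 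No such regular-dependence or uniform-chart hypothesis appears in $(B_1)$--$(B_3)$ or $(\mathfrak B_\alpha)$, and compactness of $I_\varepsilon$ alone gives nothing without some continuity of the constants in $\alpha$, which is precisely what has to be proved. In other words, the step that carries the whole theorem is assumed rather than established.

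The paper closes exactly this gap by a different, explicit mechanism. First, the $\alpha$-uniformity of the De Giorgi machinery is reduced to showing that the constant $\beta_s$ in the weighted Sobolev inequality \eqref{sobA} can be chosen independent of $\alpha$; this is done at Dirichlet points and at interface points $Z\in\Gamma^*(\alpha)$ by bounding from below, uniformly in $\alpha$, the conical density $\Pi(X_0,\mathcal D_\rho(Z),\mathcal O_\rho(Z))$ of the Dirichlet set, after an explicit bi-Lipschitz flattening of $\mathscr C_\Omega(Z,\rho)$ near $\Gamma^*(\alpha)$ (Stampacchia's representation of $\Pi$ as an integral of $\cos\psi/|X_0-Y|^N$). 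This makes $\Lambda$, $\overline\eta$, $\mathcal H$ and the exponent $\gamma$ independent of $\alpha$. Second — and this is the part your argument misses entirely — the surviving $\alpha$-dependence sits in the admissible radii $\overline\rho_\alpha(Z)$ (distances to $\Sigma^*_{\mathcal D}(\alpha)$, $\Gamma^*(\alpha)$) and hence in $\delta_{\texttt H,\alpha}$ and $\mathscr H_\alpha$; it is controlled not by compactness but by the monotonicity $(B_2)$: $\delta_{\texttt H,\alpha_1}\le\delta_{\texttt H,\alpha_2}$ for $\alpha_1\le\alpha_2$, so $\mathscr H_\alpha\le\mathscr H_\varepsilon$ and the worst case is $\alpha=\varepsilon$. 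This also explains why the lower bound $\alpha\ge\varepsilon$ is essential: as $\alpha\to0^+$ one has $\overline\rho_\alpha(Z)\to0$, $\delta_{\texttt H,\alpha}\to0$ and $\mathscr H_\alpha\to+\infty$, a degeneracy your chart-compactness argument would not detect. (A minor further discrepancy: the paper never uses even/odd reflection across the Neumann/Dirichlet faces; the boundary cases are handled directly through the truncation sets $\mathcal K^\pm(Z)$ and the weighted inequalities of \cite{FKS}, though a reflection route could in principle be legitimate away from the interface.)
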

As we will see in the proof of Theorem \ref{cor:reg}, when one takes $\alpha\to0^+$ the control of the H\"older norm of such a family is lost. Hence, it is necessary bound from below  the measure of the family $\{\Sigma_{\mathcal{D}}(\alpha)\}_{\alpha\in I_{\varepsilon}}$, in order to guarantee the control on the H\"older norm for the family $\{u_{\alpha}\}_{\alpha\in I_{\varepsilon}}$.

\medskip 

Let us stress that problem related to the spectral fractional Laplacian with mixed boundary conditions are news and, to our knowledge, have been treated only in \cite{CCLO, CO}.

\section{Functional setting and preliminaries}
As far as the fractional Laplace operator is concerned, we recall its definition given through the spectral decomposition.
Let $(\varphi_i,\lambda_i)$ be the
eigenfunctions (normalized with respect to the $L^2(\Omega)$-norm)
and the eigenvalues of $(-\Delta)$ equipped with homogeneous mixed Dirichlet-Neumann boundary data.
Then, $(\varphi_i,\lambda_i^s)$ are the eigenfunctions and
eigenvalues of the fractional operator $(-\Delta)^s$, where given $\displaystyle u_i(x)=\sum_{j\geq1}\langle u_i,\varphi_j\rangle\varphi_j$, $i=1,2$
\[
\langle
 (-\Delta)^s u_1, u_2
\rangle
=
\sum_{j\ge 1} \lambda_j^s\langle u_1,\varphi_j\rangle \langle u_2,\varphi_j\rangle,
\]
i.e., the action of the fractional operator on a smooth function $u_1$ is given by
\begin{equation*}
(-\Delta)^su_1=\sum_{j\ge 1} \lambda_j^s\langle u_1,\varphi_j\rangle\varphi_j.
\end{equation*}
As a consequence, the fractional Laplace operator $(-\Delta)^s$ is well defined through its spectral decomposition in the
following space of functions that vanish on $\Sigma_{\mathcal{D}}$,
\begin{equation*}
H_{\Sigma_{\mathcal{D}}}^s(\Omega)=\left\{u=\sum_{j\ge 1} a_j\varphi_j\in L^2(\Omega):\ \|u\|_{H_{\Sigma_{\mathcal{D}}^s}(\Omega)}^2=
\sum_{j\ge 1} a_j^2\lambda_j^s<\infty\right\}.
\end{equation*}
Observe that since $u\in H_{\Sigma_{\mathcal{D}}}^s(\Omega)$, it follows that
\begin{equation*}
\|u\|_{H_{\Sigma_{\mathcal{D}}}^s(\Omega)}=\left\|(-\Delta)^{\frac{s}{2}}u\right\|_{L^2(\Omega)}.
\end{equation*}
As it is proved in \cite[Theorem 11.1]{LM}, if $0<s\le \frac{1}{2}$ then $H_0^s(\Omega)=H^s(\Omega)$ and, therefore, also $H_{\Sigma_{\mathcal{D}}}^s(\Omega)=H^s(\Omega)$, while for $\frac 12<s<1$, $H_0^s(\Omega)\subsetneq H^s(\Omega)$. Hence, the range $\frac 12<s<1$ guarantees that $H_{\Sigma_{\mathcal{D}}}^s(\Omega)\subsetneq H^s(\Omega)$, provides us the correct functional space to study the mixed boundary problem \eqref{problema1}.\newline
This definition of the fractional powers of the Laplace operator allows us to integrate by parts in the appropriate spaces, so that a natural definition of weak solution to problem $(P_{s})$ is the following.
\begin{definition}
We say that $u\in H_{\Sigma_{\mathcal{D}}}^s(\Omega)$ is a solution to \eqref{problema1} if
\begin{equation*}
\int_{\Omega}(-\Delta)^{s/2}u \,(-\Delta)^{s/2}\psi dx=\int_{\Omega}f\psi dx,\ \ \text{for any}\ \psi\in H_{\Sigma_{\mathcal{D}}}^s(\Omega).
\end{equation*}
\end{definition}
Due to the nonlocal nature of the fractional operator $(-\Delta)^s$ some difficulties arise when one tries to obtain an explicit expression  of the action of the fractional Laplacian on a given function. In order to overcome this difficuly, we use the ideas by Caffarelli and Silvestre (see \cite{CS})  together with those of \cite{BrCdPS, CT}   to give an equivalent definition of the operator $(-\Delta)^s$ by means of an auxiliary problem that we introduce next. 

Given any domain $\Omega \subset \mathbb{R}^N$, we set the cylinder $\mathscr{C}_{\Omega}=\Omega\times(0,\infty)\subset\mathbb{R}_+^{N+1}$. We denote by $(x,y)$ those points that belong to $\mathscr{C}_{\Omega}$ and by $\partial_L\mathscr{C}_{\Omega}=\partial\Omega\times[0,\infty)$ the lateral boundary of the  cylinder. Let us also denote by  $\Sigma_{\mathcal{D}}^*=\Sigma_{\mathcal{D}}\times[0,\infty)$ and $\Sigma_{\mathcal{N}}^*=\Sigma_{\mathcal{N}}\times[0,\infty)$ as well as $\Gamma^*=\Gamma\times[0,\infty)$. It is clear that, by construction,
\begin{equation*}
\Sigma_{\mathcal{D}}^*\cap\Sigma_{\mathcal{N}}^*=\emptyset\,, \quad \Sigma_{\mathcal{D}}^*\cup\Sigma_{\mathcal{N}}^*=\partial_L\mathscr{C}_{\Omega} \quad \mbox{and} \quad \Sigma_{\mathcal{D}}^*\cap\overline{\Sigma_{\mathcal{N}}^*}=\Gamma^*\,.
\end{equation*}
 Given a function $u\in H_{\Sigma_{\mathcal{D}}}^s(\Omega)$ we define its $s$-harmonic extension function, denoted by $U (x,y)=E_{s}[u(x)]$, as the solution to the problem
\begin{equation*}
        \left\{
        \begin{array}{rlcl}
        \displaystyle   -\text{div}(y^{1-2s}\nabla U (x,y))&\!\!\!\!=0  & & \mbox{ in } \mathscr{C}_{\Omega} , \\
        \displaystyle B(U(x,y) )&\!\!\!\!=0   & & \mbox{ on } \partial_L\mathscr{C}_{\Omega} , \\
         \displaystyle U(x,0)&\!\!\!\!=u(x)  & &  \mbox{ on } \Omega\times\{y=0\} .
        \end{array}
        \right.
\end{equation*}
where
\begin{equation*}
B(U)=U\rchi_{\Sigma_{\mathcal{D}}^*}+\frac{\partial U}{\partial \nu}\rchi_{\Sigma_{\mathcal{N}}^* },
\end{equation*}
being $\nu$, with an abuse of notation\footnote{Let $\nu$ be the outwards normal to $\partial\Omega$ and $\nu_{(x,y)}$ the outwards normal
 to $\mathscr{C}_{\Omega}$ then, by construction, $\nu_{(x,y)}=(\nu,0)$, $y>0$.}, the outwards normal to
 $\partial_L\mathscr{C}_{\Omega}$.
 Following the well known result by Caffarelli and Silvestre (see \cite{CS}), $U$ is related to the fractional Laplacian of
 the original function through the formula
\begin{equation*}
\frac{\partial U}{\partial \nu^s}:= -\kappa_s \lim_{y\to 0^+} y^{1-2s}\frac{\partial U}{\partial y}=(-\Delta)^su(x),
\end{equation*}
where $\kappa_s$ is a  suitable positive constant (see \cite{BrCdPS} for its exact value). The extension function belongs to the space
\begin{equation*}
\mathcal{X}_{\Sigma_{\mathcal{D}}}^s(\mathscr{C}_{\Omega}) : =\overline{\mathcal{C}_{0}^{\infty}
((\Omega\cup\Sigma_{\mathcal{N}})\times[0,\infty))}^{\|\cdot\|_{\mathcal{X}_{\Sigma_{\mathcal{D}}}^s(\mathscr{C}_{\Omega})}},
\end{equation*}
where we define
$$
\|\cdot\|_{\mathcal{X}_{\Sigma_{\mathcal{D}}}^s(\mathscr{C}_{\Omega})}^2:=\kappa_s\int_{\mathscr{C}_{\Omega}}\mkern-5mu y^{1-2s} |\nabla (\cdot)|^2dxdy.
$$
Note that $\mathcal{X}_{\Sigma_{\mathcal{D}}}^s(\mathscr{C}_{\Omega})$ is a Hilbert space equipped with the norm
$\|\cdot\|_{\mathcal{X}_{\Sigma_{\mathcal{D}}}^s(\mathscr{C}_{\Omega})}$ which is induced by the scalar product
\begin{equation*}
\langle U, V \rangle_{\mathcal{X}_{\Sigma_{\mathcal{D}}}^s(\mathscr{C}_{\Omega})}=\kappa_s
\int_{\mathscr{C}_{\Omega}}y^{1-2s} \langle\nabla U,\nabla V\rangle dxdy.
\end{equation*}
Moreover, the following inclusions are satisfied,
\begin{equation} \label{embedd}
\mathcal{X}_0^s(\mathscr{C}_{\Omega}) \subset \mathcal{X}_{\Sigma_{\mathcal{D}}}^s(\mathscr{C}_{\Omega}) \subsetneq \mathcal{X}^s(\mathscr{C}_{\Omega}),
\end{equation}
being  $\mathcal{X}_0^s(\mathscr{C}_{\Omega})$ the space of functions that belongs to $\mathcal{X}^s(\mathscr{C}_{\Omega})\equiv H^1(\mathscr{C}_{\Omega},y^{1-2s}dxdy)$ and vanish on the lateral boundary of $\mathscr{C}_{\Omega}$.\newline
Using the above arguments we can reformulate the problem \eqref{problema1} in terms of the extension problem as follows:
\begin{equation}\label{extension_problem}
        \left\{
        \begin{array}{rlcl}
        \displaystyle   -\text{div}(y^{1-2s}\nabla U)&\!\!\!\!=0  & & \mbox{ in } \mathscr{C}_{\Omega} , \\
        \displaystyle B(U)&\!\!\!\!=0   & & \mbox{ on } \partial_L\mathscr{C}_{\Omega} , \\
         \displaystyle \frac{\partial U}{\partial \nu^s}&\!\!\!\!=f  & &  \mbox{ on } \Omega\times\{y=0\} .
        \end{array}
        \right.
        \tag{$P_{s}^*$}
\end{equation}

Next, we specify

\begin{definition}
An {\rm energy solution} to problem \eqref{extension_problem} is a function $U\in \mathcal{X}_{\Sigma_{\mathcal{D}}}^s(\mathscr{C}_{\Omega})$ such that
\begin{equation}\label{def:soldebil}
\kappa_s\int_{\mathscr{C}_{\Omega}} y^{1-2s} \langle\nabla U,\nabla\varphi\rangle  \  dxdy=\int_{\Omega} f(x)\varphi(x,0)dx, \qquad \forall  \varphi\in \mathcal{X}_{\Sigma_{\mathcal{D}}}^s(\mathscr{C}_{\Omega}).
\end{equation}
\end{definition}

If  $U\in \mathcal{X}_{\Sigma_{\mathcal{D}}}^s(\mathscr{C}_{\Omega})$ is the solution to problem \eqref{extension_problem} we can associate the
function $u (x) =Tr[U(x,y) ]=U(x,0)$, that  belongs to $H_{\Sigma_{\mathcal{D}}}^s(\Omega)$, and solves problem \eqref{problema1}.
Moreover, also the vice versa is true: given a solution $u \in H_{\Sigma_{\mathcal{D}}}^s(\Omega)$ we can define its $s$-harmonic extension
$U\in \mathcal{X}_{\Sigma_{\mathcal{D}}}^s(\mathscr{C}_{\Omega})$,
as the solution to \eqref{extension_problem}.
Thus, both formulations are equivalent and the {\it Extension operator}
$$
E_s: H_{\Sigma_{\mathcal{D}}}^s(\Omega) \to \mathcal{X}_{\Sigma_{\mathcal{D}}}^s(\mathscr{C}_{\Omega}),
$$
allows us to switch between both of them.

Accordingly to \cite{CS,BrCdPS}, due to the choice of the constant $\kappa_s$, the extension operator $E_s$ is an isometry, i.e.
\begin{equation}\label{norma2}
\|E_s[\varphi] (x,y) \|_{\mathcal{X}_{\Sigma_{\mathcal{D}}}^s(\mathscr{C}_{\Omega})}=
\|\varphi (x) \|_{H_{\Sigma_{\mathcal{D}}}^s(\Omega)},\ \text{for all}\ \varphi\in H_{\Sigma_{\mathcal{D}}}^s(\Omega).
\end{equation}

Let us also recall the \textit{trace inequality}, that is   a  useful tool we exploit in many proofs in this paper (see \cite{BrCdPS}):

there exists  $  C=C(N,s,r,|\Omega|)$
such that $\forall z\in\mathcal{X}_0^s(\mathscr{C}_{\Omega})$
\begin{equation*}
C
\left(\int_{\Omega}|z(x,0)|^rdx\right)^{\frac{2}{r}}
\leq \int_{\mathscr{C}_{\Omega}}y^{1-2s}|\nabla z(x,y)|^2dxdy ,
\end{equation*}
with  $1\leq r\leq 2^*_s,\ N>2s$, with $2^*_s= \frac{2N}{N-2s}$.

Observe that such inequality turns out to be, in fact,
equivalent to the fractional Sobolev inequality:
\begin{equation*}
  C\left(\int_{\Omega}|v|^rdx\right)^{\frac{2}{r}}
\leq \int_{\Omega}|(-\Delta)^{\frac{s}2}v|^2dx
,\qquad
\forall v\in H_{0}^s(\Omega),\ 1\leq r\leq 2^*_s ,\ N>2s.
\end{equation*}
When mixed boundary conditions are considered, the situation is quite similar since the Dirichlet condition is imposed on a set
$\Sigma_{\mathcal{D}} \subset \partial \Omega$ such that $|\Sigma_{\mathcal{D}}|=\alpha>0$. Hence, thanks to \eqref{embedd},
there exists a positive constant $C_{\mathcal{D}}=C_{\mathcal{D}}(N,s,|\Sigma_{\mathcal{D}}|)$ such that
\begin{equation}\label{const}
0<\inf_{\substack{u\in H_{\Sigma_{\mathcal{D}}}^s(\Omega)\\ u\not\equiv 0}}\frac{\|u\|_{H_{\Sigma_{\mathcal{D}}}^s(\Omega)}^2}{
\|u\|_{L^{2_s^*}(\Omega)}^2}:=C_{\mathcal{D}}
<\inf_{\substack{u\in H_{0}^s(\Omega)\\ u\not\equiv 0}}\frac{\|u\|_{H_{0}^s(\Omega)}^2}{\|u\|_{L^{2_s^*}(\Omega)}^2}.
\end{equation}
\begin{remark}%\label{rem:sobconst}
It is worth to observe (see \cite{CO}, \cite{CCLO}) that
$C_{\mathcal{D}}(N,s,|\Sigma_{\mathcal{D}}|)\leq
2^{-\frac{2s}{N}}C(N,s,2_s^*)$. Moreover, having in mind the
spectral definition of the fractional operator and by H\"older
inequality, it follows that $C_{\mathcal{D}}\leq
|\Omega|^{\frac{2s}{N}}\lambda_1^s(\alpha)$, with
$\lambda_1(\alpha)$ the first eigenvalue of the Laplace operator
with mixed boundary conditions on the sets
$\Sigma_{\mathcal{D}}=\Sigma_{\mathcal{D}}(\alpha)$ and
$\Sigma_{\mathcal{N}}= \Sigma_{\mathcal{N}}(\alpha)$. Since
$\lambda_1(\alpha)\to0$ as $\alpha\to0^+$, see \cite[Lemma
4.3]{ColP}, we conclude that $C_{\mathcal{D}}\to0$ as
$\alpha\to0^+$.
\end{remark}

Gathering together \eqref{norma2} and \eqref{const}, we obtain,
\begin{equation}\label{poinc}
C_{\mathcal{D}}\left(\int_\Omega |\varphi(x,0)|^{2^*_s} dx\right)^{\frac{2}{2^*_s}}\leq
\|\varphi(x,0)\|_{H_{\Sigma_{\mathcal{D}}}^s(\Omega)}^2=
\|E_s[\varphi(x,0)]\|_{\mathcal{X}_{\Sigma_{\mathcal{D}}}^s(\mathscr{C}_{\Omega})}^2.
\end{equation}
With this Sobolev-type inequality in hand we can prove a trace inequality adapted to the mixed boundary data framework.

\begin{lemma}\label{lem:traceineq}
There exists a constant $C_{\mathcal{D}}=C_{\mathcal{D}}(N,s,|\Sigma_{\mathcal{D}}|)>0$ such that,
\begin{equation}\label{eq:traceineq}
C_{\mathcal{D}}
\left(\int_\Omega
 |\varphi(x,0)|^{2^*_s}  )dx\right)^{\frac{2}{2^*_s}}
\leq
\int_{\mathscr{C}_{\Omega}} y^{1-2s} |\nabla \varphi|^2 dxdy
,\qquad \forall\varphi \in \mathcal{X}_{\Sigma_{\mathcal{D}}}^s(\mathscr{C}_{\Omega}).
\end{equation}
\end{lemma}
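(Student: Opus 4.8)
The plan is to reduce \eqref{eq:traceineq} to the Sobolev‑type inequality \eqref{poinc} (equivalently, to the isometry \eqref{norma2} together with \eqref{const}) via Dirichlet's principle for the $s$‑harmonic extension. Fix an arbitrary $\varphi\in\mathcal{X}_{\Sigma_{\mathcal{D}}}^s(\mathscr{C}_{\Omega})$. Since the trace operator maps $\mathcal{X}_{\Sigma_{\mathcal{D}}}^s(\mathscr{C}_{\Omega})$ into $H_{\Sigma_{\mathcal{D}}}^s(\Omega)$, the function $u:=Tr[\varphi]=\varphi(\cdot,0)$ is a well-defined element of $H_{\Sigma_{\mathcal{D}}}^s(\Omega)$, so \eqref{poinc} applies to it and gives
\[
C_{\mathcal{D}}\left(\int_\Omega |\varphi(x,0)|^{2^*_s}\,dx\right)^{\frac{2}{2^*_s}}\leq \|E_s[u]\|_{\mathcal{X}_{\Sigma_{\mathcal{D}}}^s(\mathscr{C}_{\Omega})}^2 .
\]
Thus it only remains to compare the energy of the $s$‑harmonic extension $E_s[u]$ with that of the arbitrary competitor $\varphi$, both of which have the same trace $u$ on $\Omega\times\{y=0\}$.

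The key step is the inequality $\|E_s[u]\|_{\mathcal{X}_{\Sigma_{\mathcal{D}}}^s(\mathscr{C}_{\Omega})}\le\|\varphi\|_{\mathcal{X}_{\Sigma_{\mathcal{D}}}^s(\mathscr{C}_{\Omega})}$. Set $\psi:=\varphi-E_s[u]$. Since $\varphi$ and $E_s[u]$ both lie in $\mathcal{X}_{\Sigma_{\mathcal{D}}}^s(\mathscr{C}_{\Omega})$ and share the trace $u$, the function $\psi$ belongs to $\mathcal{X}_{\Sigma_{\mathcal{D}}}^s(\mathscr{C}_{\Omega})$ and satisfies $\psi(\cdot,0)=0$; hence $\psi$ is an admissible test function in the weak formulation of the extension problem defining $E_s[u]$ (the Dirichlet datum is prescribed on $\Sigma_{\mathcal{D}}^*\cup(\Omega\times\{0\})$, while the homogeneous Neumann condition on $\Sigma_{\mathcal{N}}^*$ is natural, so no constraint is imposed on $\psi$ there). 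Testing $-\text{div}(y^{1-2s}\nabla E_s[u])=0$ against $\psi$ yields
\[
\langle E_s[u],\psi\rangle_{\mathcal{X}_{\Sigma_{\mathcal{D}}}^s(\mathscr{C}_{\Omega})}=\kappa_s\int_{\mathscr{C}_{\Omega}}y^{1-2s}\langle\nabla E_s[u],\nabla\psi\rangle\,dxdy=0,
\]
so, by the Pythagorean identity in the Hilbert space $\mathcal{X}_{\Sigma_{\mathcal{D}}}^s(\mathscr{C}_{\Omega})$,
\[
\|\varphi\|_{\mathcal{X}_{\Sigma_{\mathcal{D}}}^s(\mathscr{C}_{\Omega})}^2=\|E_s[u]\|_{\mathcal{X}_{\Sigma_{\mathcal{D}}}^s(\mathscr{C}_{\Omega})}^2+\|\psi\|_{\mathcal{X}_{\Sigma_{\mathcal{D}}}^s(\mathscr{C}_{\Omega})}^2\ge\|E_s[u]\|_{\mathcal{X}_{\Sigma_{\mathcal{D}}}^s(\mathscr{C}_{\Omega})}^2 .
\]
Chaining the last three displays and recalling $\|\cdot\|_{\mathcal{X}_{\Sigma_{\mathcal{D}}}^s(\mathscr{C}_{\Omega})}^2=\kappa_s\int_{\mathscr{C}_{\Omega}}y^{1-2s}|\nabla(\cdot)|^2\,dxdy$, one gets $C_{\mathcal{D}}\big(\int_\Omega|\varphi(x,0)|^{2^*_s}dx\big)^{2/2^*_s}\le\kappa_s\int_{\mathscr{C}_{\Omega}}y^{1-2s}|\nabla\varphi|^2\,dxdy$; absorbing $\kappa_s$ into the constant (which still depends only on $N$, $s$, $|\Sigma_{\mathcal{D}}|$) gives \eqref{eq:traceineq}.

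The only delicate point is the identification of $\psi=\varphi-E_s[u]$ as a legitimate test function: one must know that variations in $\mathcal{X}_{\Sigma_{\mathcal{D}}}^s(\mathscr{C}_{\Omega})$ vanishing on $\Omega\times\{0\}$ are exactly the admissible ones for the mixed Dirichlet–Neumann extension problem, and that the weak formulation of $E_s[u]$ delivers orthogonality to all such $\psi$ (here it is essential that the homogeneous Neumann condition on $\Sigma_{\mathcal{N}}^*$ is natural, so that $\psi$ need not vanish on $\Sigma_{\mathcal{N}}^*$). Granting this — which is just Dirichlet's principle, i.e.\ the fact that $E_s[u]$ is the energy‑minimizing extension of $u$ within $\mathcal{X}_{\Sigma_{\mathcal{D}}}^s(\mathscr{C}_{\Omega})$ — everything else is a one‑line consequence of \eqref{poinc} and the Hilbert‑space structure of $\mathcal{X}_{\Sigma_{\mathcal{D}}}^s(\mathscr{C}_{\Omega})$.
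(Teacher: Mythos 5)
Your proof is correct and follows essentially the same route as the paper: reduce \eqref{eq:traceineq} to \eqref{poinc} and then show $\|E_s[\varphi(\cdot,0)]\|_{\mathcal{X}_{\Sigma_{\mathcal{D}}}^s(\mathscr{C}_{\Omega})}\le\|\varphi\|_{\mathcal{X}_{\Sigma_{\mathcal{D}}}^s(\mathscr{C}_{\Omega})}$ by the Pythagorean expansion, the cross term vanishing because the difference $\varphi-E_s[\varphi(\cdot,0)]$ has zero trace (the paper phrases this as $\int_\Omega(-\Delta)^s(\varphi(x,0))(\varphi(x,0)-\varphi(x,0))\,dx=0$, which is exactly your orthogonality via Dirichlet's principle). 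Your explicit remark about absorbing $\kappa_s$ into the constant is a harmless refinement of what the paper leaves implicit.
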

\begin{proof}
Thanks to \eqref{poinc}, it is enough to prove that $\displaystyle\|E_s[\varphi(\cdot,0)]\|_{\mathcal{X}_{\Sigma_{\mathcal{D}}}^s(\mathscr{C}_{\Omega})}\leq\|\varphi \|_{\mathcal{X}_{\Sigma_{\mathcal{D}}}^s(\mathscr{C}_{\Omega})}$. This inequality is satisfied since, arguing as in \cite{BrCdPS}, we find
\begin{align*}
\|\varphi\|_{\mathcal{X}_{\Sigma_{\mathcal{D}}}^s(\mathscr{C}_{\Omega})}^2&:=\kappa_s\int_{\mathscr{C}_{\Omega}}\mkern-5mu y^{1-2s} |\nabla \varphi|^2dxdy\\
&=\kappa_s\int_{\mathscr{C}_{\Omega}}y^{1-2s} |\nabla \left(E_s[\varphi(x,0)]+\varphi (x,y) -E_s[\varphi(x,0)]\right)|^2dxdy\\
&=\|E_s[\varphi(x,0)]\|^2_{\mathcal{X}_{\Sigma_{\mathcal{D}}}^s(\mathscr{C}_{\Omega})}+ \|\varphi (x,y) -E(\varphi(x,0))\|^2_{\mathcal{X}_{\Sigma_{\mathcal{D}}}^s(\mathscr{C}_{\Omega})}\\
&\ +2\kappa_s\int_{\mathscr{C}_{\Omega}}y^{1-2s} \langle\nabla E_s[\varphi(x,0)], \nabla(\varphi (x,y) -E_s[\varphi(x,0)])\rangle dxdy\\
&=\|E_s[\varphi(x,0)]\|^2_{\mathcal{X}_{\Sigma_{\mathcal{D}}}^s(\mathscr{C}_{\Omega})}+
\|\varphi (x,y) -E_s[\varphi(x,0)]\|^2_{\mathcal{X}_{\Sigma_{\mathcal{D}}}^s(\mathscr{C}_{\Omega})}
\\
&\ +2\int_{\Omega}(-\Delta)^s (\varphi(x,0))(\varphi(x,0)-\varphi(x,0))dx
\\[1.5 ex]
&=\|E_s[\varphi(x,0)]\|^2_{\mathcal{X}_{\Sigma_{\mathcal{D}}}^s(\mathscr{C}_{\Omega})}+ \|\varphi (x,y) -E_s[\varphi(x,0)]\|^2_{\mathcal{X}_{\Sigma_{\mathcal{D}}}^s(\mathscr{C}_{\Omega})}.
\end{align*}
\end{proof}

\section{H\"older Regularity}
The principal result we prove in this Section is Theorem \ref{holder_result}, which deals with the H\"older regularity of the solution
to problem \eqref{problema1}. First we introduce the notation that we will follow along this Section.\newline
{\bf Notation.}
Given an open bounded set $\Omega$, $x\in\overline{\Omega}\subset\mathbb{R}^{N}$ and
$X\in\overline{\mathscr{C}}_{\Omega}\subset\mathbb{R}_+^{N+1}$, we define
\begin{itemize}
\item[--] $\Omega(x,\rho)=\Omega\cap B_{\rho}(x)$,
\item[--] $\mathscr{C}_{\Omega}(X,\rho)=\mathscr{C}_{\Omega}\cap B_{\rho}(X)$,
\end{itemize}
Given $u(x) \in H_{\Sigma_{\mathcal{D}}}^s(\Omega)$ and $U(X) \in \mathcal{X}_{\Sigma_{\mathcal{D}}}^s(\mathscr{C}_{\Omega})$,
let us also define
\begin{itemize}
\item[--] $A_+(k)=\{x\in\Omega: u(x)>k\}$,
\item[--] $A_+^*(k)=\{X\in\mathscr{C}_{\Omega}: U(X)>k\}$,
\item[--] $A_+(k,\rho)=A_+(k)\cap\Omega(x,\rho)$
\item[--] $A_+^*(k,\rho)=A_+^*(k)\cap\mathscr{C}_{\Omega}(X,\rho)$,
\item[--] $\{\cdot\}^k=\min(\cdot,k)$.
\item[--] $\{\cdot\}_k=\max(\cdot,k)$.
\end{itemize}
In a similar way we may define the sets $A_{-}(k)$, $A_{-}^{*}(k)$, $A_{-}(k,\rho)$ and $A_{-}^{*}(k,\rho)$ replacing $>$ with $<$ in the latter definitions. We denote by
\begin{itemize}
\item[--] $|A|_{\omega}$ the measure induced by a weight $\omega$ of the set $A$.
\item[--] $|A|_{y^{1-2s}}$ the measure induced by the weight $y^{1-2s}$ of the set $A$.
\item[--] $|A|$ the usual Lebesgue measure of the set $A$.
\end{itemize}

\subsection*{On the regularity of $\Omega$}

Let us recall that $\Omega $ is assumed,  in all the paper, to be Lipschitz and consequently also $\mathcal{C}_{\Omega}$ turns out to have the same regularity.
In particular, among others, we use the following properties.
There exists $\zeta\in (0,1)$ such that
for any $z \in \overline{\Omega}$ and any $\rho>0$
\begin{equation}\label{ineq:zeta}
|\mathscr{C}_{\Omega}(Z,\rho)| \geq\zeta|B_{\rho}(Z)|.
\end{equation}
Moreover also the weighted counterpart is true, i.e.
there exists $\zeta_s\in (0,1)$ such that
for any $z \in \overline{\Omega}$ and any $\rho>0$
\begin{equation}\label{ineq:zetaw}
|\mathscr{C}_{\Omega}(Z,\rho)|_{y^{1-2s}} \geq\zeta_s|B_{\rho}(Z)|_{y^{1-2s}}.
\end{equation}
Consequently $\exists \lambda>0 $ such that
\begin{equation}\label{hyp:thcota}
|A_+^*(k,r)|_{y^{1-2s}}\leq\lambda|\mathscr{C}_{\Omega(z,R)}(Z,r)|_{y^{1-2s}}.
\end{equation}

\bigskip

It is worth to observe that all the results we prove in this paper might be proved for a larger class of open sets $\Omega$. Indeed following \cite{S}, this kind of results is true for the so called $\frac12$--admissible domains. Here we decided to not deal with such domains for brevity and in order to not  make the proofs much heavier.

\bigskip

Now we are ready to start with the statement and the proofs of several technical results.

\bigskip

Let $z\in\overline{\Omega}$ and $R>0$ and let $u$ be  a solution  to problem \eqref{problema1}: we write $u(x)=v(x)+w(x)$ for every $x\in \Omega(z,R)$, where the function $v(x)$ satisfies
\begin{equation}\label{nonhomo_abajo}
\begin{cases}
(-\Delta)^sv=f  &  \mbox{ in  } \Omega(z,R),  \\
\mkern+50mu v=0            &  \mbox{ on } \widetilde{\Sigma}_{\mathcal{D},R}:=\partial\Omega(z,R)\backslash{\Sigma}_{\mathcal{N}}, \\
\mkern+35mu\displaystyle \frac{\partial v }{ \partial \nu} =0 & \mbox{ on } \widetilde{\Sigma}_{\mathcal{N},R}:=\partial\Omega(z,R)\cap\Sigma_{\mathcal{N}},
\end{cases}
\end{equation}
and the function $w(x)$ is such that,
\begin{equation}\label{homo_abajo}
\begin{cases}
(-\Delta)^sw=0  &  \mbox{ in  } \Omega(z,R),  \\
\mkern+50mu w=0  &  \mbox{ on  }  \Sigma_{\mathcal{D},R}:=\Sigma_{\mathcal{D}}\cap B_R(z),  \\
\mkern+36mu\displaystyle \frac{\partial w}{\partial \nu}=0
&  \mbox{ on  }   \Sigma_{\mathcal{N},R}:=\Sigma_{\mathcal{N}}\cap B_R(z),
\end{cases}
\end{equation}
Using the extension technique we can write $v(x)=V(x,0)$ with $V(x,y)$ solves the extended problem
\begin{equation}\label{nonhomo_arriba}
\begin{cases}
     -\text{div }(y^{1-2s}\nabla V)=0  & \mbox{  in } \mathscr{C}_{\Omega(z,R)}, \\
\mkern+88mu B(V)=0   & \mbox{ on } \partial_L\mathscr{C}_{\Omega(z,R)}, \\
\mkern+99mu\displaystyle \frac{\partial V}{\partial \nu^s}=f  & \mbox{  on } \Omega(z,R)\times\{y=0\},
        \end{cases}
\end{equation}
where $\displaystyle B(V)=V\rchi_{\widetilde{\Sigma}_{\mathcal{D},R}^*}\mkern-10mu+\frac{\partial V}{\partial \nu}\rchi_{\widetilde{\Sigma}_{\mathcal{D},R}^*}$,
with $\widetilde{\Sigma}_{\mathcal{D},R}^*\!=\!\widetilde{\Sigma}_{\mathcal{D},R}\!\times\![0,\infty)$ and $\widetilde{\Sigma}_{\mathcal{N},R}^*=\widetilde{\Sigma}_{\mathcal{N},R}\!\times\![0,\infty)$.

In the same way, we write $w(x)=W(x,0)$, with $W(x,y)$ satisfying the extended problem
\begin{equation}\label{homo}
\begin{cases}
-\text{div}(y^{1-2s}\nabla W)=0  &  \mbox{ in  } \mathscr{C}_{\Omega(z,R)},\\
\mkern+81mu B(W)=0& \mbox{ on } \Sigma_{\mathcal{D},R}^*\cup\Sigma_{\mathcal{N},R}^*,\\
\mkern+95mu\displaystyle  \frac{\partial W}{\partial\nu^s}=0  &  \mbox{  on  } \Omega(z,R)\times\{y=0\},
\end{cases}
\end{equation}
where $\displaystyle B(V)=V\rchi_{\Sigma_{\mathcal{D},R}^*}\mkern-10mu+\frac{\partial V}{\partial \nu}\rchi_{\Sigma_{\mathcal{D},R}^*}$,
with $\Sigma_{\mathcal{D},R}^*\!=\!{\Sigma}_{\mathcal{D},R}\!\times\![0,\infty)$ and $\Sigma_{\mathcal{N},R}^*={\Sigma}_{\mathcal{N},R}\!\times\![0,\infty)$.

\medskip

Let us observe that we have the following situations:

\begin{enumerate}[(i)]
\item If $z\in\Omega$, there exists $R>0$ such that $\widetilde{\Sigma}_{\mathcal{D},R}=\partial\Omega(z,R)$ and $\Sigma_{\mathcal{D},R}=\Sigma_{\mathcal{N},R}=\emptyset$. Then, $v\in H_0^s(\Omega(z,R))$ and it is solution to a Dirichlet problem. Moreover,  $w$ is an $s$--harmonic function,  i.e. its extension  $W=E_s[w] \in\mathcal{X}^s(\mathscr{C}_{\Omega(z,R)}) $   and it satisfies
   \begin{equation}\label{soldebil}
\int_{\mathscr{C}_{\Omega(z,R)}} y^{1-2s} \langle\nabla W,\nabla\Phi\rangle  \  dxdy=0, \qquad \forall  \Phi\in \mathcal{X}_0^s(\mathscr{C}_{\Omega(z,R)}).
\end{equation}
\item If $z\in\Sigma_{\mathcal{D}}\backslash\Gamma$, there exists $R>0$ such that $\widetilde{\Sigma}_{\mathcal{D},R}=\partial\Omega(z,R)$ and $\Sigma_{\mathcal{N},R}=\emptyset$, then, $v\in H_0^s(\Omega(z,R))$ and it is a solution to a Dirichlet problem while  $W\in \mathcal{X}_{\Sigma_{\mathcal{D},R}}^s(\mathscr{C}_{\Omega(z,R)})$ and, also in this case, it satisfies \eqref{soldebil}.
% holds $\forall\Phi\in \mathcal{X}_0^s(\mathscr{C}_{\Omega(z,R)})$.

\item If $z\in\Sigma_{\mathcal{N}}$, there exists $R>0$ such that $\Sigma_{\mathcal{D},R}=\emptyset$. Then, the function $v\in H_{\widetilde{\Sigma}_{\mathcal{D},R}}^s(\Omega(z,R))$ and it is a solution to the mixed problem \eqref{nonhomo_abajo}; moreover    $W$ belongs to $\mathcal{X}^s(\mathscr{C}_{\Omega(z,R)})$ and \eqref{soldebil} holds $\forall\Phi\in \mathcal{X}^s(\mathscr{C}_{\Omega(z,R)})$ vanishing on $\partial_L\mathscr{C}_{\Omega(z,R)}\backslash\Sigma_{\mathcal{N},R}^*$.

\item Finally, if $z\in\Gamma$, the sets $\widetilde{\Sigma}_{\mathcal{D},R}$, $\widetilde{\Sigma}_{\mathcal{N},R}$, $\Sigma_{\mathcal{D},R}$ and $\Sigma_{\mathcal{D},R}$ are nonempty for all $R>0$. Then, the function $v\in H_{\widetilde{\Sigma}_{\mathcal{D},R}}^s(\Omega(z,R))$ and it is a solution to the mixed problem \eqref{nonhomo_abajo}; as far as $w$ is concerned,  $W\in \mathcal{X}_{\Sigma_{\mathcal{D},R}}^s(\mathscr{C}_{\Omega(z,R)})$ and if fulfills  \eqref{soldebil} holds for any $\Phi\in \mathcal{X}^s(\mathscr{C}_{\Omega(z,R)})$ vanishing on $\partial_L\mathscr{C}_{\Omega(z,R)}\backslash\Sigma_{\mathcal{N},R}^*$.
\end{enumerate}

We also define the following sets that will be useful in the sequel:
\begin{itemize}
\item $\mathscr{C}_{\Omega(z,R)}^\circ=\overline{\mathscr{C}}_{\Omega(z,R)}\backslash \{ (x,y)\in \mathscr{C}_{\Omega(z,R)}: x\in\partial B_R(z)\},$
\item $\partial_0\mathscr{C}_{\Omega(z,R)}=\partial_L\mathscr{C}_{\Omega(z,R)}\backslash\Sigma_{\mathcal{N},R}^*$.
\item $\partial_{B}\mathscr{C}_{\Omega(z,R)}=\partial_L\mathscr{C}_{\Omega(z,R)}\backslash\left(\Sigma_{\mathcal{D},R}^*\cup\Sigma_{\mathcal{N},R}^*\right)$.
\end{itemize}

We continue by stating the definitions and results needed in what follows. The first definition is based on \cite[Definition 2.1]{S}.

\begin{definition}\label{def:truncate}
Given any $z_0 \in \overline{\Omega}$ and  $Z\in\!\mathscr{C}_{\Omega(z_0,R)}^\circ$, let $\mathcal{K}^+(Z)\,($resp.\,$\mathcal{K}^-(Z))$ be the set of values $k\!\in\!\mathbb{R}$  such that there exists a number $\widetilde{\rho}(Z)>0$ satisfying $\{U\}^k\eta\in \mathcal{X}_{\partial_0\mathscr{C}_{\Omega(z_0,R)}}^s(\mathscr{C}_{\Omega(z_0,R)})$ $($resp. $\{U\}_k\,\eta\in \mathcal{X}_{\partial_0\mathscr{C}_{\Omega(z_0,R)}}^s(\mathscr{C}_{\Omega(z_0,R)}))$ for any $U\in \mathcal{X}_{\Sigma_{\mathcal{D},R}}^s(\mathscr{C}_{\Omega(z,R)})$ and any function $\eta\in C^{\infty}(\mathbb{R}_+^{N+1})$ such that $supp(\eta)\subset B_{\widetilde{\rho}(Z)}(Z)$.
\end{definition}
\begin{remark}
It is worth to  observe that:
\begin{itemize}
\item[--] If $Z\in\! \Sigma_{\mathcal{D},R}^*$
%\setminus \partial_{B}\mathscr{C}_{\Omega(z,R)}$,
then $\mathcal{K}^+(Z)\!=\![0,\infty)$, $\mathcal{K}^-(Z)\!=\!(-\infty,0]$ and $\widetilde{\rho}(Z)=dist(Z,\partial_{B}\mathscr{C}_{\Omega(z,R)})$.
\item[--] If $Z \in \mathscr{C}_{\Omega(z,R)}^\circ \backslash\Sigma_{\mathcal{D},R}^*$, then $\mathcal{K}^+(Z)=\mathcal{K}^-(Z)=(-\infty,\infty)$, and in this case \break $\widetilde{\rho}(Z)=dist(Z,\partial_0\mathscr{C}_{\Omega(z,R)})$.
\item[--] Thanks to the construction of the   cylinder, it is immediate to notice that the number $\widetilde{\rho}(Z)>0$ does not depend on the $y$ variable.
\end{itemize}
\end{remark}

The control of the oscillations of solutions of elliptic problems is usually carried out through integral estimates that mainly rely on a Sobolev-type inequality.
Since the extension function solves a degenerate elliptic problem involving a weight (namely, $y^{1-2s}$) that belongs to the Muckenhoupt class $A_2$, it is necessary to establish a Sobolev-type inequality dealing with such a type of singular weights. To this aim, we recall the following definition.

\begin{definition}%\label{muck}
Given an open subset $D\subset \mathbb{R}^N$ and a   function $\omega: D \to \mathbb{R}^+$, we say that  $\omega$  belongs to the Muckenhoupt class $   A_p$, with $p>1$ if there exists a constant $C>0$ such that
$$
\sup_{B\subset D} \left( \frac1{|B|} \int_B \omega^p\right) \left( \frac1{|B|} \int_B \omega^{1-p}\right)^{p-1} \leq C\,.
$$
\end{definition}

Now we can recall the following result.

\begin{theorem}[\cite{FKS}, Theorem 1.3 and Theorem 1.6]\label{wpoincare}
Let $ D$ be an open bounded Lipschitz set in $\mathbb{R}^N$ and consider $1<p<\infty$ and a weight $\omega\in A_p$.

Then, there exist a positive constant $C( D)$ and $\delta>0$ such that for all $u\in H_0^1( D,\omega)$ and any $1\leq \sigma \leq\frac{N}{N-1}+\delta$ we have
\begin{equation}\label{poincareFabes}
\|u\|_{L^{\sigma p}( D,\omega dx)}\leq C( D)\|\nabla u\|_{L^p( D,\omega dx)},
\end{equation}
where $C( D)=c_{\omega} \mbox{diam}( D)| D|_{\omega}^{\frac{1}{p}\left(\frac{1}{\sigma}-1\right)}$ for a positive constant $c_\omega$ depending on $N,\ p$ and $\omega$.

Moreover for any $x_0\in\partial D$   there exist a positive constant $C=C(B_{\rho}(x_0))$ and $\delta>0$ such that $1\leq \sigma \leq\frac{N}{N-1}+\delta$ and any $u\in H^1( {{ D(x_0,\rho)}},\omega)$ vanishing on $\partial D\cap B_{\rho}(x_0)$ we have
\begin{equation*}
\|u\|_{L^{\sigma p}( D(x_0,\rho),\omega dx)}\leq C(B_{\rho})\|\nabla u\|_{L^p(( D(x_0,\rho),\omega dx)},
\end{equation*}
where $C(B_{\rho})= c_{  \omega}\rho^{\frac{N}{p}\left(\frac{1}{\sigma}-1\right) +1}$ for a positive constant $c_\omega$ depending on $\omega$, $N,\ p$ and $\xi$.
\end{theorem}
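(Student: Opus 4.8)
Both inequalities are weighted Sobolev--Poincar\'e estimates for the Muckenhoupt class $A_p$, and the natural route is to reduce them to a weighted boundedness property of the Riesz potential of order one. The plan is: (i) for $u\in C_0^1(D)$ (resp.\ $u\in C^1(\overline{D(x_0,\rho)})$ vanishing on $\partial D\cap B_\rho(x_0)$, suitably extended by zero) record the classical pointwise bound $|u(x)|\le c_N\,I_1(|\nabla u|)(x)$, where $I_1 g(x)=\int_{D}|x-y|^{1-N}g(y)\,dy$; (ii) prove that for $\omega\in A_p$ the operator $I_1$ maps $L^p(D,\omega\,dx)$ into $L^{\sigma p}(D,\omega\,dx)$ for every $1\le\sigma\le \frac{N}{N-1}+\delta$, with a constant of the stated form; (iii) combine (i) and (ii), the case $\sigma=1$ being precisely the weighted Poincar\'e inequality and $\sigma>1$ the weighted Sobolev improvement. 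Since everything takes place on a bounded set we do not need the sharp Sobolev exponent $\frac{N}{N-1}$, only a fixed gain above $1$; this is exactly what the self-improvement of the $A_p$ condition provides.

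\textbf{The interior estimate.} Step (i) is standard and local (using that $D$ is bounded Lipschitz, hence an extension/reflection is available where needed). For step (ii), I would argue in two ways that can be interchanged. First, one may invoke the Muckenhoupt--Wheeden theory of weighted norm inequalities for fractional integrals: $\omega\in A_p$ is doubling and, by the reverse H\"older inequality, $\omega\in A_{p-\varepsilon_0}$ for some $\varepsilon_0>0$; this room is what upgrades the trivial bound (valid for $\sigma=1$ after a Hedberg-type splitting $I_1 g\lesssim r\,M_\omega g+\|g\|_{L^p(\omega)}\,(\text{tail})$, controlled via the $A_p$-boundedness of the maximal operator $M_\omega$) into the supercritical bound for $\sigma\le \frac{N}{N-1}+\delta$, with $\delta=\delta(N,p,\omega)>0$ dictated by the reverse H\"older exponent. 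Secondly, and equivalently, one can run the FKS iteration directly on $\|I_1 g\|_{L^q(\omega)}$. In either case the explicit constant $c_\omega\,\mathrm{diam}(D)\,|D|_\omega^{\frac1p(\frac1\sigma-1)}$ is obtained by tracking the homogeneity of the kernel $|x-y|^{1-N}$ under dilations (the factor $\mathrm{diam}(D)$, since $\nabla$ scales by one power of the dilation parameter) together with H\"older's inequality on the bounded set $D$ to move between exponents $p$ and $\sigma p$ against the measure $\omega\,dx$ (the factor $|D|_\omega^{\frac1p(\frac1\sigma-1)}$, with $\frac1\sigma-1\le 0$).

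\textbf{The boundary estimate.} For $x_0\in\partial D$ and $u\in H^1(D(x_0,\rho),\omega)$ vanishing on $\partial D\cap B_\rho(x_0)$, I would extend $u$ by zero to $B_\rho(x_0)$. Because $D$ is Lipschitz, near $x_0$ it lies on one side of a Lipschitz graph, and since the trace of $u$ on $\partial D\cap B_\rho(x_0)$ is zero, the zero-extension has no singular gradient along $\partial D$ and belongs to $H^1\big(B_\rho(x_0),\omega\big)$ with $\|\nabla(\text{ext})\|_{L^p(B_\rho,\omega)}=\|\nabla u\|_{L^p(D(x_0,\rho),\omega)}$; the part of $\partial D(x_0,\rho)$ lying on $\partial B_\rho(x_0)$ plays no role because in the potential representation one only integrates $|\nabla u|$ over the ball. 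Applying the interior potential estimate of step (ii) on $B_\rho(x_0)$ and using the doubling of $\omega$ to replace $|B_\rho(x_0)|_\omega$ by its pure-scaling surrogate yields the constant $c_\omega\,\rho^{\frac Np(\frac1\sigma-1)+1}$. A routine covering/patching argument then removes any smallness assumption on $\rho$ relative to the Lipschitz charts.

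\textbf{Main obstacle.} The technical heart is step (ii): the weighted boundedness of $I_1$ from $L^p(\omega)$ into $L^{\sigma p}(\omega)$ for a \emph{supercritical} exponent $\sigma p>p$, together with the identification of the admissible range $1\le\sigma\le\frac{N}{N-1}+\delta$ and the explicit form of the constant. This is precisely where the self-improvement of the Muckenhoupt condition (reverse H\"older / Gehring-type integrability gain for $A_p$ weights) is indispensable, and where one must be careful to keep the dependence on $\mathrm{diam}(D)$, on $|D|_\omega$ (resp.\ on $\rho$), and on the $A_p$ constant of $\omega$ completely explicit, since uniformity of these constants as the underlying ball shrinks is what the later De Giorgi--Stampacchia iteration will exploit.
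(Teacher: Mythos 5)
The paper itself gives no proof of this statement: it is imported verbatim from Fabes--Kenig--Serapioni, so the only meaningful comparison is with the original FKS argument, and your sketch follows essentially that route --- the pointwise bound $|u|\le c_N I_1(|\nabla u|)$, weighted $L^p(\omega)\to L^{\sigma p}(\omega)$ estimates for the Riesz potential $I_1$, the admissible gain $1\le\sigma\le\frac{N}{N-1}+\delta$ coming from the reverse H\"older self-improvement of the $A_p$ condition, and the explicit constant recovered by scaling and H\"older on the bounded set. So in substance your strategy is the same as that of the cited source.

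One step in your boundary estimate is too quick. After extending $u$ by zero across $\partial D\cap B_\rho(x_0)$, the extension is \emph{not} compactly supported in $B_\rho(x_0)$ (it need not vanish on $\partial B_\rho(x_0)\cap D$), so the pointwise bound $|u(x)|\le c\,I_1\bigl(|\nabla u|\rchi_{B_\rho(x_0)}\bigr)(x)$ does not follow directly; the correct starting point is the representation with the mean subtracted, $|u(x)-u_{B_\rho}|\le c_N I_1(|\nabla u|)(x)$, after which the average $u_{B_\rho}$ must be absorbed using that the extension vanishes on $B_\rho(x_0)\setminus D$, a set whose measure (and $\omega$-measure, by doubling) is comparable to that of $B_\rho(x_0)$ thanks to the Lipschitz character of $\partial D$. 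This is precisely where the geometric hypothesis enters (it is the analogue of the solid-angle condition $\Pi$ that the paper borrows from Stampacchia in the proof of Theorem \ref{cor:reg}), and omitting it is the only genuine lacuna in your outline; with that observation the argument closes and yields the stated constant $c_\omega\rho^{\frac{N}{p}\left(\frac{1}{\sigma}-1\right)+1}$.
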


We want to apply such a Theorem to domains $D\subsetneq\mathscr{C}_{\Omega}\subset\mathbb{R}_{+}^{N+1}$ so that the correspondent exponent $\sigma$ relies to satisfy
$1\leq \sigma \leq\frac{N+1}{N}$.

 \medskip

As far as the weight is concerned, we set $\omega = y^{1-2s}$, that, actually,  belongs to $A_2$. Let us observe that, according to \cite{FKS}, there exists $\varepsilon_0>0$ such that   \eqref{poincareFabes} holds true with $p\geq2-\varepsilon_0$.

 \medskip

%Let us also observe that by definition $\partial_L\mathscr{C}_{\Omega}$ and $\partial\Omega$ have the same regularity. In particular notice that
%
%%On the other hand, it is clear that the boundary of the extension cylinder $\mathscr{C}_{\Omega}$ possesses, by its   definition, the same regularity as the boundary of $\Omega$, therefore, as we are considering $\Omega$ to be a smooth bounded domain, $\partial_L\mathscr{C}_{\Omega}$ satisfies the hypotheses above in Theorem \ref{wpoincare}. In fact, assuming that $\partial\Omega$ is a $\mathcal{C}^k$ manifold for some $k\geq1$,
%\begin{equation}\label{prev}
%\lim_{\rho\to0}\frac{|B_{\rho}(z)\backslash \Omega(z,\rho)|}{|B_{\rho}(z)|}=c \ , \qquad \forall z\in\partial\Omega\,,
%\end{equation}
% %More generally, we can consider domains $\Omega$ such that $\partial\Omega$ is a Lipstchiz manifold. In this case \eqref{prev} remains true replacing $\frac12$ with certain constant $0<c<1$.
% for some $0<c<1$. \footnote{ esta ultima observacion me parece que no se utiliza nunca: la quitamos? }

 \medskip

As an immediate consequence of Theorem \ref{wpoincare} we obtain the following result.
\begin{lemma}\label{poincarelemma}
Let $Z\in\Sigma_{\mathcal{D}}^*$ and $p\geq2-\varepsilon_0$ for some $\varepsilon_0>0$. Then, there exists $\overline{\rho}>0$, such that for any $\rho<\overline{\rho}$ and any $U\in \mathcal{X}_{\Sigma_{\mathcal{D}}}^s(\mathscr{C}_{\Omega})$ we have
\begin{equation}\label{poincare}
\|U\|_{L^{\sigma p}(\mathscr{C}_{\Omega}(Z,\rho),y^{1-2s}dxdy)}\leq c_s\rho|B_{\rho}|_{y^{1-2s}}^{\frac{1}{p}\left(\frac{1}{\sigma}-1\right)}\|\nabla U\|_{L^p(\mathscr{C}_{\Omega}(Z,\rho),y^{1-2s}dxdy)},
\end{equation}
with $1\leq \sigma \leq\frac{N+1}{N}+\delta$ for some $\delta>0$ and $c_s$ depending on $N$, $p$ and the weight $y^{1-2s}$.
\end{lemma}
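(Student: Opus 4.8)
\emph{Proof strategy.} The plan is to obtain \eqref{poincare} as a direct application of the weighted Sobolev--Poincar\'e inequality of Theorem \ref{wpoincare} on the cylinder. The ambient space is now $\mathbb{R}^{N+1}$, so the exponent $\frac{N}{N-1}$ of Theorem \ref{wpoincare}, computed for ambient dimension $N+1$, becomes $\frac{N+1}{N}$, which accounts for the admissible range $1\le\sigma\le\frac{N+1}{N}+\delta$; the weight is $\omega=y^{1-2s}$, which is an $A_2$--weight and, as recalled right after Theorem \ref{wpoincare}, belongs to $A_p$ for every $p\ge 2-\varepsilon_0$; and $\mathscr{C}_{\Omega}\subset\mathbb{R}^{N+1}$ is bounded and Lipschitz since $\Omega$ is. The crucial fact to exploit is that every $U\in\mathcal{X}_{\Sigma_{\mathcal{D}}}^s(\mathscr{C}_{\Omega})$ vanishes, in the trace sense, on $\Sigma_{\mathcal{D}}^*$, as this space is the closure of $\mathcal{C}_0^\infty((\Omega\cup\Sigma_{\mathcal{N}})\times[0,\infty))$.

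First I would fix $Z\in\Sigma_{\mathcal{D}}^*$ and take $\overline{\rho}:=\mathrm{dist}\big(Z,\overline{\Sigma_{\mathcal{N}}^*}\big)>0$, so that for every $\rho<\overline{\rho}$ the ball $B_\rho(Z)$ stays away from both $\Gamma^*$ and $\Sigma_{\mathcal{N}}^*$; consequently the only piece of the lateral boundary of $\mathscr{C}_{\Omega}$ met by $B_\rho(Z)$ is contained in $\Sigma_{\mathcal{D}}^*$, where $U$ vanishes. When $Z\in\{y=0\}$ the ball also meets the bottom face $\Omega\times\{0\}$, on which $U$ need not vanish, so in order to place ourselves exactly under the boundary hypothesis of Theorem \ref{wpoincare} I would perform the even reflection $\widetilde U(x,y):=U(x,|y|)$ across $\{y=0\}$: the reflected weight $|y|^{1-2s}$ is again an $A_2$--weight on $\mathbb{R}^{N+1}$ (since $-1<1-2s<0$), the bottom face becomes interior, and $\widetilde U$ vanishes on the whole portion of the boundary of the reflected cylinder that lies in $B_\rho(Z)$. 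Applying the boundary estimate of Theorem \ref{wpoincare} to $\widetilde U$ and restricting back to $\{y>0\}$ — which only multiplies the two weighted norms by fixed powers of $2$ — yields an inequality of the form \eqref{poincare} with constant $c_\omega\,\rho\,|\mathscr{C}_{\Omega}(Z,\rho)|_{y^{1-2s}}^{\frac1p(\frac1\sigma-1)}$.

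It then only remains to replace $|\mathscr{C}_{\Omega}(Z,\rho)|_{y^{1-2s}}$ by $|B_\rho(Z)|_{y^{1-2s}}$ in the constant: since the exponent $\frac1p(\frac1\sigma-1)$ is nonpositive and, by \eqref{ineq:zetaw}, $|\mathscr{C}_{\Omega}(Z,\rho)|_{y^{1-2s}}\ge\zeta_s|B_\rho(Z)|_{y^{1-2s}}$, one gets $|\mathscr{C}_{\Omega}(Z,\rho)|_{y^{1-2s}}^{\frac1p(\frac1\sigma-1)}\le\zeta_s^{\frac1p(\frac1\sigma-1)}|B_\rho(Z)|_{y^{1-2s}}^{\frac1p(\frac1\sigma-1)}$, and the extra constant is absorbed into the final $c_s=c_s(N,p,\omega)$. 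I expect the only genuine points of care to be the geometric localisation near $\Gamma^*$ together with the treatment of the bottom face $\Omega\times\{0\}$ — handled by the even reflection above — and the bookkeeping that brings the Fabes--Kenig--Serapioni constant into the precise weighted-ball form of \eqref{poincare}; the membership $y^{1-2s}\in A_p$ for $p\ge 2-\varepsilon_0$ is quoted directly from \cite{FKS}.
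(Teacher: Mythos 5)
Your route is the same as the paper's: the paper offers no separate argument for this lemma, presenting it as an immediate consequence of Theorem \ref{wpoincare} applied with $D\subset\mathscr{C}_{\Omega}\subset\mathbb{R}^{N+1}$ (whence $\sigma\leq\frac{N+1}{N}+\delta$) and $\omega=y^{1-2s}\in A_2$, which is exactly your plan. Your two additional touches are sound and in fact fill in details the paper glosses over: the even reflection $\widetilde U(x,y)=U(x,|y|)$ correctly disposes of the bottom face $\Omega\times\{0\}$, where $U$ need not vanish but which would otherwise be part of $\partial D\cap B_\rho(Z)$ in the boundary version of Theorem \ref{wpoincare}; and the passage from $|\mathscr{C}_{\Omega}(Z,\rho)|_{y^{1-2s}}$ to $|B_\rho|_{y^{1-2s}}$ via \eqref{ineq:zetaw} and the nonpositivity of the exponent is the right bookkeeping to reach the stated constant.

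The one point to flag is your choice $\overline{\rho}=\mathrm{dist}\bigl(Z,\overline{\Sigma_{\mathcal{N}}^*}\bigr)$: since $\Sigma_{\mathcal{D}}$ is closed, $\Gamma^*\subset\Sigma_{\mathcal{D}}^*$, and for $Z\in\Gamma^*$ this distance is zero, so your argument as written does not produce a positive $\overline{\rho}$ at interface points, which the statement formally includes. For such $Z$ the ball $B_\rho(Z)$ always meets $\Sigma_{\mathcal{N}}^*$, so one cannot invoke the boundary version of Theorem \ref{wpoincare} verbatim; what is needed there is the weaker requirement that $U$ vanish only on a boundary portion of positive angular density inside $B_\rho(Z)$. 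This is precisely what the paper supplies later through the families $\mathcal{F}(\beta_s,A)$ and the cone/solid-angle estimate ($\Pi(X_0,\mathcal{D}_\rho(Z),\cdot)\geq\varphi>0$) used in the proof of Theorem \ref{cor:reg}; the same caveat applies to the paper's own ``immediate consequence'' phrasing, but you should either restrict your $\overline{\rho}$-construction to $Z\in\Sigma_{\mathcal{D}}^*\setminus\Gamma^*$ or add this density-type argument to cover $Z\in\Gamma^*$.
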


Although Theorem \ref{holder_result} has been stated for Lipschitz domains, following  \cite{S}, we might prove  most of the results in this section under more general hypotheses on $\partial\Omega$. Then, we relax the smoothness hypotheses on $\partial\Omega$ and establish inequality \eqref{poincare} for functions in $\mathcal{X}_{\Sigma_{\mathcal{D},R}}^s(\mathscr{C}_{\Omega(z,R)})$ and, given some point $Z\in\mathscr{C}_{\Omega(z,R)}^\circ \backslash\Sigma_{\mathcal{D},R}^*$, also for functions in $H^1(\mathscr{C}_{\Omega}(Z,\rho),y^{1-2s}dxdy)$ vanishing on suitable sets.

\begin{definition}
Given $p\geq2-\varepsilon_0$ for some $\varepsilon_0\in (0,1)$ and an open bounded set $A$, we define $\mathcal{F}(\beta_s,A)$ as the family of sets $B\subset\overline A$ such that, for any $U\in H^1(A,y^{1-2s}dxdy)$ vanishing on $B$,
\begin{equation}\label{sobA}
\|U\|_{L^{\sigma p}(A,y^{1-2s}dxdy)}\leq \beta_{s}\mbox{diam}(A)|A|_{y^{1-2s}}^{\frac{1}{p}\left(\frac{1}{\sigma}-1\right)}\|\nabla U\|_{L^p(A,y^{1-2s}dxdy)},
\end{equation}
for some $\beta_s>0$ depending on $N$, $p$ and the weight $y^{1-2s}$, and  $1\leq \sigma \leq\frac{N+1}{N}+\delta$ for some $\delta>0$.
\end{definition}

With this scheme in mind, we focus first on finding   bounds for solutions to \eqref{nonhomo_abajo} in terms of the data of the problem.

%measure of the domain $\Omega(z,R)$, the datum $f$ and a positive constant $C=C(N,s,|\Sigma_{\mathcal{D}}|)$. This is done adapting to our framework \cite[Theorem B.2]{KS}.

%Next, we establish bounds on the oscillation of functions $w(x)$ satisfying \eqref{homo_abajo}. This is done using arguments similar to those of \cite[Theorem 8.5]{S} and \cite[Theorem D.5]{KS}.
%
%
%To accomplish this step we work with the extended problem \eqref{homo}. Gathering together these results, we will be able to prove the local H\"older regularity of solutions to problem \eqref{problema1}.\footnote{esta ultima frase, hace falta?}
%

\begin{theorem}\label{th:bound_nonhomo}
Let $u$ be a solution to \eqref{problema1} with $f\in L^{p}(\Omega)$, $p>\frac{N}{2s}$. Then, there exists a positive constant $C=C(N,s,|\Sigma_{\mathcal{D}}|)$ such that
\begin{equation*}
\|u\|_{L^{\infty}(\Omega)} \leq C\|f\|_{L^{p}(\Omega)} |\Omega|^{\frac{2s}{N}-\frac{1}{p}}.
\end{equation*}
\end{theorem}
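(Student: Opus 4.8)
The plan is to run a Stampacchia-type iteration on super-level sets, working with the $s$-harmonic extension $U = E_s[u] \in \mathcal{X}_{\Sigma_{\mathcal{D}}}^s(\mathscr{C}_{\Omega})$ so that the nonlocal operator becomes the degenerate-elliptic equation $-\mathrm{div}(y^{1-2s}\nabla U)=0$ in $\mathscr{C}_{\Omega}$ with $\partial U/\partial\nu^s = f$ on $\Omega\times\{y=0\}$. First I would test the weak formulation \eqref{def:soldebil} with $\varphi = \{U\}_k - k = (U-k)_+$ for $k\ge 0$; note this is an admissible test function in $\mathcal{X}_{\Sigma_{\mathcal{D}}}^s(\mathscr{C}_{\Omega})$ since $U$ vanishes on $\Sigma_{\mathcal{D}}^*$ and $(U-k)_+$ with $k\ge 0$ still vanishes there. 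This yields
\[
\kappa_s\int_{\mathscr{C}_{\Omega}} y^{1-2s}|\nabla (U-k)_+|^2\,dxdy = \int_{A_+(k)} f(x)\,(u(x)-k)_+\,dx,
\]
where $A_+(k)=\{u>k\}$. Then by the trace inequality of Lemma \ref{lem:traceineq} applied to $\varphi=(U-k)_+$, the left side controls $C_{\mathcal{D}}\big(\int_{A_+(k)}(u-k)_+^{2_s^*}dx\big)^{2/2_s^*}$ from below. The symmetric choice $\varphi = -(-U-k)_+$ (equivalently working with $-u$, which solves the same type of problem with datum $-f$) handles the negative part, so it suffices to bound $(u-k)_+$ and add.

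Next I would estimate the right-hand side by Hölder. Writing $A_+(k)$ for the measure of the super-level set, we have $\int_{A_+(k)} f (u-k)_+ \le \|f\|_{L^p(\Omega)} \|(u-k)_+\|_{L^{2_s^*}(A_+(k))} |A_+(k)|^{1-\frac1p-\frac1{2_s^*}}$. Combining with the trace inequality and dividing through by $\|(u-k)_+\|_{L^{2_s^*}}$ gives
\[
\Big(\int_{A_+(k)} (u-k)_+^{2_s^*}dx\Big)^{1/2_s^*} \le \frac{1}{C_{\mathcal{D}}}\,\|f\|_{L^p(\Omega)}\, |A_+(k)|^{1-\frac1p-\frac1{2_s^*}}.
\]
For $h>k$, since $(u-k)_+ \ge (h-k)$ on $A_+(h)$, the left side is at least $(h-k)|A_+(h)|^{1/2_s^*}$, which produces the classical Stampacchia inequality
\[
|A_+(h)| \le \Big(\frac{\|f\|_{L^p}}{C_{\mathcal{D}}(h-k)}\Big)^{2_s^*} |A_+(k)|^{2_s^*\left(1-\frac1p-\frac1{2_s^*}\right)}.
\]
The exponent on $|A_+(k)|$ is $2_s^*(1-\tfrac1p) - 1$; using $2_s^* = \tfrac{2N}{N-2s}$ and $p > \tfrac{N}{2s}$, a short computation shows this exponent is strictly greater than $1$ (this is precisely where $p>N/2s$ enters). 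I would then invoke the standard Stampacchia lemma (the one used in \cite{S}): if $\phi(h) \le M (h-k)^{-\beta}\phi(k)^{\alpha}$ for all $h>k\ge k_0$ with $\alpha>1$, then $\phi$ vanishes for $h \ge k_0 + d$ with $d^\beta = M\phi(k_0)^{\alpha-1} 2^{\alpha\beta/(\alpha-1)}$. Applying this with $k_0=0$, $\phi(0)=|\Omega|$, gives $u \le C\|f\|_{L^p}/C_{\mathcal{D}} \cdot |\Omega|^{(\alpha-1)/\beta}$ a.e., and chasing the arithmetic with $\beta = 2_s^*$ and $\alpha-1 = 2_s^*(1-\tfrac1p)-2$ yields exactly the exponent $|\Omega|^{\frac{2s}{N}-\frac1p}$ claimed, with the constant absorbing the factor $1/C_{\mathcal{D}} = 1/C_{\mathcal{D}}(N,s,|\Sigma_{\mathcal{D}}|)$.

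The main obstacle is not the iteration itself, which is routine once set up, but rather two bookkeeping points: (i) verifying that $(U-k)_+$ (and the corresponding negative-part truncation) is genuinely admissible as a test function in $\mathcal{X}_{\Sigma_{\mathcal{D}}}^s(\mathscr{C}_{\Omega})$ — this uses that truncation is Lipschitz and preserves the trace vanishing on $\Sigma_{\mathcal{D}}$, together with the fact that $y^{1-2s}\in A_2$ so the weighted Sobolev space behaves well under truncation; and (ii) keeping precise track of the exponents so that the final power of $|\Omega|$ comes out to be exactly $\tfrac{2s}{N}-\tfrac1p$ rather than something merely equivalent. I would carry the computation symbolically, setting $\beta = 2_s^*$ and $\gamma = 2_s^*(1-\tfrac1p)-1$ for the super-level measure exponent, check $\gamma>1 \iff p > \tfrac{N}{2s}$, and then evaluate $(\gamma-1)/\beta = \tfrac{2s}{N}-\tfrac1p$ directly. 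The constant's dependence on $|\Sigma_{\mathcal{D}}|$ is inherited entirely from $C_{\mathcal{D}}$ in Lemma \ref{lem:traceineq}.
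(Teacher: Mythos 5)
Your proposal is correct and follows essentially the same route as the paper: test the extended weak formulation with $(U-k)_+$, apply the trace inequality of Lemma \ref{lem:traceineq} together with H\"older on the level set $A_+(k)$, derive the decay inequality for $|A_+(h)|$ with exponent $2_s^*(1-\tfrac1p)-1>1$ (equivalent to the paper's $b=(1-\tfrac2p+\tfrac{2s}{N})\tfrac{2_s^*}{2}$), and conclude via the Stampacchia-type Lemma \ref{lem:B.1} of \cite{KS}, handling the lower bound symmetrically. The only cosmetic difference is that you apply H\"older directly with $\|f\|_{L^p}$ in a single three-factor step instead of passing through $\|f\|_{L^2(A_+(k))}$ as the paper does, which changes nothing in the exponents or the final constant.
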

In the proof of Theorem \ref{th:bound_nonhomo} we make use of the following technical result.
\begin{lemma}[\cite{KS},  Lemma B.1]\label{lem:B.1}
Let $\varphi(k)$ be a nonnegative and nonincreasing function defined for $k\geq k_0$ such that
\begin{equation*}
\varphi(h) \leq \frac{C_0}{(h-k)^{a}} \varphi^{b}(k), \qquad k<h,
\end{equation*}
where $C_0, a, b$ are positive constants with $b > 1$. Then, $\varphi(k_{0}+d)=0$,
with
$%\begin{equation*}%\label{B.3}
d^{a}= 2^{\frac{ab}{b-1}}C_0 |\varphi(k_{0})|^{b-1} .
$%\end{equation*}
\end{lemma}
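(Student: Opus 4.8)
The statement is the classical De Giorgi--Stampacchia iteration lemma, and the plan is to iterate the hypothesis along a geometric sequence of levels converging to $k_0+d$. If $\varphi(k_0)=0$ the conclusion is immediate (then $d=0$ and $\varphi(k_0+d)=\varphi(k_0)=0$), so assume $\varphi(k_0)>0$, whence $d>0$. Set $r:=2^{-a/(b-1)}\in(0,1)$ and define the increasing sequence $k_n:=k_0+d\,(1-2^{-n})$ for $n\geq 0$, so that $k_0$ is the starting level and $k_n\uparrow k_0+d$.

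First I would apply the hypothesis to the pair $h=k_{n+1}>k=k_n$; since $k_{n+1}-k_n=d\,2^{-(n+1)}$ this gives
\[
\varphi(k_{n+1})\;\leq\;\frac{C_0\,2^{(n+1)a}}{d^{a}}\,\varphi(k_n)^{b},\qquad n\geq 0.
\]
Next I would prove by induction that $\varphi(k_n)\leq \varphi(k_0)\,r^{n}$ for all $n\geq 0$. The base case $n=0$ is an equality. Assuming the bound at step $n$ and plugging it into the recursion, one gets $\varphi(k_{n+1})\leq C_0\,2^{(n+1)a}d^{-a}\varphi(k_0)^{b}r^{nb}$; using the identity $2^{a}r^{b-1}=1$ (which is exactly how $r$ was chosen) one simplifies $2^{(n+1)a}r^{nb}=2^{a}r^{n}$, so that
\[
\varphi(k_{n+1})\;\leq\;\frac{C_0\,2^{a}}{d^{a}}\,\varphi(k_0)^{b-1}\,\varphi(k_0)\,r^{n}.
\]
By the choice $d^{a}=2^{ab/(b-1)}C_0\,\varphi(k_0)^{b-1}$ one has $C_0\,2^{a}d^{-a}\varphi(k_0)^{b-1}=2^{a-ab/(b-1)}=2^{-a/(b-1)}=r$, hence $\varphi(k_{n+1})\leq \varphi(k_0)\,r^{n+1}$, closing the induction.

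Finally, since $\varphi$ is nonincreasing and $k_n<k_0+d$ for every $n$, we have $0\leq \varphi(k_0+d)\leq \varphi(k_n)\leq \varphi(k_0)\,r^{n}$ for all $n$; letting $n\to\infty$ and using $0<r<1$ forces $\varphi(k_0+d)=0$. The only genuinely delicate point is the calibration of the decay ratio $r$: it must be chosen so that the blow-up factor $2^{na}$ coming from the shrinking level increments is precisely absorbed by the superlinear exponent $b>1$; once $r=2^{-a/(b-1)}$ is fixed, both the induction and the value of $d$ are forced. Everything else is elementary, and no assumption beyond $b>1$, the monotonicity, and the nonnegativity of $\varphi$ is needed.
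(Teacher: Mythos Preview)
Your proof is correct and is exactly the standard De Giorgi--Stampacchia iteration argument. The paper does not supply its own proof of this lemma; it merely cites it from \cite{KS}, and the argument you have written is precisely the one found there (geometric levels $k_n=k_0+d(1-2^{-n})$, the inductive bound $\varphi(k_n)\le\varphi(k_0)r^n$ with $r=2^{-a/(b-1)}$, and the calibration of $d$ so that the induction closes).
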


\begin{proof}[Proof of Theorem \ref{th:bound_nonhomo}] Here we just prove  the upper bound, being the lower one completely analogous.
Let us take $k\geq0$, $U(x,y)=E_s[u (x) ]$ and $\psi=(U-k)_+\in \mathcal{X}_{\Sigma_{\mathcal{D}}}^s(\mathscr{C}_{\Omega})$ as a test function in \eqref{def:soldebil}.  Using the trace inequality \eqref{eq:traceineq} together with the H\"older inequality, we get
\begin{align*}
\kappa_s\int_{\mathscr{C}_{\Omega}}y^{1-2s}\nabla U\nabla\psi dxdy&=\kappa_s\int_{A_{+}^*(k)}y^{1-2s}|\nabla U|^2 dxdy=\int_{A_{+}(k)}(U(x,0)-k)f(x)dx\\
%&\leq\left(\int_{A_{+}(k)}|f|^2dx\right)^{\frac{1}{2}} \left(\int_{A_{+}(k)}\left|U(x,0)-k\right|^2dx\right)^{\frac{1}{2}}\\
&\leq \left(\int_{A_{+}(k)}\!\!\!\!|f|^2dx\right)^{\frac{1}{2}}\!\!\left(\! C_{\mathcal{D}}^{-1}|A_+(k)|^{\frac{2s}{N}}\!\int_{A_{+}^*(k)}\!\!\! y^{1-2s}|\nabla U|^2 dxdy\right)^{\frac{1}{2}}\!\!.
\end{align*}
Thus,
\begin{equation}\label{uno}
\begin{split}
\int_{A_+^*(k)}y^{1-2s}|\nabla U|^2 dxdy&\leq C_{\mathcal{D}}^{-1}\kappa_s^{-2}|A_+(k)|^{\frac{2s}{N}}\int_{A_+(k)}|f|^2dx
\leq \frac{\|f\|_{L^{p}(\Omega)}^{2}|A_+(k)|^{1-\frac{2}{p}+\frac{2s}{N}}}{C_{\mathcal{D}} \kappa_s^{2}},
\end{split}
\end{equation}
and applying   the trace inequality \eqref{eq:traceineq} to the left-hand side of \eqref{uno} we get   for any $h>k$,
\begin{equation*}
(h-k)^2|A_+(h)|^{\frac{2}{2_s^*}}\leq\left(\int_{A_+(k)}\left|U(x,0)-k\right|^{2_s^*}dx\right)^{\frac{2}{2_s^*}}\,.
\end{equation*}
Thus we deduce
\begin{equation*}
(h-k)^2|A_+(h)|^{\frac{2}{2_s^*}}\leq \frac{\|f\|_{L^{p}(\Omega)}^{2} }{(C_{\mathcal{D}}\kappa_s)^{2}}|A_+(k)|^{1-\frac{2}{p}+\frac{2s}{N}}\,,
\end{equation*}
and setting  $\varphi(h)=|A_+(h)|$, it follows that
\begin{equation*}
\varphi(h)\leq \frac{\|f\|_{L^{p}(\Omega)}^{2_s^*}}{(C_{\mathcal{D}}\kappa_s)^{2}} \
\frac{\varphi^{\left(1-\frac{2}{p}+\frac{2s}{N}\right)\frac{2_s^*}{2}}(k) }{(h-k)^{2_s^*}}.
\end{equation*}
Applying now Lemma \ref{lem:B.1} with $a=2_s^*$ and $b=\left(1-\frac{2}{p}+\frac{2s}{N}\right)\frac{2_s^*}{2}>1$, we find $|\varphi(k_0+d)|=0$ with $d=C(N,s,|\Sigma_{\mathcal{D}}|)\|f\|_{L^{p}(\Omega)}|\varphi(k_0)|^{\frac{b-1}{a}}$, and $\frac{b-1}{a}=\frac{2s}{N}-\frac{1}{p}$, i.e.
\begin{equation*}
U(x,0)\leq k_0+C(N,s,|\Sigma_{\mathcal{D}}|)\|f\|_{L^{p}(\Omega)}|A_+(k_0)|^{\frac{2s}{N}-\frac{1}{p}} \quad\mbox{a.e. in } \Omega,
\end{equation*}
for any $k_0\geq0$, and we conclude
$%\begin{equation*}
u(x)\leq  C(N,s,|\Sigma_{\mathcal{D}}|)\|f\|_{L^{p}(\Omega)}|\Omega|^{\frac{2s}{N}-\frac{1}{p}}\quad\mbox{a.e. in } \Omega.
$%\end{equation*}
\end{proof}

Let $v(x)$ be the solution to \eqref{nonhomo_abajo} and $V(x,y)=E_s[v(x)]$ the solution to \eqref{nonhomo_arriba}.
Since the function
$(V-k)_+\in \mathcal{X}_{\Sigma_{\mathcal{D}}}^s(\mathscr{C}_{\Omega})$ for any $k\geq0$, repeating the proof  above   we deduce that $\forall z \in \Omega$
\begin{equation}\label{bound_nonhomo_aux}
%\max\limits_{x\in\Omega(z,R)}
\| v(x) \|_{L^{\infty} (\Omega(z,R))} \leq C(N,s,|\Sigma_{\mathcal{D}}|)\|f\|_{L^p(\Omega)}|\Omega(z,R)|^{\frac{2s}{N}-\frac{1}{p}}.
\end{equation}
Now we turn our attention to the study of the behavior of solutions to the homogeneous problem \eqref{homo}.
\begin{lemma}[Caccioppoli inequality]\label{lem:caccioppoli}
Assume that $z_0\in\overline{\Omega}$ and $R>0$ and suppose  that the function $W\in \mathcal{X}_{\Sigma_{\mathcal{D},R}}^s(\mathscr{C}_{\Omega(z_0,R)})$ is a solution to problem \eqref{homo}. Then, for any $Z\in\mathscr{C}_{\Omega(z_0,R)}^{\circ}$ and $0<\rho<r<\widetilde{\rho}(Z)$,   we have that there exists $  C>0$ such that
\begin{equation*}
\int_{\mathscr{C}_{\Omega(z_0,R)}(Z,\rho)}\mkern-25mu y^{1-2s}|\nabla W|^2dxdy\leq \frac{C}{(r-\rho)^2}\int_{\mathscr{C}_{\Omega(z_0,R)}(Z,r)}\mkern-25mu y^{1-2s}|W|^2dxdy\,.
\end{equation*}
\end{lemma}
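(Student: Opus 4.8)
The plan is to prove the Caccioppoli inequality by testing the weak formulation of the homogeneous problem \eqref{homo} with a suitably truncated test function of the form $\varphi = \eta^2 W$, where $\eta$ is a smooth cutoff function. First I would fix $Z \in \mathscr{C}_{\Omega(z_0,R)}^{\circ}$ and radii $0 < \rho < r < \widetilde{\rho}(Z)$, and choose $\eta \in C^\infty(\mathbb{R}_+^{N+1})$ with $0 \le \eta \le 1$, $\eta \equiv 1$ on $B_\rho(Z)$, $\operatorname{supp}(\eta) \subset B_r(Z)$, and $|\nabla \eta| \le \frac{C}{r-\rho}$. Since $r < \widetilde{\rho}(Z)$, by Definition \ref{def:truncate} and the subsequent remark, the function $\eta^2 W$ is an admissible test function; specifically when $Z \in \Sigma_{\mathcal{D},R}^*$ the cutoff has support away from $\partial_B \mathscr{C}_{\Omega(z_0,R)}$ so $\eta^2 W$ vanishes on $\Sigma_{\mathcal{D},R}^*$ wherever required, and when $Z$ is an interior or Neumann point, $\eta^2 W$ is supported away from $\partial_0\mathscr{C}_{\Omega(z_0,R)}$ — in all cases $\eta^2 W$ lies in the correct space so that \eqref{soldebil} applies to it.

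The key computation is then standard. Plugging $\varphi = \eta^2 W$ into \eqref{soldebil} gives
\begin{equation*}
\int_{\mathscr{C}_{\Omega(z_0,R)}} y^{1-2s} \langle \nabla W, \nabla(\eta^2 W)\rangle\, dxdy = 0,
\end{equation*}
and expanding $\nabla(\eta^2 W) = \eta^2 \nabla W + 2\eta W \nabla \eta$ yields
\begin{equation*}
\int_{\mathscr{C}_{\Omega(z_0,R)}} y^{1-2s} \eta^2 |\nabla W|^2\, dxdy = -2\int_{\mathscr{C}_{\Omega(z_0,R)}} y^{1-2s}\, \eta W\, \langle \nabla W, \nabla \eta\rangle\, dxdy.
\end{equation*}
Applying Cauchy--Schwarz and then Young's inequality with parameter $\epsilon$ to the right-hand side, $|2\eta W \langle \nabla W,\nabla\eta\rangle| \le \epsilon \eta^2 |\nabla W|^2 + \epsilon^{-1} W^2 |\nabla \eta|^2$, we absorb the $\eta^2|\nabla W|^2$ term into the left-hand side (choosing $\epsilon = \tfrac12$), obtaining
\begin{equation*}
\int_{\mathscr{C}_{\Omega(z_0,R)}} y^{1-2s} \eta^2 |\nabla W|^2\, dxdy \le C \int_{\mathscr{C}_{\Omega(z_0,R)}} y^{1-2s} W^2 |\nabla \eta|^2\, dxdy.
\end{equation*}
Finally, using $\eta \equiv 1$ on $B_\rho(Z)$ to bound the left side from below by the integral over $\mathscr{C}_{\Omega(z_0,R)}(Z,\rho)$, and $|\nabla\eta| \le C/(r-\rho)$ supported in $B_r(Z)$ to bound the right side, we arrive at the claimed inequality.

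The main technical point — and the only place where anything beyond the classical argument is needed — is justifying that $\eta^2 W$ is a legitimate test function in \eqref{soldebil}, i.e.\ that it belongs to the appropriate weighted Sobolev space with the correct vanishing trace, which is exactly what Definition \ref{def:truncate} and the accompanying remark are designed to guarantee once $r < \widetilde{\rho}(Z)$. A secondary point worth a remark is that all integrals are finite: $W \in \mathcal{X}_{\Sigma_{\mathcal{D},R}}^s$ has $y^{1-2s}|\nabla W|^2$ integrable, and the weighted $L^2$ norm of $W$ on bounded pieces of the cylinder is controlled since $y^{1-2s} \in A_2$ (or directly via the weighted Poincaré/Sobolev inequality of Theorem \ref{wpoincare}), so the manipulations above are rigorous. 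No delicate boundary-term issues arise precisely because $\eta$ is compactly supported inside $B_{\widetilde\rho(Z)}(Z)$, which keeps us away from $\partial_B\mathscr{C}_{\Omega(z_0,R)}$ and from the part of the lateral boundary on which no condition is prescribed.
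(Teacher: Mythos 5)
Your proposal is correct and follows essentially the same route as the paper: both test \eqref{soldebil} with $\eta^2 W$ for a cutoff $\eta$ equal to $1$ on $B_\rho(Z)$, supported in $B_r(Z)$ with $|\nabla\eta|\le C/(r-\rho)$, expand, apply Young's inequality to absorb the gradient term, and conclude. The only cosmetic difference is in how admissibility of the test function is phrased (you invoke Definition \ref{def:truncate} and its remark, while the paper simply requires $\eta$ to vanish on $\partial_L\mathscr{C}_{\Omega(z_0,R)}\backslash(\Sigma_{\mathcal{D},R}^*\cup\Sigma_{\mathcal{N},R}^*)$ so that $\eta^2 W$ vanishes on $\partial_0\mathscr{C}_{\Omega(z_0,R)}$), and both justifications amount to the same observation.
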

\begin{proof}
We use $\psi=\eta^2 W$ as a test function in \eqref{soldebil}, with $\eta\in C^1(\mathscr{C}_{\Omega(z_0,R)})$ such that it  vanishes on $\partial_L\mathscr{C}_{\Omega(z_0,R)}\backslash(\Sigma_{\mathcal{D},R}^*\cup\Sigma_{\mathcal{N},R}^*)$; observe that  in particular   $\psi\equiv 0$   on $\partial_L\mathscr{C}_{\Omega(z_0,R)}\backslash\Sigma_{\mathcal{N},R}^*$, so that we have that
\begin{equation}\label{Cacc}
\begin{array}{c}
\dys \int_{\mathscr{C}_{\Omega(z_0,R)}}\mkern-35mu y^{1-2s}\eta^2|\nabla W|^2dxdy=-2\int_{\mathscr{C}_{\Omega(z_0,R)}}\mkern-35mu y^{1-2s}\langle\eta\nabla W,W\nabla\eta\rangle dxdy\\
\dys \leq 2\left(\frac{1}{2\varepsilon}\int_{\mathscr{C}_{\Omega(z_0,R)}}\mkern-35mu y^{1-2s}|\nabla \eta|^2W^2dxdy+\frac{\varepsilon}{2}\int_{\mathscr{C}_{\Omega(z_0,R)}}\mkern-35mu y^{1-2s}\eta^2|\nabla W|^2dxdy\right),
\end{array}
\end{equation}
for any   $0<\varepsilon<1$.
To complete the proof, given $Z\in\mathscr{C}_{\Omega(z_0,R)}^\circ$ and $\rho<r<\widetilde{\rho}(Z)$ it is enough to set $\eta$ such that
\begin{equation*}
\eta\equiv1 \mbox{ in } B_{\rho}(Z),\qquad \eta\equiv0\mbox{ in } B_r^c(Z)\qquad  \mbox{ and }\qquad  |\nabla\eta|\leq\frac{c}{(r-\rho)}.
\end{equation*}
and plug into \eqref{Cacc}.
\end{proof}

Next we prove the following weighted version of the Poincar\'e Inequality.

\begin{lemma}\label{lem:psobolev}
Let $p\geq2-\varepsilon_0$ for some $0<\varepsilon_0<1$ and $U\in \mathcal{X}^s(\mathscr{C}_{\Omega})$ such that $\{U=0\}\in\mathcal{F}(\beta,A)$ for $A\subset\overline{\mathscr{C}}_{\Omega}$. Then $\exists \beta_s = \beta_s (N,p, y^{1-2s})>0$ such that
\begin{equation}\label{SobA}
\int_{A}y^{1-2s}|U|^pdxdy\leq \beta_s^p[\mbox{diam}(A)]^p|A|_{y^{1-2s}}^{\left(\frac{1}{\sigma}-1\right)}\  |\{(x,y)\in A: U\neq0\}|_{y^{1-2s}}^{\frac{1}{\sigma'}}\ \int_{A} y^{1-2s}|\nabla U|^pdxdy  ,
\end{equation}
and
\begin{equation}\label{level}
\int_{A_+^*(k,r)}\mkern-35mu y^{1-2s}|U-k|^2dxdy
\leq \beta_s^2r^2|B_r|_{y^{1-2s}}^{ \frac{1}{\sigma}-1 }|A_+^*(k,r)|_{y^{1-2s}}^{\frac{1}{\sigma'}}  \int_{A_+^*(k,r)}\mkern-35mu y^{1-2s}|\nabla U|^2dxdy ,
\end{equation}
with  $1\leq \sigma \leq\frac{N+1}{N}+\delta$ for some $\delta>0$.

%with   $1\leq \sigma \leq\frac{N+1}{N}+\delta$ for some $\delta>0$. %and $\frac{1}{\sigma}+\frac{1}{\sigma'}=1$, provided that
%{\color{red}\begin{itemize}
%\item[(i)] $k\in\mathcal{K}^+(Z)$ if $Z\in\Sigma_{\mathcal{D},R}^*$\,.
%\item[(ii)] $k\in\mathcal{K}^+(Z)$ is such that
%\begin{equation*}
%|A_+^*(k,r)|_{y^{1-2s}}\leq(1-\lambda)|\mathscr{C}_{\Omega(z,R)}(Z,r)|_{y^{1-2s}},
%\end{equation*}
%if $Z\in\mathscr{C}_{\Omega(z,R)}^{\circ}\backslash\Sigma_{\mathcal{D},R}^*$.
%\end{itemize}}
\end{lemma}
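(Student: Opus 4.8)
The plan is to obtain both inequalities from the defining property \eqref{sobA} of the family $\mathcal{F}(\beta_s,A)$, the only genuinely new ingredient being one application of Hölder's inequality to relocate the factor $|\{U\neq 0\}|_{y^{1-2s}}^{1/\sigma'}$.

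\textbf{Step 1: proof of \eqref{SobA}.} Since $|U|^p$ vanishes on $\{U=0\}$ we may write $\int_A y^{1-2s}|U|^p\,dxdy=\int_{A\cap\{U\neq0\}} y^{1-2s}|U|^p\,dxdy$. Using $\tfrac1\sigma+\tfrac1{\sigma'}=1$, factor the integrand on $A\cap\{U\neq0\}$ as $\big(y^{1-2s}|U|^{p\sigma}\big)^{1/\sigma}\,\big(y^{1-2s}\big)^{1/\sigma'}$ and apply Hölder's inequality with exponents $\sigma,\sigma'$:
\begin{equation*}
\int_A y^{1-2s}|U|^p\,dxdy\leq\left(\int_A y^{1-2s}|U|^{p\sigma}\,dxdy\right)^{1/\sigma}\big|\{(x,y)\in A:U\neq0\}\big|_{y^{1-2s}}^{1/\sigma'}.
\end{equation*}
Because $\{U=0\}\in\mathcal{F}(\beta_s,A)$, inequality \eqref{sobA} applies to $U$ itself; raising it to the power $p$ gives
\begin{equation*}
\left(\int_A y^{1-2s}|U|^{p\sigma}\,dxdy\right)^{1/\sigma}=\|U\|_{L^{\sigma p}(A,y^{1-2s}dxdy)}^p\leq \beta_s^p[\mathrm{diam}(A)]^p|A|_{y^{1-2s}}^{\frac1\sigma-1}\int_A y^{1-2s}|\nabla U|^p\,dxdy.
\end{equation*}
Substituting this bound into the previous display yields exactly \eqref{SobA}.

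\textbf{Step 2: proof of \eqref{level}.} Fix $k\geq 0$ (the case pertinent to the sets $A_+^*(k,r)$; the sign-reversed case for $A_-^*$ is identical) and apply Step 1 with $p=2$, with $U$ replaced by the truncation $(U-k)_+\in\mathcal{X}^s(\mathscr{C}_\Omega)$, and with $A$ replaced by the cylinder cap $\mathscr{C}_\Omega(Z,r)$. Three remarks make the right-hand side collapse to the desired form. First, $(U-k)_+$ vanishes on $\{U\leq k\}$, which contains $\{U=0\}$ since $k\geq0$; and $\mathcal{F}(\beta_s,\cdot)$ is stable under enlarging the zero set (a function vanishing on a larger set a fortiori vanishes on the smaller one and hence satisfies \eqref{sobA}), so $\{(U-k)_+=0\}\in\mathcal{F}(\beta_s,\mathscr{C}_\Omega(Z,r))$ precisely on those caps where the weighted Poincaré inequality of Lemma \ref{poincarelemma} (equivalently, Theorem \ref{wpoincare}) is available. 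Second, on $\mathscr{C}_\Omega(Z,r)$ we have $(U-k)_+\equiv U-k$ and $\nabla(U-k)_+=\nabla U$ on $A_+^*(k,r)$ while both vanish off $A_+^*(k,r)$, and $\{(x,y)\in\mathscr{C}_\Omega(Z,r):(U-k)_+\neq0\}=A_+^*(k,r)$. Third, $\mathrm{diam}(\mathscr{C}_\Omega(Z,r))\leq 2r$, and since $\tfrac1\sigma-1\leq 0$, the weighted density estimate \eqref{ineq:zetaw} gives $|\mathscr{C}_\Omega(Z,r)|_{y^{1-2s}}^{\frac1\sigma-1}\leq\zeta_s^{\frac1\sigma-1}|B_r|_{y^{1-2s}}^{\frac1\sigma-1}$. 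Absorbing the constants $4$ and $\zeta_s^{\frac1\sigma-1}$ into a redefined $\beta_s$ produces \eqref{level}.

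\textbf{Main obstacle.} Step 1 is essentially bookkeeping once the Hölder splitting is set up. The delicate point is in Step 2: the exponent $\tfrac1\sigma-1$ is nonpositive, which forces us to estimate $|\mathscr{C}_\Omega(Z,r)|_{y^{1-2s}}$ from \emph{below} by $|B_r|_{y^{1-2s}}$ through \eqref{ineq:zetaw}, and — more importantly — to apply \eqref{SobA} on the full cap $\mathscr{C}_\Omega(Z,r)$ rather than on $A_+^*(k,r)$ itself, since a Sobolev inequality with no boundary condition would fail on the truncated domain; the boundary information is retained only because $(U-k)_+$ vanishes on a set that still belongs to $\mathcal{F}$.
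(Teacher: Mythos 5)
Your proposal is correct and follows essentially the same route as the paper: inequality \eqref{SobA} is obtained from the weighted Sobolev inequality \eqref{sobA} combined with H\"older's inequality with exponents $\sigma,\sigma'$, and \eqref{level} by applying it with $p=2$ to the truncation $(U-k)_+$. The paper's own proof is just terser, leaving implicit the bookkeeping you spell out (that $(U-k)_+$ still vanishes on the set in $\mathcal{F}$ for $k\geq 0$, $\mathrm{diam}(\mathscr{C}_{\Omega}(Z,r))\leq 2r$, and the passage from $|\mathscr{C}_{\Omega}(Z,r)|_{y^{1-2s}}$ to $|B_r|_{y^{1-2s}}$ via \eqref{ineq:zetaw} with the resulting constants absorbed into $\beta_s$).
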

\begin{proof}

In fact, \eqref{SobA} is consequence of   \eqref{poincareFabes} and    the H\"older inequality.

As far as \eqref{level} is concerned, we   follows   \cite[Theorem 6.1]{S}: given $U\in \mathcal{X}^s(\mathscr{C}_{\Omega(z_0,R)})$, let us consider the function $t_k^+(U)=(U-k)_+$ that belongs to $\mathcal{X}^s(\mathscr{C}_{\Omega(z_0,R)})$ for any $k\in\mathbb{R}$. Moreover, if $U\in \mathcal{X}_{\Sigma_{\mathcal{D},R}}^s(\mathscr{C}_{\Omega(z_0,R)})$ then $t_k^+(U)\in \mathcal{X}_{\Sigma_{\mathcal{D},R}}^s(\mathscr{C}_{\Omega(z_0,R)})$ for any $k\geq0$.
Then, applying   \eqref{sobA} to $(U-k)_+$ with $p=2$, \eqref{level} follows.
\end{proof}

A direct consequence of Lemma \ref{lem:psobolev}  is the following result.

\begin{lemma}\label{lem:measure_lvlset}
Given $z_0\in\!\overline{\Omega}$ and $R>\!0$, let $U\in \mathcal{X}^s(\mathscr{C}_{\Omega(z_0,R)})$. Then, for any $Z\in\mathscr{C}_{\Omega(z_0,R)}^{\circ}$ and $0<r<\overline{\rho}(Z)$, there exist %$\varepsilon >0$,
$\varepsilon_0 \in (0,1) $ and $\beta_s=\beta_s (N,p,y^{1-2s})>0$ such that
\begin{equation}\label{level2}
\displaystyle (h-k)^2 |A_+^*(h,r)|_{y^{1-2s}}^{\frac{2}{q}}
\leq
 \beta_s^2r^2|B_r |_{y^{1-2s}}^{2\left(\frac{1}{q}-\frac{1}{p}\right)}
 |A_+^*(k,r)-A_+^*(h,r)|_{y^{1-2s}}^{\frac{2}{p}-1}
 \int_{A_+^*(k,r)}
 \mkern-35mu y^{1-2s}|\nabla U|^2dxdy,
\end{equation}
with $h>k$, $q=\frac{N+1}{N} (2-\varepsilon_0)$ and   $p=2-\varepsilon_0$.
% depending on $N$, $p$ and the weight $y^{1-2s}$; provided that
%\begin{itemize}
%\item[(i)] $k\in\mathcal{K}^+(Z)$ if $Z\in\Sigma_{\mathcal{D},R}^*$\,.
%\item[(ii)] $k\in\mathcal{K}^+(Z)$ is such that
%\begin{equation*}
%|A_+^*(k,r)|_{y^{1-2s}}\leq(1-\lambda)|\mathscr{C}_{\Omega(z,R)}(Z,r)|_{y^{1-2s}},
%\end{equation*}
%if $Z\in\mathscr{C}_{\Omega(z,R)}^{\circ}\backslash\Sigma_{\mathcal{D},R}^*$.
%\end{itemize}
\end{lemma}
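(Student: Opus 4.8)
The plan is to apply the weighted Sobolev--Poincar\'e inequality \eqref{sobA} --- valid, by Lemma~\ref{lem:psobolev}, for any function whose zero set lies in $\mathcal{F}(\beta_s,A)$ --- to a suitable truncation of $U$ between the two levels $k$ and $h$, and then to dispose of the terms that arise by H\"older's inequality.

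First, I would fix $h>k$, write $A:=\mathscr{C}_{\Omega(z_0,R)}(Z,r)$, and introduce the truncation
\[
w:=\{(U-k)_+\}^{h-k}=\min\{(U-k)_+,\,h-k\}\in\mathcal{X}^s(\mathscr{C}_{\Omega(z_0,R)}),
\]
which satisfies $w\equiv h-k$ on $A_+^*(h,r)$, $w\equiv 0$ on $A\setminus A_+^*(k,r)$, and $\nabla w=\nabla U\,\chi_{A_+^*(k,r)\setminus A_+^*(h,r)}$ a.e.\ in $A$. Since $\{w=0\}\supseteq A\setminus A_+^*(k,r)$, the thickness bound \eqref{hyp:thcota} forces this set to carry a fixed proportion of $|A|_{y^{1-2s}}$ (and, moreover, $w$ vanishes on $\Sigma_{\mathcal{D},R}^*\cap B_r(Z)$ when $Z\in\Sigma_{\mathcal{D},R}^*$), so the zero set of $w$ belongs to $\mathcal{F}(\beta_s,A)$ and \eqref{sobA} applies to $w$.

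Next, I would choose $p=2-\varepsilon_0$ with $\varepsilon_0\in(0,1)$ as in Theorem~\ref{wpoincare}, and $\sigma=\frac{N+1}{N}$, so that $\sigma p=q$ and $1\le\sigma\le\frac{N+1}{N}+\delta$. Applying \eqref{sobA} to $w$, bounding $\mbox{diam}(A)\le 2r$, and replacing $|A|_{y^{1-2s}}$ by $|B_r(Z)|_{y^{1-2s}}$ via \eqref{ineq:zetaw} (the exponent $\tfrac1p(\tfrac1\sigma-1)$ being negative, the inequality $|A|_{y^{1-2s}}\ge\zeta_s|B_r(Z)|_{y^{1-2s}}$ is used in the right direction), I would obtain, after renaming the constant,
\[
\left(\int_{A}y^{1-2s}|w|^{q}dxdy\right)^{\frac1q}\le\beta_s\,r\,|B_r|_{y^{1-2s}}^{\frac1p\left(\frac1\sigma-1\right)}\left(\int_{A}y^{1-2s}|\nabla w|^{p}dxdy\right)^{\frac1p}.
\]
Since $w\equiv h-k$ on $A_+^*(h,r)$, the left-hand side is at least $(h-k)\,|A_+^*(h,r)|_{y^{1-2s}}^{1/q}$; and since $\nabla w$ is supported in $A_+^*(k,r)\setminus A_+^*(h,r)$ and equals $\nabla U$ there, H\"older's inequality with conjugate exponents $\tfrac2p$ and $\tfrac2{2-p}$ gives
\[
\left(\int_{A}y^{1-2s}|\nabla w|^{p}dxdy\right)^{\frac1p}\le|A_+^*(k,r)-A_+^*(h,r)|_{y^{1-2s}}^{\frac1p-\frac12}\left(\int_{A_+^*(k,r)}y^{1-2s}|\nabla U|^{2}dxdy\right)^{\frac12}.
\]
Feeding these two bounds into the previous display and squaring, and using $\tfrac2p\big(\tfrac1\sigma-1\big)=2\big(\tfrac1q-\tfrac1p\big)$ and $2\big(\tfrac1p-\tfrac12\big)=\tfrac2p-1$, yields exactly \eqref{level2}.

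The one step needing genuine care is the check that the truncation $w$ is admissible for \eqref{sobA}, i.e.\ that $\{w=0\}\in\mathcal{F}(\beta_s,A)$: this is precisely where the geometry of $\Omega$ enters --- through the thickness conditions \eqref{ineq:zetaw}--\eqref{hyp:thcota} for an interior point $Z$, and through the vanishing of $w$ on $\Sigma_{\mathcal{D},R}^*$ when $Z\in\Sigma_{\mathcal{D},R}^*$ --- and it is the only point in the argument at which the position of $Z$ matters; the rest is exponent bookkeeping plus a single H\"older inequality.
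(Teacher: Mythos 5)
Your proposal is correct and follows essentially the same route as the paper: your truncation $w=\min\{(U-k)_+,h-k\}$ is exactly the paper's $t^+_{h,k}(U)=\{U\}^h-\{U\}^k$, and the argument proceeds identically via the weighted Sobolev--Poincar\'e inequality \eqref{sobA} with $\sigma=\tfrac{N+1}{N}$, $p=2-\varepsilon_0$, the lower bound $(h-k)\,|A_+^*(h,r)|_{y^{1-2s}}^{1/q}$ on the left, and a single H\"older step on the gradient term. Your extra care about the admissibility of the zero set of $w$ (via \eqref{ineq:zetaw}--\eqref{hyp:thcota}) is a point the paper leaves implicit, but it does not change the structure of the proof.
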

\begin{proof}
Given $U\in \mathcal{X}^s(\mathscr{C}_{\Omega(z_0,R)})$ and $h>k$, let $t_{h,k}^+(U)=\{U\}^h-\{U\}^k$. Note that $t_{h,k}^+(U)\in \mathcal{X}^s(\mathscr{C}_{\Omega(z_0,R)})$ for any $k\in\mathbb{R}$. Moreover, if $U\in \mathcal{X}_{\Sigma_{\mathcal{D},R}}^s(\mathscr{C}_{\Omega(z_0,R)})$ then $t_{h,k}^+(U)\in\mathcal{X}_{\Sigma_{\mathcal{D},R}}^s(\mathscr{C}_{\Omega(z_0,R)})$ for any $h>k\geq0$.
%\begin{enumerate}
%\item[(i)] If $Z\in\Sigma_{\mathcal{D},R}^*$, then $\{t_{h,k}^+(U)=0\}=\{t_k^+(U)=0\}\supset\{U=0\}$ and, therefore, $\{t_{h,k}^+(U)=0\}\cap\mathscr{C}_{\Omega(z,R)}(Z,r)\in\mathcal{F}(\beta_s,\mathscr{C}_{\Omega(z,R)}(Z,r))$.
%\item[(ii)] If $Z\in\mathscr{C}_{\Omega(z,R)}^{\circ}\backslash\Sigma_{\mathcal{D},R}^*$, then $\{t_{h,k}^+(U)=0\}\supset\{t_{k}^+(U)=0\}$. Repeating the arguments for (ii) in Lemma \ref{lem:sobolev_Truncate} we conclude $\{t_{h,k}^+(U)=0\}\cap\mathscr{C}_{\Omega(z,R)}(Z,r)\in\mathcal{F}(\beta_s,\mathscr{C}_{\Omega(z,R)}(Z,r))$.
%\end{enumerate}
Thus, we use  Lemma \ref{lem:psobolev} with $\sigma=\frac{N+1}{N}$ and $p=2-\varepsilon_0$ so that taking $q=\sigma p=\frac{N+1}{N}(2-\varepsilon_0)$ we obtain,
\begin{equation}\label{A}
\left(\int_{\mathscr{C}_{\Omega(z_0 ,R)}(Z,r)}\mkern-35mu y^{1-2s}|t_{h,k}^+(U)|^q dxdy\right)^{\frac{1}{q}}\mkern-10mu
\leq \beta_sr|B_r |_{y^{1-2s}}^{\frac{1}{q}-\frac{1}{p}}\left(\int_{A_+^*(k,r)-A_+^*(h,r)}\mkern-35mu y^{1-2s}
|\nabla U|^pdxdy\right)^{\frac{1}{p}}.\mkern-35mu.
\end{equation}
At one hand, it is immediate that
\begin{equation}\label{B}
(h-k)^2|A_+^*(h,r)|_{y^{1-2s}}^{\frac{2}{q}}\leq \left(\int_{\mathscr{C}_{\Omega(z_0 ,R)}(Z,r)}y^{1-2s}\left|t_{h,k}^+(U)\right|^q dxdy\right)^{\frac{2}{q}}.
\end{equation}
On the other hand, thanks to H\"older inequality
\begin{equation}\label{C}
\left(\int_{A_+^*(k,r)-A_+^*(h,r)}\mkern-35mu y^{1-2s}|\nabla U|^pdxdy\right)^{\frac{2}{p}}\leq
 |A_+^*(k,r)-A_+^*(h,r)|_{y^{1-2s}}^{\frac{2}{p}-1} \
\int_{A_+^*(k,r)}\mkern-35mu y^{1-2s}|\nabla U|^2dxdy .
\end{equation}
Thus \eqref{level2} follows by gathering together \eqref{A}, \eqref{B} and \eqref{C}.
\end{proof}

Following \cite[Theorem 8.1]{S}, we show the next result.

\begin{theorem}\label{thcota} Let $z_0\in\!\overline{\Omega}$,  $R>\!0$,   and let $W\in \mathcal{X}_{\Sigma_{\mathcal{D},R}}^s(\mathscr{C}_{\Omega(z_0,R)})$ be  a solution to
 the homogeneous problem \eqref{homo}. Then, for any $Z\in\mathscr{C}_{\Omega(z_0,R)}^{\circ}$, $0<\ell<1$ and
 $0<r<\min\{\widetilde{\rho}(Z),\overline{\rho}(Z)\}$, there exists a positive constant $\Lambda=\Lambda(\ell)$ such that
\begin{equation*}
|A_+^*(k+\ell d,r-\ell r)|=0, \quad \mbox{with }  k\in\mathcal{K}^+(Z) \qquad \mbox{and} \qquad |A_-^*(k-\ell d,r- \ell r)|=0, \qquad \mbox{with }  k\in\mathcal{K}^-(Z)    \,,
\end{equation*}
where
\begin{equation}\label{de}
d^2\geq \frac{1}{\Lambda (\ell) \ |B_{r} |_{y^{1-2s}}}\int_{A_+^*(k,r)}\mkern-35mu y^{1-2s}|W-k|^2dxdy\,.
\end{equation}
\end{theorem}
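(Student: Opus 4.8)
The plan is to run a De Giorgi iteration on the dyadic sequence of levels and radii. I would fix $Z\in\mathscr{C}_{\Omega(z_0,R)}^\circ$, pick $r<\min\{\widetilde\rho(Z),\overline\rho(Z)\}$ and $\ell\in(0,1)$, and define for $n\ge 0$ the levels $k_n=k+\ell d(1-2^{-n})$ and radii $r_n=r(1-\ell+\ell 2^{-n})$, so that $k_n\uparrow k+\ell d$ and $r_n\downarrow r-\ell r$. Set $\mathcal{A}_n=|A_+^*(k_n,r_n)|_{y^{1-2s}}$. The goal is a recursive inequality of the form $\mathcal{A}_{n+1}\le C\,B^n\,\mathcal{A}_n^{1+\beta}$ with $\beta>0$, which forces $\mathcal{A}_n\to 0$ provided $\mathcal{A}_0$ is small enough — and the smallness of $\mathcal{A}_0$ is exactly what the hypothesis \eqref{de} on $d^2$ encodes (a large $d$ makes the set where $W>k_0$ inside the small ball have small measure). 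This is the standard trick: first prove "smallness of the starting level set $\Rightarrow$ the iteration converges", then choose $d$ to guarantee that smallness.

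The engine of each step combines three ingredients already available in the excerpt. First, apply Lemma \ref{lem:measure_lvlset} (inequality \eqref{level2}) with $h=k_{n+1}$, $k=k_n$, $r=r_n$ (note $W\in\mathcal{X}_{\Sigma_{\mathcal{D},R}}^s$ and $k_n\ge k\in\mathcal{K}^+(Z)$, so the truncations lie in the right space): this bounds $(k_{n+1}-k_n)^2\mathcal{A}_{n+1}^{2/q}$ by a constant times $r_n^2|B_{r_n}|_{y^{1-2s}}^{2(1/q-1/p)}\,|A_+^*(k_n,r_n)\setminus A_+^*(k_{n+1},r_n)|_{y^{1-2s}}^{2/p-1}\int_{A_+^*(k_n,r_n)}y^{1-2s}|\nabla W|^2$. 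Since $k_{n+1}-k_n=\ell d\,2^{-(n+1)}$, the left side carries a factor $(\ell d)^2 4^{-(n+1)}$. Second, bound the Dirichlet integral on $A_+^*(k_n,r_n)\subset\mathscr{C}_{\Omega(z_0,R)}(Z,r_n)$ by the Caccioppoli inequality, Lemma \ref{lem:caccioppoli}, applied to $(W-k_n)_+$ (which is again a subsolution-type object on the larger ball $\mathscr{C}_{\Omega(z_0,R)}(Z,r_{n-1})$), getting $\int_{A_+^*(k_n,r_n)}y^{1-2s}|\nabla W|^2\le \frac{C}{(r_{n-1}-r_n)^2}\int_{A_+^*(k_n,r_{n-1})}y^{1-2s}|W-k_n|^2$, with $r_{n-1}-r_n=\ell r\,2^{-n}$ contributing a $4^n/(\ell r)^2$. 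Third, control $\int_{A_+^*(k_n,r_{n-1})}y^{1-2s}|W-k_n|^2$ — on this set $W-k_n\le W-k\le$ (loosely) comparable quantities, but more precisely one uses $|W-k_n|\le |W-k|$ there and the crude bound $\int_{A_+^*(k_n,r_{n-1})}y^{1-2s}|W-k|^2\le \int_{A_+^*(k,r)}y^{1-2s}|W-k|^2\le \Lambda(\ell)|B_r|_{y^{1-2s}}d^2$ from \eqref{de}; alternatively, and more in the spirit of De Giorgi, one keeps it as $d^2|B_{r_{n-1}}|_{y^{1-2s}}$ up to constants by noting $|W-k_n|\le \ell d$ is false but $\int_{A_+^*(k_n,\cdot)}|W-k|^2$ decreases in $n$ — I would use the monotone bound by $\Lambda(\ell)|B_r|_{y^{1-2s}}d^2$.

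Putting these together and using \eqref{ineq:zetaw} together with the comparability of $|B_{r_n}|_{y^{1-2s}}$, $|B_{r_{n-1}}|_{y^{1-2s}}$, $|B_r|_{y^{1-2s}}$ up to a fixed constant (all radii are between $r-\ell r$ and $r$), the $d^2$ factors cancel on the two sides and the powers of $r$ and of $|B_r|_{y^{1-2s}}$ balance — this is the point of the precise exponents $2(1/q-1/p)$ in \eqref{level2} — leaving, after raising to the power $q/2$ and writing $\mathcal{A}_{n}^{\,2/p-1}\le$ (a power of $\mathcal{A}_n$ with positive exponent, since $p=2-\varepsilon_0<2$ gives $2/p-1>0$, wait: $2/p-1>0$ indeed), an inequality $\mathcal{A}_{n+1}\le C(\ell)\,16^{\,nq/2}\,\mathcal{A}_n^{(q/2)(2/p-1)+\text{(from }\mathcal{A}_{n+1}\text{ side already isolated)}}$; collecting exponents one checks the total power of $\mathcal{A}_n$ exceeds $1$ precisely because $q/p=(N+1)/N>1$. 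Then the classical fast-geometric-convergence lemma (e.g.\ the discrete iteration lemma used for Lemma \ref{lem:B.1}, in its sequence form) gives $\mathcal{A}_n\to 0$ provided $\mathcal{A}_0\le C(\ell)^{-1/\beta}16^{-1/\beta^2}$; since $\mathcal{A}_0=|A_+^*(k_0,r_0)|_{y^{1-2s}}=|A_+^*(k,r)|_{y^{1-2s}}\le |\mathscr{C}_{\Omega(z_0,R)}(Z,r)|_{y^{1-2s}}$ is a priori bounded, one absorbs it by choosing $\Lambda(\ell)$ large enough in \eqref{de} — larger $d$ does not change $\mathcal{A}_0$, so the smallness must instead be extracted by noting $\int_{A_+^*(k,r)}y^{1-2s}|W-k|^2\ge (k_1-k)^2|A_+^*(k_1,r)|_{y^{1-2s}}$ and iterating once at the top — the cleanest route is to relabel so that the very first application of \eqref{level2} already produces a small constant via the $(\ell d)^{-2}\int y^{1-2s}|W-k|^2$ ratio being $\le \Lambda(\ell)^{-1}|B_r|_{y^{1-2s}}$. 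The main obstacle, and the step deserving the most care, is exactly this bookkeeping of exponents of $\mathcal{A}_n$, of $r$ and of $|B_r|_{y^{1-2s}}$ so that everything homogenizes and the residual power of $\mathcal{A}_n$ is genuinely $>1$ uniformly in $\ell$; the weighted volume comparabilities \eqref{ineq:zetaw}, \eqref{hyp:thcota} and the doubling property of $y^{1-2s}\in A_2$ are what make this work, and the symmetric statement for $A_-^*$ follows verbatim by replacing $W$ with $-W$ and $\mathcal{K}^+$ with $\mathcal{K}^-$.
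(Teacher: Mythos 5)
Your plan has a genuine gap at its core step. You propose to iterate only the weighted measures $\mathcal{A}_n=|A_+^*(k_n,r_n)|_{y^{1-2s}}$ and, after Caccioppoli, to discard the energy via the crude bound $\int_{A_+^*(k_n,r_{n-1})}y^{1-2s}|W-k_n|^2\,dxdy\leq \Lambda(\ell)|B_r|_{y^{1-2s}}d^2$. With that choice the factors $d^2$ cancel between the two sides of \eqref{level2}, and what remains is
\begin{equation*}
\mathcal{A}_{n+1}^{\frac{2}{q}}\ \leq\ \frac{C\,16^{\,n}\,\Lambda}{\ell^{4}}\,|B_r|_{y^{1-2s}}^{\,2\left(\frac1q-\frac1p\right)+1}\,\mathcal{A}_n^{\frac{2}{p}-1},
\end{equation*}
i.e.\ after raising to the power $q/2$ the exponent of $\mathcal{A}_n$ is $\frac{q}{p}-\frac{q}{2}=\frac{(N+1)\varepsilon_0}{2N}$, which is strictly less than $1$ for every admissible $\varepsilon_0\in(0,1)$ and $N\geq1$. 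Your claim that ``the total power of $\mathcal{A}_n$ exceeds $1$ precisely because $q/p=(N+1)/N>1$'' is therefore false: $q/p>1$ only makes the exponent positive, not larger than $1$, and a recursion $\mathcal{A}_{n+1}\leq CB^n\mathcal{A}_n^{\beta}$ with $\beta<1$ and $B>1$ does not force $\mathcal{A}_n\to0$, no matter how small $\mathcal{A}_0$ is. The superlinearity must come from \emph{keeping the energy in the iteration}: Chebyshev converts measure at level $k_{n+1}$ into energy at level $k_n$ divided by $(\ell d)^2$, and it is this extra factor (which you threw away with the monotone bound) that produces an exponent $>1$. This is exactly what the paper does, in a two-parameter rather than dyadic form: it iterates the quantity $\varphi(k,\rho)=\bigl[i(k,\rho)\bigr]^{\xi}a(k,\rho)$, with $i(k,\rho)=\int_{A_+^*(k,\rho)}y^{1-2s}|W-k|^2$, the exponent $\xi$ chosen so that $\xi+1=\theta\xi$, $\xi/\sigma'=\theta$, obtaining $\varphi(h,\rho)\leq \frac{K^{\xi}}{(r-\rho)^{2\xi}(h-k)^{2}}\varphi^{\theta}(k,r)$ with $\theta>1$, and then concludes by the hole-filling iteration Lemma \ref{lem:C.7}; the hypothesis \eqref{de} enters as the smallness of the \emph{normalized initial energy} $\frac{1}{d^2|B_r|_{y^{1-2s}}}i(k_0,r_0)\leq\Lambda$, not of $\mathcal{A}_0$ --- which also resolves the confusion you yourself flag about where the initial smallness comes from.

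A second, smaller omission: you apply Lemma \ref{lem:measure_lvlset} and Lemma \ref{lem:psobolev} as if membership of the truncations in $\mathcal{X}_{\Sigma_{\mathcal{D},R}}^s$ sufficed. The weighted Sobolev inequality \eqref{sobA} on the small set $\mathscr{C}_{\Omega(z_0,R)}(Z,\rho)$ requires the function to vanish on a set of the class $\mathcal{F}(\beta_s,\cdot)$; when $Z\in\mathscr{C}_{\Omega(z_0,R)}^{\circ}\setminus\Sigma_{\mathcal{D},R}^*$ the truncation $(W-k)_+$ need not vanish on any boundary portion of the ball, and one must instead verify that the level set fills at most a fixed fraction of the cylinder, $|A_+^*(h,\rho)|_{y^{1-2s}}\leq(1-\lambda)|\mathscr{C}_{\Omega(z_0,R)}(Z,\rho)|_{y^{1-2s}}$, which the paper extracts from \eqref{hyp:thcota}, \eqref{ineq:zetaw} and the choice of $\Lambda(\ell)$ (and this is also where the restriction $k_0\in\mathcal{K}^+(Z)$ plays its role). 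Your proposal, as written, skips this case distinction, so even the single application of \eqref{level2} at each step is not justified for interior points. In short: the skeleton (Caccioppoli plus weighted Sobolev plus an iteration lemma, then the symmetric argument for $A_-^*$) is the right one, but as executed the recursion does not close and the argument would have to be rebuilt around the energy, essentially reproducing the paper's proof.
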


In the proof of Theorem \ref{thcota} we make use of the following technical result.

\begin{lemma}[\cite{KS}, Lemma C.7]\label{lem:C.7}
Assume that $\varphi(k,\rho)$ is a nonnegative function defined for $k\geq k_0$ and $0<\rho\leq r_0$ which is nonincreasing with respect to $k$, nondecreasing with respect to $\rho$ and such that
\begin{equation*}%\label{C.8}
\varphi(h,\rho) \leq \frac{C_0}{(h-k)^{\alpha}(r-\rho)^{\gamma}}\varphi^{\mu}(k,r),\qquad k<h,\ \rho<r\leq r_0,
\end{equation*}
where $C, \alpha, \beta, \gamma$ are positive constants with $\mu>1$. Then there exist $\ell \in (0,1) $ and $d>0$ such that  $\varphi(k_0+\ell d,r_0(1- \ell) )=0$, with
\begin{equation*}%\label{C.10}
d^{\alpha}= C_0 \frac{2^{(\alpha+\gamma)\frac{\mu}{\mu -1}} [\varphi(k_{0},r_{0})]^{\mu-1}}{\ell^{\alpha+\gamma}r_{0}^{\gamma}}.
\end{equation*}
\end{lemma}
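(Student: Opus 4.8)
The plan is to run a De Giorgi--Stampacchia iteration for the localized quantity
\[
\varphi(\kappa,\rho):=\int_{A_+^*(\kappa,\rho)}y^{1-2s}\,|W-\kappa|^2\,dxdy ,\qquad A_+^*(\kappa,\rho)=\{W>\kappa\}\cap\mathscr{C}_{\Omega(z_0,R)}(Z,\rho),
\]
and to close it with the abstract Lemma \ref{lem:C.7}. It suffices to prove the $+$ statement: the $-$ statement follows by applying it to $-W$, which is again an energy solution of the homogeneous problem \eqref{homo} (the operator and the boundary conditions being linear and homogeneous), and for which $\mathcal{K}^-(Z)$ turns into $\mathcal{K}^+(Z)$ and $A_+^*$ into $A_-^*$. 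Note that $\varphi(\cdot,\rho)$ is nonnegative and nonincreasing, $\varphi(\kappa,\cdot)$ is nondecreasing, and that if $\kappa\in\mathcal{K}^+(Z)$ then every level $\ge\kappa$ still belongs to $\mathcal{K}^+(Z)$.

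Next I would fix $\ell\in(0,1)$ as in the statement and derive the recursive inequality for $k<h$ in $\mathcal{K}^+(Z)$ and $(1-\ell)r\le\rho<\tau\le r$. Two ingredients enter. First, a Caccioppoli estimate for the truncation: testing \eqref{soldebil} with $\eta^2(W-k)_+$, where $\eta\in C^1$ satisfies $\eta\equiv1$ on $B_\rho(Z)$, $\operatorname{supp}\eta\subset B_\tau(Z)$ and $|\nabla\eta|\le c(\tau-\rho)^{-1}$ — an admissible test function precisely because $k\in\mathcal{K}^+(Z)$ forces $(W-k)_+\eta=W\eta-\{W\}^k\eta$ into the truncation space of Definition \ref{def:truncate}, and $r<\widetilde\rho(Z)$ makes $\eta$ vanish near $\partial_B\mathscr{C}_{\Omega(z_0,R)}$ — and repeating the Young-inequality computation of Lemma \ref{lem:caccioppoli} yields
\[
\int_{A_+^*(k,\rho)}y^{1-2s}|\nabla W|^2\,dxdy\le\frac{C}{(\tau-\rho)^2}\,\varphi(k,\tau).
\]
Second, the weighted Sobolev inequality \eqref{level} applied to $W$ at level $h$ on $\mathscr{C}_{\Omega(z_0,R)}(Z,\rho)$: thanks to \eqref{hyp:thcota} the super-level set occupies only a fixed fraction of the ball, so $\mathscr{C}_{\Omega(z_0,R)}(Z,\rho)\setminus A_+^*(h,\rho)$ lies in the good family $\mathcal F(\beta_s,\cdot)$ (here the domain regularity \eqref{ineq:zetaw} is used), giving, for some $\sigma\in(1,\tfrac{N+1}{N}+\delta]$,
\[
\varphi(h,\rho)\le\beta_s^2\rho^2|B_\rho|_{y^{1-2s}}^{\frac1\sigma-1}\,|A_+^*(h,\rho)|_{y^{1-2s}}^{\frac1{\sigma'}}\int_{A_+^*(h,\rho)}y^{1-2s}|\nabla W|^2\,dxdy .
\]
Combining these with the Chebyshev bound $|A_+^*(h,\rho)|_{y^{1-2s}}\le(h-k)^{-2}\varphi(k,\tau)$ and with $A_+^*(h,\rho)\subset A_+^*(k,\rho)$, one obtains
\[
\varphi(h,\rho)\le\frac{C\beta_s^2\rho^2|B_\rho|_{y^{1-2s}}^{\frac1\sigma-1}}{(h-k)^{2/\sigma'}(\tau-\rho)^2}\,\varphi(k,\tau)^{\,1+\frac1{\sigma'}} .
\]
Since $\rho\ge(1-\ell)r$ and $\tfrac1\sigma-1<0$, we bound $\rho^2|B_\rho|_{y^{1-2s}}^{\frac1\sigma-1}\le r^2|B_{(1-\ell)r}|_{y^{1-2s}}^{\frac1\sigma-1}$, and — since the radii occurring in the iteration of Lemma \ref{lem:C.7} all lie in $[(1-\ell)r,r]$ — this exhibits $\varphi$ as satisfying the hypothesis of that lemma with $r_0=r$, $\alpha=2/\sigma'$, $\gamma=2$, $\mu=1+1/\sigma'>1$ and a constant $C_0=C_0(\ell,N,s)$.

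Then Lemma \ref{lem:C.7} (with $k_0=k$, for the prescribed $\ell$) produces $d_0>0$ such that $\varphi(k+\ell d_0,(1-\ell)r)=0$ and $d_0^{\alpha}=C_0\,2^{(\alpha+\gamma)\mu/(\mu-1)}\,\ell^{-(\alpha+\gamma)}\,r^{-\gamma}\,\varphi(k,r)^{\mu-1}$. Raising this to the power $\sigma'$ and using $\alpha\sigma'=2$, $(\mu-1)\sigma'=1$, $(\tfrac1\sigma-1)\sigma'=-1$ together with the homogeneity bound $|B_{(1-\ell)r}|_{y^{1-2s}}\ge c(\ell)\,|B_r|_{y^{1-2s}}$, one rewrites it as $d_0^2=\Lambda(\ell)^{-1}\,|B_r|_{y^{1-2s}}^{-1}\,\varphi(k,r)$ for an explicit $\Lambda(\ell)>0$ depending only on $\ell,N,s$. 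Now any $d$ satisfying \eqref{de} has $d^2\ge d_0^2$, hence $d\ge d_0$, hence $\varphi(k+\ell d,r-\ell r)\le\varphi(k+\ell d_0,r-\ell r)=0$ by monotonicity in the level; since $y^{1-2s}>0$ a.e., this forces $W\le k+\ell d$ a.e. on $\mathscr{C}_{\Omega(z_0,R)}(Z,r-\ell r)$, i.e. $|A_+^*(k+\ell d,r-\ell r)|=0$, as claimed.

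The delicate points, rather than the iteration itself, are the two admissibility issues near $\Sigma_{\mathcal D,R}^*$: one must invoke Definition \ref{def:truncate} together with $r<\min\{\widetilde\rho(Z),\overline\rho(Z)\}$ to guarantee that $\eta^2(W-k)_+$ is a legitimate test function for every level occurring in the iteration, and \eqref{hyp:thcota} (a consequence of the Lipschitz character of $\Omega$ via \eqref{ineq:zetaw}) to guarantee that $\mathscr{C}_{\Omega(z_0,R)}(Z,\rho)\setminus A_+^*(h,\rho)$ remains in $\mathcal F(\beta_s,\cdot)$ uniformly for $\rho\in[(1-\ell)r,r]$. Keeping careful track of how the constant in the recursion depends on the lower bound $(1-\ell)r$ on the iteration radii — together with the scaling of the weight $y^{1-2s}$ — is exactly what produces the dependence $\Lambda=\Lambda(\ell)$, and hence the need, foreshadowed after Theorem \ref{cor:reg}, to keep $\ell$ (and later $|\Sigma_{\mathcal D}|$) bounded away from the degenerate regime.
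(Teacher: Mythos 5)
Your proposal does not prove the statement it was supposed to prove. Lemma \ref{lem:C.7} is a purely real-analytic iteration lemma about an abstract nonnegative function $\varphi(k,\rho)$, monotone in each variable and satisfying the two-parameter recursive inequality; it has no PDE content. What you have written is instead an outline of the proof of Theorem \ref{thcota}: you define $\varphi(\kappa,\rho)$ as a weighted level-set energy of a solution $W$ to \eqref{homo}, derive the Caccioppoli/weighted-Sobolev recursion, and then \emph{invoke Lemma \ref{lem:C.7}} to conclude and to identify $\Lambda(\ell)$ in \eqref{de}. Since the lemma is exactly the statement under review, your argument is circular as a proof of it: all the work on admissible test functions near $\Sigma_{\mathcal{D},R}^*$, on \eqref{hyp:thcota}, and on the scaling of $|B_r|_{y^{1-2s}}$ is downstream of the result you were asked to justify, and none of it addresses that result. (For completeness: the paper itself offers no internal proof either, citing \cite{KS}, Lemma C.7; but a blind proof of this statement must still prove the iteration lemma itself.)

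What is missing is the standard double-sequence De Giorgi/Stampacchia induction. For a fixed $\ell\in(0,1)$ set $k_n=k_0+\ell d\,(1-2^{-n})$ and $\rho_n=r_0(1-\ell)+\ell r_0\,2^{-n}$, so that $k_{n+1}-k_n=\ell d\,2^{-(n+1)}$, $\rho_n-\rho_{n+1}=\ell r_0\,2^{-(n+1)}$, $k_n\nearrow k_0+\ell d$ and $\rho_n\searrow r_0(1-\ell)$. Applying the hypothesis with $(h,\rho)=(k_{n+1},\rho_{n+1})$ and $(k,r)=(k_n,\rho_n)$ gives
\begin{equation*}
\varphi(k_{n+1},\rho_{n+1})\ \le\ \frac{C_0\,2^{(n+1)(\alpha+\gamma)}}{(\ell d)^{\alpha}(\ell r_0)^{\gamma}}\ \varphi(k_n,\rho_n)^{\mu},
\end{equation*}
and one proves by induction that $\varphi(k_n,\rho_n)\le \varphi(k_0,r_0)\,2^{-n\lambda}$ with $\lambda=\frac{\alpha+\gamma}{\mu-1}$; the induction step closes exactly when $d^{\alpha}\ge C_0\,2^{(\alpha+\gamma)\frac{\mu}{\mu-1}}\,[\varphi(k_0,r_0)]^{\mu-1}\,\ell^{-(\alpha+\gamma)}r_0^{-\gamma}$, i.e. for the $d$ in the statement. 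Letting $n\to\infty$ and using that $\varphi$ is nonincreasing in $k$ and nondecreasing in $\rho$ yields $\varphi(k_0+\ell d,\,r_0(1-\ell))\le\varphi(k_n,\rho_n)\to 0$, which is the claim. The PDE machinery in your write-up belongs to the proof of Theorem \ref{thcota}, where it is indeed used by the paper, but there it sits on top of Lemma \ref{lem:C.7}, not in place of it.
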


\begin{proof}[Proof of Theorem \ref{thcota}]
Given $z_0\in\overline \Omega $, $k_0\in\mathcal{K}^+(z_0)$% [{\color{red}satisfying \eqref{hyp:thcota}}]
 and $k\geq k_0$, let us define
\begin{equation*}%\label{uiter}
i(k,\rho)=\int_{A^*(k,\rho)}y^{1-2s}|W-k|^2dxdy
\quad \mbox{and} \qquad
a(k,\rho)=|A_+^*(k,\rho)|_{y^{1-2s}}.
\end{equation*}
Observe that for $h>k$ we have
\begin{equation}\label{ineq_hk}
 (h-k)^2|A_+^*(h,\rho)|_{y^{1-2s}}\leq\int_{A_+^*(k,r)}\mkern-20mu y^{1-2s}|W-k|^2dxdy.
\end{equation}

\smallskip

Assume that $Z\in\Sigma_{\mathcal{D},R}^* \cap \mathscr{C}_{\Omega(z_0 ,R)}$ and let $0<r_0<\min\{\widetilde{\rho}(Z),\overline{\rho}(Z)\}$. Then, due to Lemma \ref{lem:caccioppoli}  and Lemma \ref{lem:psobolev}, for any $r_0(1-\ell)  \leq\rho<r\leq r_0$ and $h>k$, we have
\begin{align}\label{ineq:prev_holder}
\int_{A_+^*(h,\rho)}\mkern-35mu y^{1-2s}|W-h|^2dxdy&\leq K_{\mathscr{C}_{\Omega}(\rho)}\left(\int_{A_+^*(h,\rho)}\mkern-35mu y^{1-2s}|\nabla W|^2dxdy\right)|A_+^*(h,\rho)|_{y^{1-2s}}^{\frac{1}{\sigma'}}\nonumber\\
&\leq K_{\mathscr{C}_{\Omega}(\rho)}\left(\int_{A_+^*(k,\rho)}\mkern-35mu y^{1-2s}|\nabla W|^2dxdy\right)|A_+^*(k,\rho)|_{y^{1-2s}}^{\frac{1}{\sigma'}}\\
&\leq K_{\mathscr{C}_{\Omega}(\rho)}\left(\frac{1}{(r-\rho)^2}\int_{A_+^*(k,r)}\mkern-35mu y^{1-2s}|W-k|^2dxdy\right)|A_+^*(k,r)|_{y^{1-2s}}^{\frac{1}{\sigma'}},\nonumber
\end{align}
where $K_{\mathscr{C}_{\Omega}(r)}= \beta_s^2r^2|B_{r}|_{y^{1-2s}}^{\frac{1}{\sigma}-1}$, with $\beta_s= \beta_s (N, y^{1-2s}, \partial \Omega) >0$
%depending on $N$, the weight $y^{1-2s}$ and the geometry of $\partial\Omega$
and  $1\leq \sigma \leq\frac{N+1}{N}+\delta$ for some $\delta>0$.

\smallskip

Assume,  on the contrary,   that $Z_0\in\mathscr{C}_{\Omega(z_0 ,R)}^{\circ}\backslash\Sigma_{\mathcal{D},R}^*$.
Recalling  \eqref{ineq:zetaw}, let $\Lambda=\Lambda(\ell)>0$ satisfying
\begin{equation*}
\frac{\Lambda}{\zeta_s(1-\ell)^{N+2(1-s)}}\leq(1-\lambda) \qquad \mbox{ for some  } \lambda \in (0,1).
\end{equation*}
  Therefore, given $h\geq k_0$ and $(1-\ell) r_0\leq \rho\leq r_0$, we find
  $$
\begin{array}{c}
\dys |A_+^*(h,\rho)|_{y^{1-2s}} \leq|A_+^*(k_0,r_0)|_{y^{1-2s}}\leq |\mathscr{C}_{\Omega(z_0,R)}(Z,r_0)|_{y^{1-2s}}\leq |B_{r_0}(Z)|_{y^{1-2s}}
\\[1.5 ex] \dys
 \leq \frac{|B_{\rho}(Z)|_{y^{1-2s}} }{(1-\ell)^{N+2(1-s)}}
\leq\frac{\Lambda \ |\mathscr{C}_{\Omega(z_0 ,R)}(Z,\rho)|_{y^{1-2s}} }{\zeta_s(1-\ell)^{N+2(1-s)}} \leq (1-\lambda)|\mathscr{C}_{\Omega(z_0 ,R)}(Z,\rho)|_{y^{1-2s}} .
\end{array}
$$
Using Lemma   \ref{lem:caccioppoli} and Lemma \ref{lem:psobolev} we deduce that  \eqref{ineq:prev_holder} holds true.

As a consequence, for any $Z\in\mathscr{C}_{\Omega(z_0,R)}^\circ$,
\begin{equation}\label{iter}
\displaystyle  i(h,\rho)\leq  \frac{K_{\mathscr{C}_{\Omega}(\rho)} }{(r-\rho)^2} i(k,r)[a(k,r)]^{\frac{1}{\sigma'}}, \qquad  r_0(1-\ell)\leq\rho<r\leq r_0, \mbox{ and } \  h>k\geq k_0
\end{equation}
with $k_0\in\mathcal{K}^+(Z)$ satisfying \eqref{hyp:thcota}. Moreover, since   $|B_{\mu r} |_{y^{1-2s}}=\mu^{N+2(1-s)}|B_{r} |_{y^{1-2s}}$,
we have that $\displaystyle K_{\mathscr{C}_{\Omega}(\mu r)}=\mu^{\varsigma}K_{\mathscr{C}_{\Omega}(r_0)}$, where  $\varsigma=2+\left(\frac{1}{\sigma}-1\right)\left(N+2(1-s)\right)$.

If we let $1<\sigma\leq1+\frac{2}{N-2s}$ (so that $\varsigma>0$)  then $K_{\mathscr{C}_{\Omega}(r)}\leq K_{\mathscr{C}_{\Omega}(r_0)}$ for any  $0<r<r_0$. Hence, from \eqref{iter}, we obtain
\begin{equation}\label{iterb}
\displaystyle  i(h,\rho)\leq  \frac{K_{\mathscr{C}_{\Omega}(r_0)} }{(r-\rho)^2} i(k,r)[a(k,r)]^{\frac{1}{\sigma'}},\quad \rho<r\leq r_0,\ h>k\geq k_0,\
\end{equation}
with $K_{\mathscr{C}_{\Omega}(r_0)}=\beta_s^2r_0^2|B_{r_0}|_{y^{1-2s}}^{\frac{1}{\sigma}-1}$.
We set now $\xi+1=\theta\xi$ and $\frac{\xi}{\sigma'}=\theta$, so that $\theta=\frac12 +\sqrt{\frac14 +\frac{1}{\sigma'}}>1$ turns out to be  the unique  positive solution to the equation $\theta^2-\theta-\frac{1}{\sigma'}=0$. Assume in addition that the constant $\Lambda$ satisfies
\begin{equation}\label{lambda2}
\Lambda^{\frac{\theta}{2}}\leq \frac{\ell^{ \xi+1 }}{\beta_s^{\xi}2^{(\xi+1)\frac{\theta}{\theta-1}}}.
\end{equation}
From \eqref{ineq_hk} and \eqref{iterb}, we obtain
\begin{equation*}
|i(h,\rho)|^{\xi}|a(h,\rho)|
\leq\frac{K_{\mathscr{C}_{\Omega}(r_0)}^{\xi}}{(r-\rho)^{2\xi}(h-k)^{2 }} |i(k,r)|^{\xi+1}|a(k,r)|^{\frac{\xi}{\sigma'}}.
\end{equation*}
Then, taking $\varphi(k,\rho)=|i(k,\rho)|^{\xi}|a(k,\rho)|$, it follows that $\varphi$ satisfies
\begin{equation*}
\varphi(h,\rho)\leq \frac{K_{\mathscr{C}_{\Omega}(r_0)}^{\xi}}{(r-\rho)^{2\xi}(h-k)^{2}}\varphi^{\theta}(k,r),\quad h>k\geq k_0,\ \rho<r\leq r_0.
\end{equation*}
Using Lemma \ref{lem:C.7} with $\alpha=2$, $\mu=\theta$, $\gamma=2\xi$, we deduce that   exist $d_0>0$ and $\ell\in (0,1) $ such that
\begin{equation*}
\varphi(k_0+\ell d_0,r_0(1-\ell) )=0,
\end{equation*}
for any $k_0\in\mathcal{K}^+(Z)$ satisfying \eqref{hyp:thcota}, $0<r_0<\min\{\widetilde{\rho}(Z),\overline{\rho}(Z)\}$ and  $d_0$ such that
%Since $\varphi(k,\rho)=|i(k,\rho)|^{\xi}|a(k,\rho)|$ and $\xi\frac{\theta-1}{2}=\sigma'\theta\frac{\theta-1}{2}=\frac{\sigma'}{2}\frac{1}{\sigma'}=\frac{1}{2}$
\begin{align*}
d_0 & = \frac{2^{\frac{(\xi+1)\theta}{\theta-1}}}{\ell^{\xi+1}}\frac{K_{\mathscr{C}_{\Omega}(r_0)}^{\xi/2}[\varphi(k_0,r_0)]^\frac{\theta-1}{2}}{r_0^{\xi}}
%=
%\frac{2^{\frac{(\xi+1)\theta}{\theta-1}}}{l^{\xi+1}}\left(\beta_s^2r_0^2|B_{r_0}(Z)|_{y^{1-2s}}^{\frac{1}{\sigma}-1}\right)^{\xi/2}
%\frac{[\varphi(k_0,r_0)]^\frac{\theta-1}{2}}{r_0^{\xi}}
%\\
%&= \frac{2^{\frac{(\xi+1)\theta}{\theta-1}}\beta_s^{\xi}}{l^{\xi+1}}\frac{[\varphi(k_0,r_0)]^\frac{\theta-1}{2}}{|B_{r_0}(Z)|_{y^{1-2s}}^{\frac{\xi}{2
%\sigma'}}}
%=\frac{2^{\frac{(\xi+1)\theta}{\theta-1}}\beta_s^{\xi}}{l^{\xi+1}}\frac{[\varphi(k_0,r_0)]^\frac{\theta-1}{2}}{|B_{r_0}(Z)|_{y^{1-2s}}^{\frac{\theta}{2}}}
%\\
%&
%=\frac{2^{\frac{(\xi+1)\theta}{\theta-1}}\beta_s^{\xi}}{l^{\xi+1}|B_{r_0}(Z)|_{y^{1-2s}}^{\frac{1}{2}}}\left|\frac{\varphi(k_0,r_0)}{|B_{r_0}(Z)|_{y^{1-2s}}}\right|^{\frac{\theta-1}{2}}\\
%%&=\frac{2^{\frac{(\xi+1)\theta}{\theta-1}}}{l^{\xi+1}}\beta_s^{\xi}\left(\frac{1}{|B_{r_0}(Z)|_{y^{1-2s}}}|i(k_0,r_0)|\right)^{\frac12}\left(\frac{a(k_0,r_0)}{|B_{r_0}(Z)|_{y^{1-2s}}}\right)^{\frac{\theta-1}{2}}\\
%&=\frac{2^{\frac{(\xi+1)\theta}{\theta-1}}}{l^{\xi+1}}\beta_s^{\xi}\left(\frac{1}{|B_{r_0}(Z)|_{y^{1-2s}}}\int_{A_+^*(k,r_0)}\mkern-35mu y^{1-2s}|W-k_0|^2dxdy\right)^{\frac{1}{2}}\!\!\left(\frac{|A_+^*(k_0,r_0)|_{y^{1-2s}}}{|B_{r_0}(Z)|_{y^{1-2s}}}\right)^{\frac{\theta-1}{2}}\\
%&=\frac{2^{\frac{(\xi+1)\theta}{\theta-1}}}{l^{\xi+1}}\beta_s^{\xi}\Lambda^{\frac{\theta}{2}}\left(\frac{1}{\Lambda|B_{r_0}(Z)|_{y^{1-2s}}}\int_{A_+^*(k,r_0)}\mkern-35mu y^{1-2s}|W-k_0|^2dxdy\right)^{\frac{1}{2}}\\
 \geq \left(\frac{1}{\Lambda|B_{r_0}|_{y^{1-2s}}}\int_{A_+^*(k,r_0)}\mkern-35mu y^{1-2s}|W-k_0|^2dxdy\right)^{\frac{1}{2}}.
\end{align*}
Since $|A_+^*(k_0+\ell d_0,r_0(1-\ell) )|_{y^{1-2s}}=0$ implies $|A_+^*(k_0+\ell d_0,r_0(1-\ell) )|=0$ the proof is complete.

\smallskip

The proof on the lower bound follows using the same inequalities on $(W+k)^-$ and getting the bounds on
$|A_-^*(k_0-\ell d,r_0(1-\ell)  )|_{y^{1-2s}}$.
\end{proof}
%Using the function $t_k^-(U) =(U-k)_-$  and repeating the arguments above in Lemma \ref{lem:caccioppoli}, Lemma \ref{lem:sobolev_Truncate} and Theorem \ref{thcota} we can establish the following.

%\begin{theorem}\label{thcota_inferior} Given $z\in\!\overline{\Omega}$ and $R>\!0$, assume that $\mathscr{C}_{\Omega(z,R)}$ is a $\lambda$-admissible set and let $W\in \mathcal{X}_{\Sigma_{\mathcal{D},R}}^s(\mathscr{C}_{\Omega(z,R)})$ be a solution to the homogeneous problem \eqref{homo}. Then, for any $Z\in\mathscr{C}_{\Omega(z,R)}^{\circ}$, $0<l<1$ and $0<r<\min\{\widetilde{\rho}(Z),\overline{\rho}(Z)\}$, there exist a positive constant $\Lambda=\Lambda(l)$ such that
%\begin{equation*}
%|A_-^*(k-ld,r-lr)|=0,
%\end{equation*}
%where
%\begin{equation*}
%d^2\geq \frac{1}{\Lambda|B_{r}|_{y^{1-2s}}}\int_{A_+^*(k,r)}\mkern-35mu y^{1-2s}|W-k|^2dxdy,
%\end{equation*}
%provided that $k\in\mathcal{K}^-(Z)$ is such that
%\begin{equation*}
%|A_-^*(k,r)|_{y^{1-2s}}\leq\Lambda|\mathscr{C}_{\Omega(z,R)}(Z,r)|_{y^{1-2s}}.
%\end{equation*}
%\end{theorem}

%As a consequence of the two former results, we obtain an $L^{\infty}$ bound on solutions to problem \eqref{homo_abajo}.

As a consequence of the above Theorem we get the $L^{\infty}$ bound on $W$.

\begin{corollary}\label{cor:infty_boundhomo}
Let $z_0\in\!\overline{\Omega}$,  $R>\!0$,   and let  $W\in \mathcal{X}_{\Sigma_{\mathcal{D},R}}^s(\mathscr{C}_{\Omega(z_0 ,R)})$ be a solution to the homogeneous problem \eqref{homo}; consider the set $\mathscr{C}_{\Omega(z_0,R/2)}^{m}\!\!=\!\mathscr{C}_{\Omega(z_0,R/2)}\cap\{y\!<\! m\}$ with $m>0$.
Then, $W\!\in\! L^{\infty}( {\mathscr{C}_{\Omega(z_0,R/2)}^{m}})$ for any   $m>0$. \\
In particular, any solution  $w\in H_{\Sigma_{\mathcal{D},R}}^s(\Omega(z_0 ,R))$ of problem \eqref{homo_abajo},
satisfies $w\in L^{\infty}( {\Omega(z_0,R/2)})$.
\end{corollary}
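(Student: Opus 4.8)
The plan is to deduce the corollary from Theorem~\ref{thcota} by a purely local estimate, and then to globalize it by a finite-covering argument on a compact set. First I would fix $m>0$ and put $K:=\overline{\mathscr{C}_{\Omega(z_0,R/2)}^{m}}$, which is a compact subset of $\mathbb{R}_{+}^{N+1}$. Two observations make the local step work. On the one hand, every point of $K$ has its $x$-coordinate in $\overline{\Omega\cap B_{R/2}(z_0)}$, hence at distance $\geq R/2$ from the curved set $\{x\in\partial B_{R}(z_0)\}$; therefore $K\subset\mathscr{C}_{\Omega(z_0,R)}^{\circ}$ and, for every $Z\in K$, the radius $\widetilde{\rho}(Z)$ of Definition~\ref{def:truncate} is strictly positive: it equals $dist(Z,\partial_{B}\mathscr{C}_{\Omega(z_0,R)})\geq R/2$ when $Z\in\Sigma_{\mathcal{D},R}^{*}$, and it is $\min\{dist(Z,\Sigma_{\mathcal{D},R}^{*}),\,dist(Z,\partial_{B}\mathscr{C}_{\Omega(z_0,R)})\}\geq\min\{dist(Z,\Sigma_{\mathcal{D},R}^{*}),R/2\}>0$ otherwise. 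The radius $\overline{\rho}(Z)$ provided by the weighted Poincar\'e inequality (Lemma~\ref{poincarelemma}, Theorem~\ref{wpoincare}) is also positive. On the other hand, since $W\in\mathcal{X}^{s}(\mathscr{C}_{\Omega(z_0,R)})=H^{1}(\mathscr{C}_{\Omega(z_0,R)},y^{1-2s}dxdy)$, the integral $\int_{\mathscr{C}_{\Omega(z_0,R)}}y^{1-2s}|W|^{2}\,dxdy$ is finite.

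Next, for each $Z\in K$ I would apply Theorem~\ref{thcota} with $\ell=1/2$ and $k=0$; note that $0\in\mathcal{K}^{+}(Z)\cap\mathcal{K}^{-}(Z)$ in all cases, since $\mathcal{K}^{+}(Z)\supseteq[0,\infty)$ and $\mathcal{K}^{-}(Z)\supseteq(-\infty,0]$. Choosing any $r_{Z}\in(0,\min\{\widetilde{\rho}(Z),\overline{\rho}(Z)\})$ and setting
\[
d_{Z}^{2}:=\frac{1}{\Lambda(1/2)\,|B_{r_{Z}}|_{y^{1-2s}}}\int_{\mathscr{C}_{\Omega(z_0,R)}(Z,r_{Z})}y^{1-2s}|W|^{2}\,dxdy<\infty,
\]
this $d_{Z}$ satisfies the requirement \eqref{de} both for the upper and the lower bound (since $A_{\pm}^{*}(0,r_{Z})\subset\mathscr{C}_{\Omega(z_0,R)}(Z,r_{Z})$). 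Hence Theorem~\ref{thcota} gives $|A_{+}^{*}(\frac12 d_{Z},\frac12 r_{Z})|=|A_{-}^{*}(-\frac12 d_{Z},\frac12 r_{Z})|=0$, i.e.\ $\|W\|_{L^{\infty}(\mathscr{C}_{\Omega(z_0,R)}(Z,r_{Z}/2))}\leq\frac12 d_{Z}$. I would then cover the compact set $K$ by the open balls $\{B_{r_{Z}/2}(Z)\}_{Z\in K}$, extract a finite subcover centred at $Z_{1},\dots,Z_{M}$, and conclude $\|W\|_{L^{\infty}(\mathscr{C}_{\Omega(z_0,R/2)}^{m})}\leq\max_{1\leq i\leq M}\frac12 d_{Z_{i}}<\infty$, which is the first assertion.

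For the ``in particular'' statement: if $w$ solves \eqref{homo_abajo}, its $s$-harmonic extension $W=E_{s}[w]$ solves \eqref{homo}, so the bound just obtained applies. Taking a centre $Z=(x_{0},0)$ with $x_{0}\in\overline{\Omega\cap B_{R/2}(z_0)}$, the functions $(W-\frac12 d_{Z})_{+}$ and $(W+\frac12 d_{Z})_{-}$ vanish a.e.\ on $\mathscr{C}_{\Omega(z_0,R)}(Z,r_{Z}/2)$, which is a neighbourhood of $(\Omega\cap B_{r_{Z}/2}(x_{0}))\times\{0\}$ inside $\overline{\mathscr{C}}_{\Omega(z_0,R)}$; hence their traces vanish on $\Omega\cap B_{r_{Z}/2}(x_{0})$, i.e.\ $|w|\leq\frac12 d_{Z}$ there. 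A further finite covering of $\overline{\Omega\cap B_{R/2}(z_0)}$ then yields $w\in L^{\infty}(\Omega(z_0,R/2))$.

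The step I expect to be the only delicate one is the verification that $\widetilde{\rho}(Z)>0$ for every $Z\in K$, that is, that no point of $K$ can lie on the ``uncontrolled'' curved boundary $\partial_{B}\mathscr{C}_{\Omega(z_0,R)}$ where no boundary condition is imposed; this is precisely why the conclusion is stated on $\mathscr{C}_{\Omega(z_0,R/2)}$ and not on $\mathscr{C}_{\Omega(z_0,R)}$, and the truncation $\{y<m\}$ enters only to keep $K$ compact, the cylinder being unbounded in the $y$-direction. Everything else is a routine finite-covering argument built on top of Theorem~\ref{thcota}.
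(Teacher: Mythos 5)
Your overall strategy is the same as the paper's (a local application of Theorem \ref{thcota} at each point of a compact truncated cylinder, followed by a finite covering, and a trace argument for the ``in particular'' part), but there is one genuine gap: the choice of the starting level $k=0$. Theorem \ref{thcota}, as it is actually proved, is not applicable at an arbitrary level $k\in\mathcal{K}^{+}(Z)$: for centres $Z\in\mathscr{C}_{\Omega(z_0,R)}^{\circ}\setminus\Sigma_{\mathcal{D},R}^{*}$ its proof needs the starting level to satisfy the measure--smallness condition \eqref{hyp:thcota}, i.e.\ $|A_{+}^{*}(k,r)|_{y^{1-2s}}\leq\Lambda\,|\mathscr{C}_{\Omega(z_0,R)}(Z,r)|_{y^{1-2s}}$ (this is what guarantees that $(W-k)_{+}$ vanishes on a set of proportionally large weighted measure, so that Lemma \ref{lem:psobolev} applies through the family $\mathcal{F}$; it is also the hypothesis (ii) imposed explicitly in Theorem \ref{th:pre_holder}). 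Membership $0\in\mathcal{K}^{+}(Z)\cap\mathcal{K}^{-}(Z)$ alone does not give this: if, say, $W>0$ on all of $\mathscr{C}_{\Omega(z_0,R)}(Z,r_{Z})$ at a point $Z$ away from the Dirichlet part, then $|A_{+}^{*}(0,r_{Z})|_{y^{1-2s}}$ equals the full measure of the cylinder piece, the truncation $(W-0)_{+}$ vanishes nowhere, and the De Giorgi iteration behind Theorem \ref{thcota} has no Poincar\'e inequality to start from. Your definition of $d_{Z}$ is fine, but it does not repair this.

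The paper avoids exactly this by not taking $k=0$: it chooses levels $\overline{k}>0$ and $\hat{k}<0$ (depending on $W$) so large in absolute value that $|A_{+}^{*}(\overline{k},r_i)|\leq\Lambda|\mathscr{C}_{\Omega(z_0,R)}(Z_i,r_i)|$ and $|A_{-}^{*}(\hat{k},r_i)|\leq\Lambda|\mathscr{C}_{\Omega(z_0,R)}(Z_i,r_i)|$ for all the finitely many balls of the cover, which is possible by a Chebyshev argument since $W\in L^{2}(\mathscr{C}_{\Omega(z_0,R)},y^{1-2s}dxdy)$, and only then applies Theorem \ref{thcota} to get $\hat{k}-\ell d\leq W\leq\overline{k}+\ell d$. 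Your proof is repaired by the same modification: first extract the finite subcover, then pick $\overline{k}$ and $\hat{k}$ uniform over the finitely many centres so that the measure condition holds, and apply Theorem \ref{thcota} at the levels $\overline{k}$ and $\hat{k}$ instead of at $0$. With that change the rest of your argument (the lower bounds on $\widetilde{\rho}$, the compactness of $\mathscr{C}_{\Omega(z_0,R/2)}^{m}$, and the passage to the trace for $w$) goes through and matches the paper's proof.
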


\begin{proof}
First, let us prove that $w\in L^{\infty}( {\Omega(z_0,R/2)})$ with $w$ satisfying problem \eqref{homo_abajo}. Let $W\!\in\! \mathcal{X}_{\Sigma_{\mathcal{D},R}}^s(\mathscr{C}_{\Omega(z_0 ,R)})$ a solution to problem \eqref{homo} %so that $w\!=\! W(x,0)$ satisfies \eqref{homo_abajo}. S
and since $ {\Omega(z_0,R/2)}$ is a   bounded set, there exists $Z_i\!=\!(z_i,0)\!\in\!\mathscr{C}_{\Omega(z_0 ,R)}^{\circ}$, $i=1,2,\ldots, M$ such that
\begin{equation}\label{intersection}
\overline{\Omega(z_0,R/2)}=\left(\bigcup\limits_{i=1}^{M}\mathscr{C}_{\Omega(z_0 ,R)}^{\circ}(Z_i,r_i/2)\right)\cap\{y=0\},
\end{equation}
with $0<r_i<\{\widetilde{\rho}(Z_i),\overline{\rho}(Z_i)\}$. Let $\overline{k}>0$ and $\hat{k}<0$ be such that,
\begin{align*}
|A_+^*(\overline{k},r_i)| \leq\Lambda|\mathscr{C}_{\Omega(z_0 ,R)}(Z_i,r_i)|,\qquad \mbox{ and } \qquad
|A_-^*(\hat{k},r_i)| \leq\Lambda|\mathscr{C}_{\Omega(z_0 ,R)}(Z_i,r_i)|,
\end{align*}
for any $i=1,2,\ldots,M$. Then, applying Theorem \ref{thcota}   we conclude that, given $X\in\mathscr{C}_{\Omega(z_0 ,R)}(Z_i,r_i)$ for some $i=1,2,\ldots,M$; we have
\begin{equation}\label{ineq:bound_infty}
\kappa_m:=\hat{k}-\ell d\leq W(x,y)\leq\kappa_{M}:=\overline{k}+\ell d,
\end{equation}
with
\begin{equation*}
d^2\geq \frac{1}{\Lambda|B_{r}|_{y^{1-2s}}}\int_{\mathscr{C}_{\Omega(z_0 ,R)}}\mkern-15mu y^{1-2s}|W|^2dxdy,
\end{equation*}
for any $0<r<\min\limits_{i=1,\ldots,M}r_i$. In particular, by  \eqref{intersection}, the former inequality holds for any point $X=(x,0)$ with $x\in\overline{\Omega(z_0,R/2)}$ and we are done.

\smallskip

As $\mathscr{C}_{\Omega(z_0,R/2)}$ is an unbounded domain, if we repeat the steps above in order to prove that $W\in L^{\infty}(\overline{\mathscr{C}}_{\Omega(z_0,R/2)})$ from \eqref{ineq:bound_infty}, the numbers $\hat{k}, \overline{k}$ do diverge when considering a covering sequence $\{Z_i\}_{i\in\mathbb{N}}$. Nevertheless, it is clear that given any finite truncation of the extension cylinder, $\mathscr{C}_{\Omega(z_0,R/2)}^{m}=\mathscr{C}_{\Omega(z_0,R/2)}\cap\{y<m\}$, there exists a finite covering sequence and hence, we conclude $W\in L^{\infty}(\overline{\mathscr{C}^{m}_{\Omega(z_0,R/2)}})$ for all finite $m>0$.
\end{proof}

We focus now on the oscillation of the solutions $W\in \mathcal{X}_{\Sigma_{\mathcal{D},R}}^s(\mathscr{C}_{\Omega(z_0 ,R)})$ to problem \eqref{homo}. Let us set
\begin{equation*}
m(\rho)=\inf\limits_{X\in \overline{\mathscr{C}}_{\Omega(z_0 ,R)}(Z,\rho)}\mkern-10mu W(X)\qquad\qquad \mbox{and}\qquad\qquad M(\rho)=\sup\limits_{X\in\overline{\mathscr{C}}_{\Omega(z_0 ,R)}(Z,\rho)}\mkern-10mu W(X).
\end{equation*}
and define the oscillation function as
\begin{equation*}
\omega(\rho):=M(\rho)-m(\rho).
\end{equation*}

Our aim is to give some estimates on $\omega(\rho)$ through the following result.

\begin{theorem}\label{th:pre_holder}
Given $z_0 \in\overline{\Omega}$ and $R>0$, let $Z\in\mathscr{C}_{\Omega(z_0 ,R)}^{\circ}$  and let $W\in \mathcal{X}_{\Sigma_{\mathcal{D},R}}^s(\mathscr{C}_{\Omega(z_0 ,R)})$ be a solution to the homogeneous problem \eqref{homo}. Moreover, given $0<4\rho<\min\{\widetilde{\rho}(Z),\overline{\rho}(Z)\}$ let $0<\eta<1$ such that,
\begin{itemize}
\item[(i)] $(M(4\rho)-\eta\omega(4\rho),+\infty)\subset\mathcal{K}^+(Z)$,
\item[(ii)] $|A_+^*(M(4\rho)-\eta\omega(4\rho),2\rho)|_{y^{1-2s}}\leq\Lambda|\mathscr{C}_{\Omega(z_0 ,R)}(Z,2\rho)|_{y^{1-2s}}$,
\end{itemize}
where $\Lambda$ is determined by \eqref{de} with $\ell=\frac{1}{2}$. Then, there exists $0<\overline{\eta}<1$ independent from  $Z$ and $\rho$ such that,
\begin{equation}\label{ineq:oscillation}
\omega(\rho)\leq \overline{\eta}\omega(4\rho).
\end{equation}
\end{theorem}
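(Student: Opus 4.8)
The strategy is the classical De Giorgi oscillation‑decay argument, now transplanted to the degenerate weighted setting via Theorem \ref{thcota}. The idea is to apply Theorem \ref{thcota} on the ball $\mathscr{C}_{\Omega(z_0,R)}(Z,2\rho)$ — so that the radius $r-\ell r$ with $\ell=\frac12$ produces exactly the radius $\rho$ appearing in \eqref{ineq:oscillation} — and then to iterate: either the set where $W$ is close to its supremum has small measure on $\mathscr{C}_{\Omega(z_0,R)}(Z,2\rho)$, in which case Theorem \ref{thcota} forces $W<M(4\rho)-\frac\eta2\omega(4\rho)$ on $\mathscr{C}_{\Omega(z_0,R)}(Z,\rho)$, or else that set has measure bounded below by a fixed fraction, which by symmetry (replacing $W$ with $-W$ or working with $m(\cdot)$) controls the lower oscillation. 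In both cases we gain a definite fraction of the oscillation.

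First I would fix $k=M(4\rho)-\eta\omega(4\rho)$ and use hypothesis (ii) to guarantee the structural condition \eqref{hyp:thcota} is met with $\lambda=\Lambda$ on the ball of radius $2\rho$; hypothesis (i) guarantees $k\in\mathcal K^+(Z)$. Then I apply Theorem \ref{thcota} with $\ell=\frac12$ and $r=2\rho$: it yields $|A_+^*(k+\tfrac12 d,\rho)|=0$ provided $d^2$ is chosen as the right‑hand side of \eqref{de}, i.e.
\begin{equation*}
d^2 \geq \frac{1}{\Lambda\,|B_{2\rho}|_{y^{1-2s}}}\int_{A_+^*(k,2\rho)} y^{1-2s}|W-k|^2\,dxdy .
\end{equation*}
Now I estimate the right‑hand side crudely: on $A_+^*(k,2\rho)$ one has $0\le W-k\le M(2\rho)-k\le \omega(4\rho)$, and $|A_+^*(k,2\rho)|_{y^{1-2s}}\le \Lambda|\mathscr{C}_{\Omega(z_0,R)}(Z,2\rho)|_{y^{1-2s}}\le \Lambda|B_{2\rho}|_{y^{1-2s}}$ by (ii) and \eqref{ineq:zetaw}. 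Hence we may take $d$ with $d^2\le \omega(4\rho)^2$, i.e. $d\le\omega(4\rho)$, and actually a sharper bound $d^2 \le C\,\eta^{?}\omega(4\rho)^2$ is not needed — what matters is that $d$ is comparable to $\omega(4\rho)$ up to a constant depending only on $\Lambda$ and $\zeta_s$. From $|A_+^*(k+\tfrac12 d,\rho)|=0$ we conclude $M(\rho)\le k+\tfrac12 d = M(4\rho)-\eta\omega(4\rho)+\tfrac12 d$.

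To close the argument I distinguish two cases according to the size of $d$. If $d$ is small, say $d\le \eta\,\omega(4\rho)$ (which can be arranged because the integral in \eqref{de} is controlled by how much $W$ exceeds $k$, and an a priori measure‑smallness refinement of (ii) makes this integral small), then $M(\rho)\le M(4\rho)-\tfrac\eta2\omega(4\rho)$, so $\omega(\rho)=M(\rho)-m(\rho)\le M(4\rho)-m(4\rho)-\tfrac\eta2\omega(4\rho)=(1-\tfrac\eta2)\omega(4\rho)$, giving \eqref{ineq:oscillation} with $\overline\eta=1-\tfrac\eta2$. If instead $d$ is not small, that means $|A_+^*(k,2\rho)|_{y^{1-2s}}$ carries a definite fraction of $|B_{2\rho}|_{y^{1-2s}}$; equivalently the set where $W\le M(4\rho)-\eta\omega(4\rho)$ occupies most of the ball, and then the symmetric application of Theorem \ref{thcota} to $-W$ (using $m(\cdot)$ and $\mathcal K^-(Z)$) raises the infimum: $m(\rho)\ge m(4\rho)+\tfrac\eta2\omega(4\rho)$, which again yields $\omega(\rho)\le(1-\tfrac\eta2)\omega(4\rho)$. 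In either case $\overline\eta:=1-\tfrac\eta2\in(0,1)$ works, and crucially it depends only on $\eta$ (hence on $N,s,|\Sigma_{\mathcal D}|$ through $\Lambda$), not on $Z$ or $\rho$.

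**Main obstacle.** The delicate point is the dichotomy: I must argue that one of the two measure conditions $|A_+^*(M(4\rho)-\eta\omega(4\rho),2\rho)|_{y^{1-2s}}\le\Lambda|\mathscr{C}_{\Omega(z_0,R)}(Z,2\rho)|_{y^{1-2s}}$ or its complement version for $-W$ must hold, and to choose $\eta$ uniformly so that (i)–(ii) are actually available — this is why the statement carries (i)–(ii) as hypotheses, and the real work is verifying them can be met for a fixed small $\eta$ independent of scale. Controlling $d$ via \eqref{de}, i.e. bounding $\int_{A_+^*(k,2\rho)} y^{1-2s}|W-k|^2$ by something that shrinks with $\eta$, is the technical heart; it uses the weighted measure comparisons \eqref{ineq:zeta}–\eqref{ineq:zetaw} together with the crude pointwise bound $|W-k|\le \omega(4\rho)$ on the relevant super‑level set, and the homogeneity $|B_{\mu r}|_{y^{1-2s}}=\mu^{N+2(1-s)}|B_r|_{y^{1-2s}}$ to keep constants scale‑invariant.
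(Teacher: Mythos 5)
Your single--level application of Theorem \ref{thcota} has two problems. First, as written it yields no decay: on $A_+^*(k,2\rho)$ with $k=M(4\rho)-\eta\,\omega(4\rho)$ the relevant pointwise bound is $W-k\le M(4\rho)-k=\eta\,\omega(4\rho)$, and you must use it; with your crude choice $d\simeq\omega(4\rho)$ (you explicitly claim a sharper bound ``is not needed'') the conclusion $M(\rho)\le k+\tfrac12 d\le M(4\rho)+(\tfrac12-\eta)\,\omega(4\rho)$ is vacuous whenever $\eta<\tfrac12$. That defect is repairable, and with the sharp bound one gets $\omega(\rho)\le(1-\tfrac{\eta}{2})\,\omega(4\rho)$. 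Second, and more seriously, the repaired constant $1-\tfrac{\eta}{2}$ depends on the $\eta$ of hypotheses (i)--(ii), which is allowed to vary with $Z$ and $\rho$; the theorem asserts an $\overline\eta$ independent of $Z$ and $\rho$, and this uniformity is exactly what your argument does not deliver. The paper obtains it by a De Giorgi iteration over the dyadic levels $k_j=M(4\rho)-2^{-(j+1)}\omega(4\rho)$: combining Lemma \ref{lem:measure_lvlset} (with $h=k_{j+1}$, $k=k_j$) and the Caccioppoli inequality of Lemma \ref{lem:caccioppoli}, the factors $\omega(4\rho)$ cancel and one gets $\varphi(k_{j+1})^{2/q}\le\vartheta\,[\varphi(k_j)-\varphi(k_{j+1})]^{2/p-1}$ for the normalized weighted measures $\varphi(k_j)$; summing this telescoping inequality gives $\varphi(k_n)\le\left(\vartheta^{a}\varphi(k_0)/n\right)^{1/\mu}$, so after a number $\overline n$ of steps, fixed by \eqref{n_indep} and independent of $Z$ and $\rho$, the density condition \eqref{hyp:thcota} holds at level $k_{\overline n}$; only then is Theorem \ref{thcota} applied, with $d=\eta_{\overline n}\omega(4\rho)$ and $\ell=\tfrac12$, producing the universal constant $\overline\eta=1-2^{-(\overline n+2)}$. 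This level iteration is the heart of the proof and is absent from your proposal.

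Your fallback dichotomy does not close the gap either. The quantity $d$ in \eqref{de} is only constrained from below, so ``$d$ not small'' is not a case you can exploit: largeness of $\int_{A_+^*(k,2\rho)}y^{1-2s}|W-k|^2\,dxdy$ gives no information about the sub-level sets $A_-^*$, for which no analogue of hypothesis (ii) is assumed; moreover, for $Z\in\Sigma_{\mathcal D,R}^*$ one has $\mathcal K^-(Z)=(-\infty,0]$, so the symmetric application of Theorem \ref{thcota} to $-W$ at a level near $m(4\rho)$ is not even admissible in general (the paper handles the Dirichlet case by the alternative $M(4\rho)>\tfrac12\omega(4\rho)$ or $-m(4\rho)\ge\tfrac12\omega(4\rho)$, which keeps all levels $k_j$ nonnegative). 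Finally, your suggestion that ``an a priori measure-smallness refinement of (ii) makes this integral small'' is circular: smallness of the super-level-set measure at a level within a universal fraction $\eta_{\overline n}$ of $\omega(4\rho)$ from $M(4\rho)$ is precisely what must be proven, and in the paper it is exactly what the summed measure-decay iteration establishes.
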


\begin{proof}
Let  $Z\in\mathscr{C}_{\Omega(z_0,R)}^\circ$ and $0<4\rho<\min\{\widetilde{\rho}(Z),\overline{\rho}(Z)\}$,  let us define the sequence
\begin{equation*}
k_j=M(4\rho)-\eta_j\omega(4\rho),\quad\mbox{with }\eta_{j}=\frac{1}{2^{j+1}},\ j \in \mathbb{N}.
\end{equation*}
Assume first that $Z\in\mathscr{C}_{\Omega(z_0,R)}^\circ \backslash\Sigma_{\mathcal{D},R}^*$ so that $\mathcal{K}^+(Z)=(-\infty,\infty)$ and observe that  one of the following conditions is satisfied: either
\begin{equation}\label{ineq:choice}
|A_+^*(k_0,2\rho)|_{y^{1-2s}} \leq\frac12 |\mathscr{C}_{\Omega(z_0,R)}(Z,2\rho)|_{y^{1-2s}}
\quad \mbox{or} \quad
|A_-^*(k_0,2\rho)|_{y^{1-2s}} \leq \frac12 |\mathscr{C}_{\Omega(z_0 ,R)}(Z,2\rho)|_{y^{1-2s}}.\nonumber
\end{equation}
Assume without loss of generality that $|A_+^*(k_0,2\rho)|\leq\frac12 |\mathscr{C}_{\Omega(z_0,R)}(Z,2\rho)|$. As a consequence, $$|A_+^*(k_j,2\rho)|\leq\frac12 |\mathscr{C}_{\Omega(z_0,R)}(Z,2\rho)| \qquad \mbox{ for } \qquad j\geq1. 
$$

On the other hand, if $Z\in\Sigma_{\mathcal{D},R}^*$, we can assume that at least one between $M(4\rho)$ and $-m(4\rho)$ is greater than $\frac{1}{2}\omega(4\rho)$; suppose that $M(4\rho)>\frac{1}{2}\omega(4\rho)$. Therefore we have that  $k_j>0$ for $j\geq0$.

Then, using Lemma \ref{lem:measure_lvlset} with $h=k_{j+1}$ and $k=k_j$, we obtain
\begin{align*}
 (k_{j+1}-k_{j})^2|A_+^*(k_{j+1},2\rho)|_{y^{1-2s}}^{\frac{2}{q}}&\leq \beta_s^2(2\rho)^2|B_{2\rho}|_{y^{1-2s}}^{2\left(\frac{1}{q}-\frac{1}{p}\right)}\int_{A_+^*(k_{j},2\rho)}\mkern-20mu y^{1-2s}|\nabla W|^2dxdy\,,
\end{align*}
with $p,q$ such that $q=\frac{N+1}{N} (2-\varepsilon_0)$ and   $p=2-\varepsilon_0$ for a suitable $ \varepsilon_0>0$.

Moreover, applying Lemma \ref{lem:caccioppoli} to the function $t_{k_j}^+(W)\in \mathcal{X}_{\Sigma_{\mathcal{D},R}}^s(\mathscr{C}_{\Omega(z,R)})$, $j\geq0$, we find
\begin{align*}
\int_{A_+^*(k_{j},2\rho)}\mkern-20mu y^{1-2s}|\nabla W|^2dxdy\leq
\frac{C}{4\rho^2}\int_{A_+^*(k_{j},4\rho)}\mkern-20mu y^{1-2s}|W-k_{j}|^2dxdy
 \leq \frac{C}{4\rho^2} [M(4\rho)-k_j]^2|B_{4\rho}(Z)|_{y^{1-2s}}.\nonumber
\end{align*}
Gathering together the above inequalities we have that
\begin{equation}\label{ineq:msr}
(k_{j+1}-k_j)^2|A_+^*(k_{j+1},2\rho)|_{y^{1-2s}}^{\frac{2}{q}}
\leq  C\beta_s |B_{2\rho}|_{y^{1-2s}}^{2\left(\frac{1}{q}-\frac{1}{p}\right)+1}[M(4\rho)-k_j]^2
   |A_+^*(k_j,2\rho)-A_+^*(k_{j+1},2\rho)|_{y^{1-2s}}^{\frac{2}{p}-1},
\end{equation}
where the constant $C>0$ is the one appearing in the Caccioppoli inequality. Let us define
\begin{equation*}
\varphi(k)=\frac{|A_+^*(k,2\rho)|_{y^{1-2s}}}{|\mathscr{C}_{\Omega(z,R)}(Z,2\rho)|_{y^{1-2s}}},
\end{equation*}
and note that, by  \eqref{ineq:zeta} and \eqref{ineq:zetaw}, we have $|B_{2\rho}|_{y^{1-2s}}\leq\frac{1}{\zeta_s}|\mathscr{C}_{\Omega(z,R)}(Z,2\rho)|_{y^{1-2s}}$. Then, since $2\left(\frac{1}{q}-\frac{1}{p}\right)+1>0$, taking into account that
$$k_{j+1}-k_{j}=\eta_{j+1}\omega(4\rho) \qquad \mbox{ and } \qquad M(4\rho)-k_j=\eta_j\omega(4\rho)\,,
$$ from \eqref{ineq:msr} we find
\begin{equation*}
|\varphi(k_{j+1})|^{\frac{2}{q}}\leq\vartheta \left[\varphi(k_j)-\varphi(k_{j+1})\right]^{\frac{2}{p}-1} \qquad \mbox{with }
 \quad \vartheta=\frac{4C\beta_s}{\zeta_{s}^{2\left(\frac{1}{q}-\frac{1}{p}\right)+1}}\,.
\end{equation*}
 Let us set $\displaystyle\mu=\frac{2}{q}\frac{1}{\frac{2}{p}-1}>0$ and $a=\frac{p}{2-p}$, so that the above inequality turns into
\begin{equation*}
\varphi^{\mu} (k_{n}) \leq \vartheta^{a} \left[\varphi(k_j)-\varphi(k_{j+1}) \right], \qquad j \geq 0 \,.
\end{equation*}
Summing up the above inequality  for $j=0,1,\ldots,n$ and noticing that $\varphi(k_{j})\geq\varphi(k_{n})$ we get
\begin{equation*}
n\varphi^{\mu} (k_{n})\leq \vartheta^{a}\left[\varphi(k_0)-\varphi(k_{n+1}) \right]\,,
\end{equation*}
and  by  \eqref{ineq:choice}, we conclude that
\begin{equation}\label{ineq:toadm}
\varphi(k_{n})\leq \left(\frac{\vartheta^{a}\varphi(k_0)}{n}\right)^{\frac{1}{\mu}}. %\leq \left(\frac{\vartheta^{a}(1-\lambda)}{n}\right)^{\frac{1}{\mu}}.
\end{equation}
Let us set $\overline{n}>0$ such that
\begin{equation}\label{n_indep}
\overline{n}\geq %\left\lceil\frac{\vartheta^{a}(1-\lambda)}{\Lambda^{\mu}}\right\rceil=
\left\lceil\frac{(4C\beta_s)^a \varphi (k_0)}{\zeta_s^{\mu-1}\Lambda^{\mu}}\right\rceil,
\end{equation}
where $\Lambda$ is determined by \eqref{hyp:thcota} with $\ell=\frac{1}{2}$, $\zeta_s$ depends on $\zeta$ in \eqref{ineq:zeta} and the \textit{$A_2$-constant} (see \eqref{ineq:zetaw}), the constant $\beta_s$ depends on $N$ and the weight $y^{1-2s}$ and $C>0$ is an universal constant coming from the Caccioppoli inequality.\newline
Consequently, $\overline{n}$ is independent of $Z$ and $\rho$. Then, by  inequality \eqref{ineq:toadm}, we find
\begin{equation*}
\frac{|A_+^*(k_n,2\rho)|_{y^{1-2s}}}{|\mathscr{C}_{\Omega(z,R)}(Z,2\rho)|_{y^{1-2s}}}\leq \Lambda ,\qquad\forall n\geq\overline{n}.
\end{equation*}
Applying Theorem \ref{thcota} with $k_{\overline{n}}=M(4\rho)-\eta_{\overline{n}}\omega(4\rho)$, $r=2\rho$ and $\ell=\frac12$, so that
\begin{equation*}
\frac{1}{\Lambda|B_{2\rho}(Z)|_{y^{1-2s}}}\int_{A_+^*(M(4\rho)-\eta_{\overline{n}}\omega(4\rho),2\rho)}\mkern-45mu y^{1-2s}|W-(M(4\rho)-\eta_{\overline{n}}\omega(4\rho))|^2dxdy\leq (\eta_{\overline{n}}\omega(4\rho))^2=d^2,
\end{equation*}
we obtain,
\begin{equation*}
W(X)\leq k+\ell d\leq [M(4\rho)-\eta_{\overline{n}}\omega(4\rho)]+\frac{1}{2}\eta_{\overline{n}}\omega(4\rho)\leq M(4\rho)-\frac{1}{2}\eta_{\overline{n}}\omega(4\rho), \quad \mbox{ a.e. in } \ \mathscr{C}_{\Omega(z,R)(Z,\rho)}\,.
\end{equation*}
As a consequence,
\begin{align*}
\omega(\rho)&=M(\rho)-m(\rho)\leq M(\rho)-m(4\rho)\leq [M(4\rho)-\frac{1}{2}\eta_{\overline{n}}\omega(4\rho)]-m(4\rho) \leq (1-\frac{1}{2}\eta_{\overline{n}})\omega(4\rho),
\end{align*}
and we deduce \eqref{ineq:oscillation} by choosing  $\overline{\eta}=(1-\eta_{\overline{n}+1})$.
\end{proof}

The next result gives an estimate on the growth of the oscillation.

\begin{theorem}\label{pre2_holder}
Given $z_0\in\overline{\Omega}$ and $R>0$,    let  $W\in \mathcal{X}_{\Sigma_{\mathcal{D},R}}^s(\mathscr{C}_{\Omega(z_0,R)})$ be a solution to the homogeneous problem \eqref{homo}. Then, there exist $0<\mathcal{H}<1$ and $0<\tau<\frac{1}{2}$  such that for any $Z\in\mathscr{C}_{\Omega(z_0,R)}^{\circ}$ there exists $\delta(Z)>0$ such that
\begin{equation*}
\omega(\rho)=\sup\limits_{X\in\overline{\mathscr{C}}_{\Omega(z_0,R)}(Z,\rho)}\mkern-10mu W(X)-\inf\limits_{X\in \overline{\mathscr{C}}_{\Omega(z_0,R)}(Z,\rho)}\mkern-10mu W(X)\leq\mathcal{H}\rho^{\tau},
\end{equation*}
for any  $0<\rho<\delta(Z)$.
\end{theorem}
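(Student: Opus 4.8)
The plan is to deduce the power decay of $\omega(\rho)$ by iterating the one--step contraction $\omega(\rho)\le\overline\eta\,\omega(4\rho)$ furnished by Theorem \ref{th:pre_holder}. Fix $Z\in\mathscr{C}_{\Omega(z_0,R)}^{\circ}$, set $\delta(Z):=\tfrac18\min\{\widetilde{\rho}(Z),\overline{\rho}(Z)\}>0$ and $\rho_j:=4^{-j}\delta(Z)$ for $j\ge0$. For every $j\ge1$ we have $0<4\rho_j\le\delta(Z)<\min\{\widetilde{\rho}(Z),\overline{\rho}(Z)\}$, so Theorem \ref{th:pre_holder} is applicable at radius $\rho_j$ provided its hypotheses (i)--(ii) hold at the scale $4\rho_j$.

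First I would check (i)--(ii) at every dyadic scale, uniformly. If $Z\notin\Sigma_{\mathcal{D},R}^{*}$ then $\mathcal K^{+}(Z)=(-\infty,\infty)$ and (i) is automatic; if $Z\in\Sigma_{\mathcal{D},R}^{*}$, the homogeneous condition $B(W)=0$ forces $W(Z)=0$, hence $m(4\rho_j)\le0\le M(4\rho_j)$, so at least one of $M(4\rho_j)$, $-m(4\rho_j)$ is $\ge\tfrac12\omega(4\rho_j)$: in the first case $M(4\rho_j)-\tfrac12\omega(4\rho_j)\ge0\in\mathcal K^{+}(Z)$, in the second we use the symmetric (lower) version of Theorem \ref{th:pre_holder}. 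For (ii) I take the level at the midpoint $k=\tfrac12\big(M(4\rho_j)+m(4\rho_j)\big)$ (i.e. $\eta=\tfrac12$): since $\{W>k\}$, $\{W<k\}$, $\{W=k\}$ partition $\mathscr{C}_{\Omega(z_0,R)}(Z,2\rho_j)$, at least one superlevel set has $y^{1-2s}$--measure $\le\tfrac12|\mathscr{C}_{\Omega(z_0,R)}(Z,2\rho_j)|_{y^{1-2s}}$, which is precisely the pigeonhole input the proof of Theorem \ref{th:pre_holder} starts from. Hence $\omega(\rho_j)\le\overline\eta\,\omega(\rho_{j-1})$ for all $j\ge1$ with the same $\overline\eta\in(0,1)$ that Theorem \ref{th:pre_holder} delivers, independent of $Z$ and $\rho$.

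Iterating gives $\omega(\rho_j)\le\overline\eta^{\,j}\omega(\rho_0)$ for all $j\ge0$. Given $0<\rho<\delta(Z)$, choose $j\ge0$ with $\rho_{j+1}\le\rho<\rho_j$; by monotonicity of $\omega$ and $4^{j}\ge\delta(Z)/(4\rho)$, with $\tau:=\log_{4}(1/\overline\eta)>0$,
\begin{equation*}
\omega(\rho)\le\omega(\rho_j)\le\overline\eta^{\,j}\omega(\rho_0)=\big(4^{j}\big)^{-\tau}\omega(\rho_0)\le 4^{\tau}\,\delta(Z)^{-\tau}\,\omega(\rho_0)\,\rho^{\tau},
\end{equation*}
where $\omega(\rho_0)<\infty$ by the local $L^{\infty}$ bound of Corollary \ref{cor:infty_boundhomo}. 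This is already the sought power decay. To match the normalisation in the statement ($\mathcal H$ and $\tau$ independent of $Z$), I would, if necessary, first replace $\overline\eta$ by $\max\{\overline\eta,\tfrac23\}$ so that $0<\tau<\tfrac12$, then pass to a slightly smaller exponent $\tau'\in(0,\tau)$ and shrink $\delta(Z)$ so that $4^{\tau}\delta(Z)^{-\tau}\omega(\rho_0)\,\rho^{\tau-\tau'}\le\tfrac12$ on $(0,\delta(Z))$; this gives $\omega(\rho)\le\tfrac12\,\rho^{\tau'}$ there, i.e. the conclusion with $\mathcal H=\tfrac12$ and exponent $\tau'$.

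The genuinely hard work is upstream, in Theorems \ref{thcota} and \ref{th:pre_holder}; here the only delicate points are (a) verifying that $W(Z)=0$ on $\Sigma_{\mathcal{D},R}^{*}$ (together with $\mathcal K^{+}(Z)=\mathbb R$ off it) always allows one to pick an admissible half in (i)--(ii) at \emph{every} scale $4\rho_j$, including at points $Z$ where the available radius $\widetilde{\rho}(Z)$ is small, and (b) the bookkeeping that converts the scale--dependent constant $4^{\tau}\delta(Z)^{-\tau}\omega(\rho_0)$ into the $Z$--independent pair $(\mathcal H,\tau)$. I expect this last uniformity step — rather than any individual estimate — to be the main obstacle to a clean write--up.
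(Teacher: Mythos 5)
Your proposal is correct and follows essentially the same route as the paper: dyadic iteration of the oscillation decay $\omega(\rho)\le\overline\eta\,\omega(4\rho)$ from Theorem \ref{th:pre_holder}, converting the geometric decay into a power $\rho^{\tau}$ and then adjusting $\delta(Z)$ (and, in your case, slightly lowering the exponent) to make $\mathcal{H}<1$ and $\tau<\frac12$ uniform in $Z$. Your explicit verification of hypotheses (i)--(ii) at each scale and your $\max\{\overline\eta,\tfrac23\}$ device are harmless variants of what the paper does inside the proof of Theorem \ref{th:pre_holder} and via the explicit bound \eqref{exponente_regularidad}.
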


\begin{proof}
Let $r(Z)=\min\{\widetilde{\rho}(Z),\overline{\rho}(Z)\}$,  by  Theorem \ref{th:pre_holder}, inequality \eqref{ineq:oscillation}
holds true for any $\rho<r(Z)/4$. Take $\tau$, $M$ positive such that $4^{\tau}\overline{\eta}=a<1$ and $\omega(\rho)\leq M\rho^{\tau}$ for $\frac{r(Z)}{4}\leq\rho< r(Z)$.
Then, again by  \eqref{ineq:oscillation}, we have that
\begin{equation*}
\omega(\rho)\leq\overline{\eta}4^{\tau}M\rho^{\tau},
\end{equation*}
for $\frac{r(Z)}{4^{2}}\leq\rho<\frac{r(Z)}{4}$.
In general, if $\frac{r(Z)}{4^{i+1}}\leq\rho<\frac{r(Z)}{4^{i}}$ for some $i\in \mathbb{N}$, we deduce that
$\omega(\rho)\leq \left(\overline{\eta}4^{\tau}\right)^iM\rho^{\tau}$.
Letting $\overline{i}$ large enough such that $\mathcal{H}=Ma^{\overline{i}}<1$, we obtain $\omega(\rho)\leq\mathcal{H}\rho^{\tau}$ for any $\rho<\delta(Z)=\frac{r(Z)}{4^{\overline{i}}}$. On the other hand, since we have chosen $\tau>0$ such that $4^{\tau}\overline{\eta}<1$ and, by  Theorem \ref{th:pre_holder},  $\overline{\eta}=1-\eta_{\overline{n}+1}$ for some $\overline{n}\geq0$ independent from $Z$ and $\rho$, it follows that
\begin{equation}\label{exponente_regularidad}
\tau<\frac{1}{2}\log_2\left(\frac{2^{\overline{n}+2}}{2^{\overline{n}+2}-1}\right)<\frac12.
\end{equation}
\end{proof}

Before  proving Theorem \ref{holder_result}, let us observe the following:
\begin{itemize}
\item[(i)] if $z_0 \in\Omega$, then there exist $R>0$ sufficiently small such that $\Sigma_{\mathcal{D},R}=\Sigma_{\mathcal{N},R}=\emptyset$  and $\widetilde{\rho}(Z)=dist(Z,\partial_L\mathscr{C}_{\Omega(z_0 ,R)})$ for any $z  \in\mathscr{C}_{\Omega(z_0 ,R)}$.
\item[(ii)] if $z_0 \in \Sigma_{\mathcal{D}}\backslash\Gamma$, then there exist $R>0$ such that $\Sigma_{\mathcal{N},R}=\emptyset$. Hence $\widetilde{\rho}(Z)=dist(Z,\partial_B\mathscr{C}_{\Omega(z_0 ,R)})$ for any $Z\in\Sigma_{\mathcal{D},R}^*$ and $\widetilde{\rho}(Z)=dist(Z,\partial_0\mathscr{C}_{\Omega(z_0 ,R)})$ for any $Z\in\mathscr{C}_{\Omega(z_0 ,R)}^{\circ}\backslash\Sigma_{\mathcal{D},R}^*$.
\item[(iii)] if $z_0 \in \Sigma_{\mathcal{N}}$, then there exist $R>0$ such that $\Sigma_{\mathcal{D},R}=\emptyset$. Hence we have $\widetilde{\rho}(Z)=dist(Z,\partial_B\mathscr{C}_{\Omega(z_0 ,R)})$ for any $Z\in\mathscr{C}_{\Omega(z_0 ,R)}^{\circ}$.
\item[(iv)] if $z_0 \in\Gamma$ then $\forall R>0 \ $  both $\Sigma_{\mathcal{D},R}\neq\emptyset$ and $ \Sigma_{\mathcal{N},R}\neq\emptyset$
and hence $\widetilde{\rho}(Z)=dist(Z,\partial_B\mathscr{C}_{\Omega(z_0 ,R)})$ for any $Z\in\Sigma_{\mathcal{D},R}^*$ and $\widetilde{\rho}(Z)=dist(Z,\partial_0\mathscr{C}_{\Omega(z_0 ,R)})$ for any $Z\in\mathscr{C}_{\Omega(z_0 ,R)}^{\circ}\backslash\Sigma_{\mathcal{D},R}^*$.
\end{itemize}

Now, consider $\overline{\mathscr{C}}_{\Omega(z_0,R/2)}\subset \mathscr{C}_{\Omega(z,R)}$ if $z\in\Omega$ and $\overline{\mathscr{C}}_{\Omega(z_0,R/2)}\subset\mathscr{C}_{\Omega(z,R)}^{\circ}$ if $z\in\partial\Omega$.

Thus we deduce that:
\begin{itemize}
\item[(i)] if $z\in\Omega$, then $\widetilde{\rho}(Z)=dist(Z,\partial_L\mathscr{C}_{\Omega(z,R)})\geq\widetilde{\rho}>0$ for any $Z\in\overline{\mathscr{C}}_{\Omega(z,R/2)}$ and some positive $\widetilde{\rho}$.

\item[(ii)] if $z\in\Sigma_{\mathcal{D}}\backslash\Gamma$, then $\widetilde{\rho}(Z)=\widetilde{\rho}>0$ for some positive $\widetilde{\rho}$ for any $Z\in\Sigma_{\mathcal{D},R/2}^*$ and $\widetilde{\rho}(Z)=dist(Z,\Sigma^*_{\mathcal{D},R/2})$ for any $Z\in\overline{\mathscr{C}}_{\Omega(z,R/2)}\backslash\Sigma_{\mathcal{D},R/2}^*$.

\item[(iii)] if $z\in \Sigma_{\mathcal{N}}$, then $\widetilde{\rho}(Z)=dist(Z,\partial_B\mathscr{C}_{\Omega(z,R)})\geq\widetilde{\rho}>0$ for any $Z\in\overline{\mathscr{C}}_{\Omega(z,R/2)}$ and some positive $\widetilde{\rho}$.

\item[(iv)] if $z\in\Gamma$ then $\widetilde{\rho}(Z)=\widetilde{\rho}>0$ for some positive $\widetilde{\rho}$ for any $Z\in\Sigma_{\mathcal{D},R/2}^*$ and $\widetilde{\rho}(Z)=dist(Z,\Sigma_{\mathcal{D},R/2})$ for any $Z\in\overline{\mathscr{C}}_{\Omega(z,R/2)}\backslash\Sigma_{\mathcal{D},R/2}^*$.
\end{itemize}
Observe that if  either (i) or (iii) holds true  then the number
$0<\delta(Z)$ in Theorem \ref{pre2_holder} has an infimum value, namely $0<\delta<\delta(Z)$ for any $Z\in\overline{\mathscr{C}}_{\Omega(z_0,R/2)}$
and we deduce that solutions $W$ to problem \eqref{homo} are H\"older continuous up to the boundary of $\mathscr{C}_{\Omega(z_0,R/2)}$.
In fact, let us consider two points $Z_1$ and $Z_2$ in $\overline{\mathscr{C}_{\Omega(z_0,R)}^m}$  with $m>0$. Then, by  Corollary
\ref{cor:infty_boundhomo} and Theorem \ref{pre2_holder} we find
\begin{itemize}
\item If $|Z_1-Z_2|\geq\delta$, we have
$$\frac{|W(Z_1)-W(Z_2)|}{|Z_1-Z_2|^{\tau}}\leq\frac{2}{\delta^{\tau}}\max\limits_{\mathscr{C}_{\Omega(z_0,R/2)}^m}W=\frac{2}{\delta^{\tau}}\|W\|_{L^{\infty}(\mathscr{C}_{\Omega(z_0,R/2)}^m)}.$$
\item If $|Z_1-Z_2|<\delta$,  by Theorem \ref{pre2_holder},  $\frac{|W(Z_1)-W(Z_2)|}{|Z_1-Z_2|^{\tau}}\leq\mathcal{H}$, $0<\mathcal{H}<1$.
\end{itemize}
We conclude the H\"older regularity with a constant
\begin{equation}\label{eq:h-constant}
\mathcal{T}=\max\{\mathcal{H},\frac{2}{\delta^{\tau}}\|W\|_{L^{\infty}(\mathscr{C}_{\Omega(z,R/2)}^m)}\}.
\end{equation}
Now we deal with the situation described in items (ii) and (iv).

\begin{theorem}\label{holder}
For any  $z_0\in\Sigma_{\mathcal{D}}$ and $R>0$ let  $W\in \mathcal{X}_{\Sigma_{\mathcal{D},R}}^s(\mathscr{C}_{\Omega(z_0,R)})$ be  a solution to the homogeneous problem \eqref{homo}.
Then   $W\in\mathcal{C}_{loc}^{\tau}(\overline{\mathscr{C}}_{\Omega(z_0,R/2)})$ for some $0<\tau<\frac12$.
\end{theorem}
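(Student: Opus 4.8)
The plan is to patch the interior-type estimate used for cases (i) and (iii) together with its reflection to the nearest point of $\Sigma_{\mathcal D,R/2}^{*}$, the only genuinely new feature being that the admissibility radius $\widetilde\rho(Z)$ is \emph{not} bounded below uniformly on $\overline{\mathscr C}_{\Omega(z_0,R/2)}$: from the discussion of items (ii) and (iv), $\widetilde\rho(Z)=\widetilde\rho>0$ uniformly for $Z\in\Sigma_{\mathcal D,R/2}^{*}$, whereas $\widetilde\rho(Z)=\mathrm{dist}(Z,\Sigma_{\mathcal D,R/2}^{*})$ for $Z\in\overline{\mathscr C}_{\Omega(z_0,R/2)}\setminus\Sigma_{\mathcal D,R/2}^{*}$, which degenerates near $\Sigma_{\mathcal D,R/2}^{*}$. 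First I would fix $m>0$ and work inside the truncated cylinder $\overline{\mathscr C_{\Omega(z_0,R/2)}^{m}}$, on which $W\in L^{\infty}$ by Corollary \ref{cor:infty_boundhomo}; since Hölder continuity is a local property and any compact subset of $\overline{\mathscr C}_{\Omega(z_0,R/2)}$ lies in some $\overline{\mathscr C_{\Omega(z_0,R/2)}^{m}}$, this reduction is harmless. Writing $\Sigma:=\Sigma_{\mathcal D,R/2}^{*}$ and denoting by $\delta(Z)$ the radius produced by Theorem \ref{pre2_holder} (comparable to $\min\{\widetilde\rho(Z),\overline\rho(Z)\}$), the behaviour of $\widetilde\rho$ together with the uniform lower bound on the Poincaré radius $\overline\rho$ over $\Sigma$ provided by Lemma \ref{poincarelemma} yields constants $\delta_0>0$ and $c_0\in(0,1)$, independent of $Z$, such that $\delta(Z)\geq\delta_0$ for $Z\in\Sigma$, while $\delta(Z)\geq c_0\,\mathrm{dist}(Z,\Sigma)$ as soon as $\mathrm{dist}(Z,\Sigma)$ is small and $\delta(Z)\geq\delta_0'>0$ once $\mathrm{dist}(Z,\Sigma)$ is bounded away from $0$.

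Next I would fix $Z_1,Z_2\in\overline{\mathscr C_{\Omega(z_0,R/2)}^{m}}$, put $\lambda=|Z_1-Z_2|$, assume $\mathrm{dist}(Z_1,\Sigma)\leq\mathrm{dist}(Z_2,\Sigma)$, and split into three regimes governed by a universal threshold $\varepsilon_0^{*}>0$ built from $\delta_0,\delta_0',c_0$. If $\lambda\geq\varepsilon_0^{*}$, then $|W(Z_1)-W(Z_2)|\leq 2\|W\|_{L^{\infty}(\mathscr C_{\Omega(z_0,R/2)}^{m})}\leq 2\|W\|_{L^{\infty}(\mathscr C_{\Omega(z_0,R/2)}^{m})}(\varepsilon_0^{*})^{-\tau}\lambda^{\tau}$. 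If $\lambda<\varepsilon_0^{*}$ and $\mathrm{dist}(Z_1,\Sigma)\geq\tfrac{2}{c_0}\lambda$, then $\delta(Z_1)\geq 2\lambda$, so Theorem \ref{pre2_holder} applied at $Z_1$ with radius $2\lambda$ gives $|W(Z_1)-W(Z_2)|\leq\mathcal H(2\lambda)^{\tau}$, since $Z_1,Z_2\in\overline{\mathscr C}_{\Omega(z_0,R)}(Z_1,2\lambda)$. Finally, if $\lambda<\varepsilon_0^{*}$ and $\mathrm{dist}(Z_1,\Sigma)<\tfrac{2}{c_0}\lambda$, I would pick $Z_0\in\Sigma$ realizing $\mathrm{dist}(Z_1,\Sigma)=|Z_1-Z_0|$; then $|Z_2-Z_0|\leq(1+\tfrac{2}{c_0})\lambda$, so both $Z_1,Z_2$ lie in $\overline{\mathscr C}_{\Omega(z_0,R)}(Z_0,\Lambda_0\lambda)$ with $\Lambda_0=1+\tfrac{2}{c_0}$, and — having chosen $\varepsilon_0^{*}$ so small that $\Lambda_0\varepsilon_0^{*}\leq\delta_0\leq\delta(Z_0)$ — Theorem \ref{pre2_holder} applied at $Z_0$ yields $|W(Z_1)-W(Z_2)|\leq\mathcal H\,\Lambda_0^{\tau}\lambda^{\tau}$. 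Note that $Z_0\in\mathscr C_{\Omega(z_0,R)}^{\circ}$ because $B_{R/2}(z_0)$ is compactly contained in $B_R(z_0)$, so Theorem \ref{pre2_holder} genuinely applies at $Z_0$.

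Collecting the three regimes, $W\in\mathcal C^{\tau}(\overline{\mathscr C_{\Omega(z_0,R/2)}^{m}})$ with the exponent $\tau\in(0,\tfrac12)$ already furnished by Theorem \ref{pre2_holder} (and bounded as in \eqref{exponente_regularidad}) and Hölder constant $\mathcal T=\max\{\mathcal H\Lambda_0^{\tau},\,2\|W\|_{L^{\infty}(\mathscr C_{\Omega(z_0,R/2)}^{m})}(\varepsilon_0^{*})^{-\tau}\}$, exactly in the spirit of \eqref{eq:h-constant}; since $m>0$ is arbitrary this gives $W\in\mathcal C^{\tau}_{loc}(\overline{\mathscr C}_{\Omega(z_0,R/2)})$. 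The step I expect to be the main obstacle is the second half of the first paragraph, namely making precise that $\delta(Z)\gtrsim\mathrm{dist}(Z,\Sigma)$ near the Dirichlet set: this requires tracking how the weighted Poincaré/Sobolev constant of Lemma \ref{lem:psobolev} and the Caccioppoli constant of Lemma \ref{lem:caccioppoli} scale in the radius (as was done inside the proof of Theorem \ref{thcota}) and checking that the relaxed Poincaré radius $\overline\rho(Z)$ for points off $\Sigma$ does not itself collapse faster than $\mathrm{dist}(Z,\Sigma)$. Everything after that is the classical ``interior estimate plus nearest-boundary-point'' patching and is only a matter of bookkeeping universal constants.
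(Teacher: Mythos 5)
Your proposal is correct and follows essentially the same route as the paper's proof: a uniform lower bound $\delta_{\texttt{H}}$ for the oscillation radius on $\Sigma_{\mathcal{D},R/2}^{*}$, the bound $\delta(Z)\gtrsim \mathrm{dist}(Z,\Sigma_{\mathcal{D},R/2}^{*})$ away from it (which the paper likewise simply assumes), the $L^{\infty}$ bound from Corollary \ref{cor:infty_boundhomo} for well-separated points, and a nearest-Dirichlet-point patching for the remaining case. The only cosmetic difference is that you apply the oscillation estimate of Theorem \ref{pre2_holder} once at the nearest point $Z_0$ (constant $\mathcal{H}\Lambda_0^{\tau}$), while the paper uses the triangle inequality through $\overline{Z}\in\Sigma_{\mathcal{D},R/2}^{*}$ twice, yielding the constant $3\mathcal{H}$.
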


\begin{proof}
Observe that  the number $0<\delta(Z)$ in Theorem \ref{pre2_holder} is bounded
from below by some $0<\delta_{\texttt{H}}$ for $Z\in\Sigma_{\mathcal{D},R/2}^*$ and we can assume that $\delta(Z)\geq\min
\left\{\delta_{\texttt{H}},dist(Z,\Sigma_{\mathcal{D},R/2}^*)\right\}$ for $Z\in\Sigma_{\mathcal{N},R/2}^*$. Moreover, by
the construction of the lateral boundary of the extension cylinder, the numbers $\delta(Z)$ do not depend on the $y$ variable.
Hence such an  infimum   $\delta_{\texttt{H}}>0$ is attained at those points  of the type $Z=(z,0)$ in $\partial\Omega\times \{0\}$. Consider the set
\begin{equation*}
\mathscr{C}_{\Omega(z_0,R/2)}^{\delta}=\{Z\in\overline{\mathscr{C}^m_{\Omega(z,R/2)}}:\ dist(Z,\Sigma_{\mathcal{D},R/2}^*)\geq\delta_{\texttt{H}}\}.
\end{equation*}
As above, we only need to study the case $|Z_1-Z_2|<\delta_{\texttt{H}}$. Suppose that $Z_1\in\mathscr{C}_{\Omega(z_0,R/2)}^{\delta}$,
then $|Z_1-Z_2|\leq\delta_{\texttt{H}}<dist(Z_1,\Sigma_{\mathcal{D},R/2}^*)=\delta(Z_1)$, and thus, by  Theorem \ref{pre2_holder}, we have
\begin{equation*}
\frac{|W(Z_1)-W(Z_2)|}{|Z_1-Z_2|^{\tau}}\leq \mathcal{H}.
\end{equation*}
If neither $Z_1$ nor $Z_2$ belongs to $\mathscr{C}_{\Omega(z_0,R/2)}^{\delta}$ but one of them, say $Z_1\in\Sigma_{\mathcal{D},R/2}^*$,
we have $|Z_1-Z_2|\leq\delta_{\texttt{H}}=\delta(Z_1)$, and the results follows as before. If, instead, none of them belongs neither to
$\mathscr{C}_{\Omega(z_0,R/2)}^{\delta}$ nor to $\Sigma_{\mathcal{D},R/2}^*$, we have two cases:
\begin{itemize}
\item $|Z_1-Z_2|\leq\max\left\{dist(Z_1,\Sigma_{\mathcal{D},R/2}^*),dist(Z_2,\Sigma_{\mathcal{D},R/2}^*)\right\}$.
\item $|Z_1-Z_2|>\max\left\{dist(Z_1,\Sigma_{\mathcal{D},R/2}^*),dist(Z_2,\Sigma_{\mathcal{D},R/2}^*)\right\}$.
\end{itemize}
In the first case at least one of the two points, say $Z_1$, satisfies the inequality $|Z_1-Z_2|\leq\delta_{\texttt{H}}
<dist(Z_1,\Sigma_{\mathcal{D},R/2}^*)=\delta(Z_1)$ and we have the result as before. In the second case, there exists at least one
$\overline{Z}\in\Sigma_{\mathcal{D},R/2}^*$ such that $|\overline{Z}-Z_1|\leq|Z_1-Z_2|$, and using the triangle inequality it follows that
$|\overline{Z}-Z_2|\leq2|Z_1-Z_2|$. Since the result has been proved for the case when at least one point belongs to $\Sigma_{\mathcal{D},R/2}^*$,
we find
\begin{equation} \label{eq:paraluego}
|W(Z_1)-W(Z_2)|\leq|W(Z_1)-W(\overline{Z})|+|W(\overline{Z})-W(Z_2)|\leq 3\mathcal{H}|Z_1-Z_2|^{\tau},
\end{equation}
and we conclude the H\"older regularity with constant
$\mathcal{T}=\max\{3\mathcal{H},2\delta_{\texttt{H}}^{-\tau}\|W\|_{L^{\infty}(\mathscr{C}_{\Omega(z,R/2)})}\}$, with
$0<\mathcal{H}<1$ given by Theorem \ref{pre2_holder}, see \eqref{eq:h-constant}.
\end{proof}
\begin{corollary}\label{holder_abajo}
Let $\Omega$ be a smooth domain such that $\Sigma_{\mathcal{D}}$, $\Sigma_{\mathcal{N}}$ satisfy hypotheses $(\mathfrak{B})$ and let $w$ be the
solution to problem \eqref{homo_abajo} with $z\in\overline{\Omega}$ and $R>0$. Then, the function $w\in\mathcal{C}^{\tau}(\overline{\Omega(z,R/2)})$ for some $0<\tau<\frac12$.
\end{corollary}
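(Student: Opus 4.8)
The plan is to read the statement off from the boundary regularity already established for the extension. Recall that $w(x)=W(x,0)$, where $W=E_s[w]\in\mathcal{X}_{\Sigma_{\mathcal{D},R}}^s(\mathscr{C}_{\Omega(z,R)})$ is the energy solution of the homogeneous extended problem \eqref{homo}. Hence it suffices to know that $W$ is $\tau$--H\"older, for some $\tau\in(0,\tfrac12)$, on a neighbourhood in $\overline{\mathscr{C}}_{\Omega(z,R/2)}$ of the slice $\{y=0\}$, with a constant that does not depend on the chosen pair of points, and then to restrict to $y=0$.

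First I would split according to the four possible locations of $z$, exactly as in items (i)--(iv) following Theorem \ref{pre2_holder}. If $z\in\Omega$ (case (i)) or $z\in\Sigma_{\mathcal{N}}$ (case (iii)), the radius $\delta(Z)$ furnished by Theorem \ref{pre2_holder} has a strictly positive infimum $\delta$ over $Z\in\overline{\mathscr{C}}_{\Omega(z,R/2)}$; combining this with the $L^\infty$ bound of Corollary \ref{cor:infty_boundhomo} and distinguishing the two cases $|Z_1-Z_2|\geq\delta$ and $|Z_1-Z_2|<\delta$ gives $W\in\mathcal{C}^\tau(\overline{\mathscr{C}^{m}_{\Omega(z,R/2)}})$ for every $m>0$, with constant $\mathcal{T}$ as in \eqref{eq:h-constant}. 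If instead $z\in\Sigma_{\mathcal{D}}\setminus\Gamma$ or $z\in\Gamma$ (cases (ii) and (iv)), then $z\in\Sigma_{\mathcal{D}}$ and Theorem \ref{holder} applies verbatim, yielding $W\in\mathcal{C}_{loc}^{\tau}(\overline{\mathscr{C}}_{\Omega(z,R/2)})$ for some $\tau\in(0,\tfrac12)$.

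Once $W$ is known to be $\tau$--H\"older on $\overline{\mathscr{C}^{1}_{\Omega(z,R/2)}}$, the conclusion for $w$ is immediate: for $x_1,x_2\in\overline{\Omega(z,R/2)}$ put $X_i=(x_i,0)\in\overline{\mathscr{C}^{1}_{\Omega(z,R/2)}}$; then $|X_1-X_2|=|x_1-x_2|$ and
\begin{equation*}
|w(x_1)-w(x_2)|=|W(X_1)-W(X_2)|\leq \mathcal{T}\,|x_1-x_2|^{\tau},
\end{equation*}
so $w\in\mathcal{C}^{\tau}(\overline{\Omega(z,R/2)})$ with the same exponent $\tau<\tfrac12$ recorded in \eqref{exponente_regularidad}. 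The only genuinely delicate ingredient — already supplied by the results quoted above — is the uniformity of the H\"older constant, i.e.\ the existence of a positive lower bound for the radii $\delta(Z)$ of Theorem \ref{pre2_holder} over the relevant part of $\overline{\mathscr{C}}_{\Omega(z,R/2)}$: away from the Dirichlet set this is the distance-to-the-boundary bound used in cases (i) and (iii), while near $\Sigma_{\mathcal{D},R/2}^*$ it is precisely the geometric covering argument of the proof of Theorem \ref{holder} (splitting according to whether a point lies in $\mathscr{C}_{\Omega(z_0,R/2)}^{\delta}$, on $\Sigma_{\mathcal{D},R/2}^*$, or is reached through a nearby point of $\Sigma_{\mathcal{D},R/2}^*$ via the triangle inequality, as in \eqref{eq:paraluego}). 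Everything else is a direct application of Corollary \ref{cor:infty_boundhomo}, Theorem \ref{pre2_holder} and Theorem \ref{holder}.
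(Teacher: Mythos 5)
Your proposal is correct and follows essentially the same route as the paper: the H\"older bound for $w$ is obtained by restricting to $\{y=0\}$ the uniform H\"older estimate for the extension $W$, using the $L^\infty$ bound of Corollary \ref{cor:infty_boundhomo} for far-apart points and the lower bound on $\delta(Z)$ together with the triangle-inequality argument of \eqref{eq:paraluego} (Theorem \ref{holder}) for nearby points. The paper's proof merely carries out this same case distinction directly at the level of the boundary points $Z_i=(z_i,0)$ with the explicit constant $\mathcal{T}=\max\{3\mathcal{H},2\delta_{\texttt{H}}^{-\tau}\|w\|_{L^{\infty}(\Omega(z,R/2))}\}$, which is what you invoke implicitly.
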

\begin{proof} 
Since $\Omega$   satisfies hypotheses $(\mathfrak{B})$,   there exists $0<\delta_{\texttt{H}}<\delta(Z)$ for $Z\in\Sigma_{\mathcal{D},R/2}^*$ and we can assume that $\delta(Z)\geq\min\!\left\{\delta_{\texttt{H}},dist(Z,\Sigma_{\mathcal{D},R/2}^*)\right\}$ for $Z\in\Sigma_{\mathcal{N},R/2}^*$, with $\delta(Z)$ given in Theorem \ref{pre2_holder}.\newline
Suppose that $z_1,z_2\in (\overline{\Omega(z,R/2)}) $:
\begin{itemize}
\item If $|z_1-z_2|\geq\delta_{\texttt{H}}$. Then, due to Corollary \ref{cor:infty_boundhomo} we have $\|w\|_{L^{\infty}(\Omega(z,R/2))}<\infty$ and, therefore,
$$\frac{|w(z_1)-w(z_2)|}{|z_1-z_2|^{\tau}}\leq\frac{2}{\delta_{\texttt{H}}^{\tau}}\max\limits_{\Omega(z,R/2)}w.$$
\item While for $|z_1-z_2|<\delta_{\texttt{H}}$, let us set $Z_1=(z_1,0)$ and $Z_2=(z_2,0)$,
$Z_1,Z_2\in\overline{\mathscr{C}}_{\Omega(z,R/2)}$, such that $|Z_1-Z_2|<\delta_{\texttt{H}}$. Then, as in \eqref{eq:paraluego} in Theorem \ref{holder},
$$\frac{|w(z_1)-w(z_2)|}{|z_1-z_2|^{\tau}}=\frac{|W(Z_1)-W(Z_2)|}{|Z_1-Z_2|^{\tau}}\leq3\mathcal{H},\quad 0<\mathcal{H}<1.$$
\end{itemize}
Hence, we conclude
\begin{equation*}
|w(z_1)-w(z_2)|\leq \mathcal{T}|z_1-z_2|^\tau,\quad \forall z_1,z_2\in\overline{\Omega(z,R/2)},
\end{equation*}
with $\mathcal{T}=\max\{3\mathcal{H},2\delta_{\texttt{H}}^{-\tau}\|w\|_{L^{\infty}(\Omega(z,R/2))}\}$, and $\delta_{\texttt{H}}>0$ given as above.
\end{proof}
We prove now the main result of this work.
\begin{proof}[Proof of Theorem \ref{holder_result}]
Let $u$ be the solution to problem \eqref{problema1}, $\Omega$ a smooth bounded domain such that $\Sigma_{\mathcal{D}}$, $\Sigma_{\mathcal{N}}$ satisfy
hypotheses $(\mathfrak{B})$ and $f\in L^{p}(\Omega)$ for $p>\frac{N}{2s}$. Given $z\in\overline{\Omega}$ and $0<R<1$, let $v$ be the solution
to \eqref{nonhomo_abajo} and $w=u-v$ a function satisfying \eqref{homo_abajo}. Thus, using \eqref{bound_nonhomo_aux} and Corollary\ref{holder_abajo},
we conclude that, for any $x,y\in\overline{\Omega(z,R/2)}$,
\begin{equation*}
\omega(u,R/2)\leq \omega(w,R/2)+2 \max_{x\in\Omega(z,R/2)}v(x)\leq \mathcal{T} R^{\tau}+C(N,s,|\Sigma_{\mathcal{D}}|)\|f\|_{L^p (\Omega (z,R))}  R^{2s-\frac{N}{p}}\leq
\mathcal{C}R^{\gamma},
\end{equation*}
where $\gamma=\min\{\tau,2s-\frac{N}{p}\}<\frac12$ and $\mathcal{C}=\max\{\mathcal{T},2C(N,s,|\Sigma_{\mathcal{D}}|)\|f\|_{L^p (\Omega(z,R) )} \}$, with
\begin{equation*}
\mathcal{T}=\max\{3\mathcal{H},2\delta_{\texttt{H}}^{-\tau}\|w\|_{L^{\infty}(\Omega(z,R/2))}\}=\max\{3\mathcal{H},2\delta_{\texttt{H}}^{-\tau}
\|u-v\|_{L^{\infty}(\Omega(z,R/2))}\}.
\end{equation*}
Moreover, by  Theorem \ref{th:bound_nonhomo},
$\|u-v\|_{L^{\infty}(\Omega(z,R/2))}\leq\|u\|_{L^{\infty}(\Omega(z,R))}+\|v\|_{L^{\infty}(\Omega(z,R))}\le 2 C(N,s,|\Sigma_{\mathcal{D}}|)\|f\|_{L^p (\Omega(z,R) )}$
hence we obtain
\begin{equation*}
\mathcal{T}\le \max\{3\mathcal{H},
4\delta_{\texttt{H}}^{-\tau}C(N,s,|\Sigma_{\mathcal{D}}|)\|f\|_p\}.
\end{equation*}
Therefore, $\mathcal{C}=\max\{3\mathcal{H},4\delta_{\texttt{H}}^{-\tau}C(N,s,|\Sigma_{\mathcal{D}}|)\|f\|_{L^p (\Omega(z,R) )}\}$.
Repeating the steps above in Theorem \ref{holder}, we conclude
\begin{equation}\label{proofholder}
|u(x)-u(y)|\leq\mathscr{H}|x-y|^{\gamma},\ \mbox{for any } x,y\in\overline{\Omega(z,R/2)},
\end{equation}
where
\begin{equation*}
\mathscr{H}=\max\left\{9\mathcal{H},\frac{C(N,s,|\Sigma_{\mathcal{D}}|)\|f\|_{L^p (\Omega(z,R) )}}{\delta_{\texttt{H}}^{\gamma}}\right\},
\end{equation*}
and $\gamma=\min\{\tau,2s-\frac{N}{p}\}<\frac12$. Since the constants $\mathscr{H}$ and $\gamma$ do not depend neither on $z$ nor on $R$, to complete the proof, set $z_i\in\overline{\Omega}$,
$i=1,2,\ldots,m$ and $R_i>0$, small enough such that
\begin{equation*}
\overline{\Omega}=\bigcup_{i=1}^{m}\Omega(z_i,R_i/4).
\end{equation*}
Then \eqref{proofholder} follows by using a suitable recovering argument. 
%Then, given $x,y\in\overline{\Omega}$, we can assume that $x,y\in\Omega(z_i,R_i/2)$ for some $i\geq1$ and, hence, we deduce
%\eqref{proofholder} in $\overline \Omega$.
\end{proof}

\section{Moving the boundary conditions}

In this last part, we study the behavior of the solutions to problem \eqref{problema1} when we move the boundary conditions.
First, let us describe this mixed moving boundary data framework.
As introduced above, given $I_{\varepsilon}=[\varepsilon,|\partial\Omega|]$, let us consider the family of closed sets
$\{\Sigma_{\mathcal{D}}(\alpha)\}_{\alpha\in I_{\varepsilon}}$, satisfying
\begin{itemize}
\item[$(B_1)$] $\Sigma_{\mathcal{D}}(\alpha)$ has a finite number of connected components.
\item[$(B_2)$] $\Sigma_{\mathcal{D}}(\alpha_1)\subset\Sigma_{\mathcal{D}}(\alpha_2)$ if $\alpha_1<\alpha_2$.
\item[$(B_3)$] $|\Sigma_{\mathcal{D}}(\alpha_1)|=\alpha_1\in I_{\varepsilon}$.
\end{itemize}
We call $\Sigma_{\mathcal{N}}(\alpha)=\partial\Omega\backslash\Sigma_{\mathcal{D}}(\alpha)$ and $\Gamma(\alpha)=
\Sigma_{\mathcal{D}}(\alpha)\cap\overline{\Sigma}_{\mathcal{N}}(\alpha)$. Observe that, under the hypotheses $(B_1)$--$(B_3)$,
the limit sets $\Sigma_{\mathcal{D}}(\alpha),\, \Sigma_{\mathcal{N}}(\alpha)$ as $\alpha\to\varepsilon^+$ are not degenerated sets
(for instance a Cantor-like set).\newline
For a family of this type we consider the corresponding family of mixed boundary value problems
\begin{equation} \label{pal}
        \left\{
        \begin{tabular}{rcl}
        $(-\Delta)^su=f$ & &in $\Omega$, \\
        $B_{\alpha}(u)\!=0$  & &on $\partial\Omega$, \\
        \end{tabular}
        \right.
        \tag{$P_{\alpha}^s$}
\end{equation}
where $B_{\alpha}(u)$ means $B(u)$  with $\Sigma_{\mathcal{D}}$, $\Sigma_{\mathcal{N}}$, and $\Gamma$ are replaced by $\Sigma_{\mathcal{D}}(\alpha)$,
$\Sigma_{\mathcal{N}}(\alpha)$, and $\Gamma(\alpha)$ respectively. Similarly, $(\mathfrak{B}_{\alpha})$ means $(\mathfrak{B})$ with the natural changes as above.

\medskip 

Our main aim here is to prove Theorem \ref{cor:reg}. 

\medskip

The key point in order to obtain it,  is to prove that we can choose $\beta_s>0$ in \eqref{sobA} independent of the measure of the Dirichlet part.
Nevertheless, as we will see below, when one takes $\alpha\to0^+$ the control of the H\"older norm of such a family is lost. Hence, it is necessary to fix a positive minimum $\varepsilon>0$ on the measure of the family $\{\Sigma_{\mathcal{D}}(\alpha)\}_{\alpha\in I_{\varepsilon}}$, in order to guarantee the control on the H\"older norm for the family $\{u_{\alpha}\}_{\alpha\in I_{\varepsilon}}$.
\begin{proof}[Proof of Theorem \ref{cor:reg}]
 Assume that $\partial\Omega$ is a smooth manifold and $\Sigma_{\mathcal{D}}(\alpha)$, $\Sigma_{\mathcal{N}}(\alpha)$ satisfy hypotheses $(\mathfrak{B})$. 
 %, i.e. $\mathscr{C}_{\Omega}$ is an uniform $\lambda$-admissible set for any $0<\lambda<1$. 
 Thus, there exists $\delta>0$ such that $\overline{\rho}(Z)\geq\delta$ for all $Z\in\partial_{L}\mathscr{C}_{\Omega}$. Then:
 %taking in mind (i)-(ii) in Remark \ref{rem:ladm}, we have the following.
\begin{enumerate}
\item If $Z\in\overline{\mathscr{C}}_{\Omega}\backslash\Sigma_{\mathcal{D}}^*(\alpha)$, inequality \eqref{sobA} holds true with $\beta_s=\frac{c_s}{\zeta\lambda}$ independent of $\alpha$, for all $0<\rho<\delta$.
\item If $Z\in\Sigma_{\mathcal{D}}^*(\alpha)\setminus \Gamma^*(\alpha)$, we can set $0<\rho<\min\{\delta,dist(Z,\Gamma^*(\alpha))\}$, such that for all $X\in\mathscr{C}_{\Omega}(Z,\rho)$,
\begin{equation*}
\Pi(X,\Sigma_{\mathcal{D}}^*\cap B_{\rho}(Z),\mathscr{C}_{\Omega}(Z,\rho))\geq\varphi>0,
\end{equation*}
with $\varphi$ independent from $\alpha$, 
recalling that (according to \cite[\S 4]{S}) 
$$%\begin{equation}\label{angleprojection}
\Pi(x_0,E,A)=|\mathcal{V}_{x_0}(E)\cap \mathbb{S}_{N-1}(x_0)|=|\mathscr{S}_{x_0}|.
$$%\end{equation}
with  $\mathcal{V}$ defined as follows:  
given $x_0\in A$ and a closed set $E\subset A$, let us consider the cone $\mathcal{V}_{x_0}(E)\subset A$ consisting on all rays starting at $x_0$ and ending at some point $P\in E$.

\smallskip

Hence, inequality \eqref{sobA} holds true with $\beta_s\leq\frac{c_s}{\varphi}$ also independent from  $\alpha$.
\item If $Z\in\Gamma^*(\alpha)$, we can assume without loss of generality that, for some neighborhood of radius
$0<\rho<\min\{\delta,\delta_{\Gamma}\}$ of the point $Z=(Z_1,\ldots,Z_{N+1})$, $\partial_L\mathscr{C}_{\Omega}$
coincides with the hyperplane $\mathbb{R}^{N+1}\cap\{x_{N}=0\}$ and $\Gamma^*(\alpha)\subset\mathbb{R}_+^{N+1}\cap\{x_{N}=0,x_{N-1}=0\}$,
in such a way that in $\Sigma_{\mathcal{D}}^*(\alpha)$ we have $x_{N-1}\geq0$ and, in $\Sigma_{\mathcal{N}}^*(\alpha)$ we have $x_{N-1}<0$.
Now, $\mathscr{C}_{\Omega}(Z,\rho)$ is transformed by the bi-Lipschitz transform (that in fact keeps the extension variable unchanged)
%\footnote{We only perform a transformation in the $x_1,\ldots,x_N$ variables, without change the extension variable $y$.},
\begin{equation*}
x_i=\xi_i,\quad i=1,2,\ldots,N-1,
\end{equation*}
\begin{equation*}
x_N=\left\{
        \begin{tabular}{ll}
        $\xi_N$ & if $\xi_{N-1}<0$, \\
        $\xi_N-\xi_{N-1}$ & if $\xi_{N-1}\geq0$, \\
        \end{tabular}
        \right.
\end{equation*}
into a set $\mathcal{O}_{\rho}(Z)=\mathcal{O}_{\rho}^1(Z)\cup\mathcal{O}_{\rho}^2(Z)$ with
\begin{align*}
\mathcal{O}_{\rho}^1(Z)=&\left\{\xi_{N}\geq0,\xi_{N-1}<0,\sum_{i=1}^{N}(\xi_{i}-Z_i)^2+(y-Z_{N+1})^2\leq \rho^2\right\},\\
\mathcal{O}_{\rho}^2(Z)=&
\left\{
\begin{tabular}{l}
$\displaystyle\xi_{N-1}\geq0, \sum_{i=1}^{N-1}(\xi_{i}-Z_i)^2+(y-Z_{N+1})^{2}\leq \rho^2,$\\
$\displaystyle \xi_{N-1}\leq\xi_{N}\leq\xi_{N-1}+\left(\rho^2-\sum_{i=1}^{N-1}(\xi_{i}-Z_i)^2-(y-Z_{N+1})^2\right)^{\frac12}$
\end{tabular}
\right\}.
\end{align*}
Moreover $\Sigma_{\mathcal{D}}^*\cap B_{\rho}(Z)$ is transformed into the set
\begin{equation*}
\mathcal{D}_{\rho}(Z)=\left\{\xi_{N}=\xi_{N-1},\xi_{N-1}\geq0, \sum_{i=1}^{N-1}(\xi_{i}-Z_i)^2 + (y-Z_{N+1})^2\leq\rho^2\right\}.
\end{equation*}
Given $X_0\in\mathcal{O}_{\rho}(Z)$, we  use again  the representation (see \cite[cfr. 13.1]{S}):
\begin{equation*}
\Pi(X_0,\mathcal{D}_{\rho}(Z),\mathcal{O}_{\rho}(Z))=\frac{1}{|\mathbb{S}_{N}(X_0)|}\int_{\mathcal{D}_{\rho}(Z)}\frac{1}{|X_0-Y|^{N}}\cos(\psi)d\sigma,
\end{equation*}
where $\cos(\psi)=\langle \frac{X_0-Y}{|X_0-Y|}, \vec{v}\rangle$, with $\vec{v}$ the normal vector to $\{\xi_{N}=\xi_{N-1}\}\cap\mathbb{R}_+^{N+1}$. Since $\cos(\psi)$ vanish only when $X_0\in\mathcal{D}_{\rho}(Z)$ we conclude that $\Pi(X_0,\mathcal{D}_{\rho}(Z),\mathbb{R}_+^{N+1})\geq\varphi>0$ for all $X_0\in\mathcal{O}_{\rho}(Z)$ and some $\varphi>0$ independent of $\alpha$. On the other hand, it is immediate that $\varphi$ is independent of $\mathcal{\rho}$. Hence, inequality \eqref{sobA} holds true with $\beta_s\leq\frac{c_s}{\varphi}$ also independent of $\alpha$.
\end{enumerate}
Let us define
\begin{equation}\label{rho_moving}
\overline{\rho}_{\alpha}(Z):=\left\{
\begin{tabular}{ll}
$\min\{\delta,dist(Z,\Sigma_{\mathcal{D}}^*)\}$,& if $Z\in\overline{\mathscr{C}}_{\Omega}\backslash\Sigma_{\mathcal{D}}^*(\alpha)$,\\
$\min\{\delta,dist(Z,\Gamma^*)\}$,& if $Z\in\Sigma_{\mathcal{D}}^*(\alpha)\setminus\Gamma^*(\alpha)$,\\
$\min\{\delta,\delta_{\Gamma}\}$,& if $Z\in\Gamma^*(\alpha)$.
\end{tabular}
\right.
\end{equation}
As a consequence of (1)--(3) above, we deduce
\begin{enumerate}
\item[(i)] by  \eqref{lambda2}, the constant $\Lambda$ appearing in Theorem \ref{thcota} %, Theorem \ref{thcota_inferior}
and Theorem
\ref{th:pre_holder}, is independent of $\alpha$. Hence, inequality \eqref{ineq:bound_infty} does not depends on $\alpha$ and also  the number
$0<\mathcal{H}<1$ in Theorem \ref{pre2_holder} is independent from $\alpha$.
\item[(ii)] by  \eqref{n_indep}, the constant $\overline{\eta}$ in Theorem \ref{th:pre_holder} is independent from $\alpha$ and,
by  \eqref{exponente_regularidad}, also  that $0<\gamma<\frac{1}{2}$ is independent from $\alpha$.
\end{enumerate}
Then, given $u_{\alpha}$ a solution to problem \eqref{pal} with $\alpha\in I_{\varepsilon}$, by  Theorem \ref{holder_result}, we deduce
\begin{equation*}
\|u_{\alpha}\|_{\mathcal{C}^{\gamma} (\Omega)}\leq \mathscr{H}_{\alpha},
\end{equation*}
with $\gamma=\min\{\tau,2s-\frac{N}{p}\}<\frac12$ independent of $\alpha$ and $\mathscr{H}_{\alpha}=\max\{9\mathcal{H},\frac{C(N,s,\alpha)
\|f\|_p}{\delta_{\texttt{H},\alpha}^{\tau}}\}$ with the constants $0<\tau<\frac12$ and $\delta_{\texttt{H},\alpha}$ given as in Corollary \ref{holder_abajo}. Now, if we consider the family $\{u_{\alpha}\}_{\alpha\in I_{\varepsilon}}$, since
$\overline{\rho}_{\alpha_1}(Z)\leq\overline{\rho}_{\alpha_2}(Z)$ it is clear that $\delta_{\texttt{H},\alpha_1}\leq\delta_{\texttt{H},\alpha_2}$
and, therefore, $\mathscr{H}_{\alpha_1}\geq\mathscr{H}_{\alpha_2}$ for all $\alpha_1,\alpha_2\in[\varepsilon,|\partial\Omega|]$,
$\alpha_1\leq\alpha_2$. Therefore, we can take $0<\gamma<\frac12$ and $\mathscr{H}_{\varepsilon}=\max\{9\mathcal{H},\frac{C(N,s,\varepsilon)\|f
\|_p}{\delta_{\texttt{H},\varepsilon}^{\tau}}\}$ independent from $\alpha$ such that
\begin{equation*}
\|u_{\alpha}\|_{\mathcal{C}^{\gamma} (\Omega)}\leq \mathscr{H}_{\varepsilon},
\end{equation*}
To conclude, we observe that the condition $\alpha\in[\varepsilon,|\partial\Omega|]$ is necessary in order to control the H\"older norm
of the family $\{u_{\alpha}\}_{\alpha\in I_{\varepsilon}}$. If we let $\alpha=|\Sigma_{\mathcal{D}}(\alpha)|\to0^+$, then it is clear that $|\Sigma_{\mathcal{D}}^*(\alpha)\cap\overline{\mathscr{C}}_{\Omega}(Z,\rho)|\to0$
 for any $Z\in\overline{\mathscr{C}}_{\Omega}$ and $\rho>0$. Thus, if $\alpha\to0^+$, we conclude from \eqref{rho_moving} that
 $\overline{\rho}_{\alpha}(Z)\to0$ for any $Z\in\Sigma_{\mathcal{D}}^*$ and, hence, $\delta_{\texttt{H},\alpha}\to0$ while  $\mathscr{H}_{\alpha}\to+\infty$. 
 \end{proof}
\begin{remark}
Given an interphase point $Z\in\Gamma^*$, it is clear from \eqref{rho_moving}, that we can choose an uniform $\rho_{\varepsilon}>0$ in
the lines of \cite[Corollary 6.1]{ColP}. In fact, it is enough to choose $\delta_{\Gamma}$ in \eqref{rho_moving} in such a way that
$\Sigma_{\mathcal{D}}^*(\varepsilon)\cap\overline{\mathscr{C}}_{\Omega}(Z,\rho)$ is contained in some hyperplane (see (3) in the proof
of Theorem  \ref{cor:reg}). Clearly, this Dirichlet boundary part, say $\left(\{x_{N}=0,x_{N-1}\geq0\}\cap\mathbb{R}_+^{N+1}\right)
\cap B_{\rho_{\varepsilon}}(Z)$ converges to an empty set as $\rho_{\varepsilon}\to0$. 
% and the control on the H\"older norm is lost.
\end{remark}


\begin{thebibliography}{10}


\bibitem{BrCdPS} C. Brandle. E. Colorado, A. de Pablo, U. Sanchez, \textit{A concave-convex elliptic problem involving the fractional Laplacian},  Proc. Roy. Soc. Edinburgh Sect. A {\bf 143} (2013), 39?71




\bibitem{CS} L. Caffarelli, L.Silvestre, \textit{An extension problem related to the fractional Laplacian},
Comm. Partial Differential Equations, {\bf 32} (2007),   1245--1260.

\bibitem{CT} X. Cabr\'e, J. Tan, \textit{Positive solutions of nonlinear problems involving the square root of the Laplacian.} Adv. Math,  {\bf 224} (2010),   2052--2093.

\bibitem{CCLO} J. Carmona, E. Colorado, T. Leonori, A. Ortega, {\it Semilinear
fractional elliptic problems with mixed Dirichlet-Neumann boundary
conditions}. Preprint, arXiv:1902.08925v1.

\bibitem{CO} E. Colorado, A. Ortega, \textit{The Brezis-Nirenberg problem for the fractional Laplacian with mixed Dirichlet-Neumann boundary conditions},  J. Math. Anal. Appl. {\bf 473} (2019),  1002--1025.

\bibitem{ColP} E. Colorado, I. Peral, \textit{Semilinear elliptic problems with mixed Dirichlet-Neumann boundary conditions},  Journal of Functional Analysis, {\bf 199} (2003), 468--507.

\bibitem{FKS} E. B. Fabes, C. E. Kenig, R. P. Serapioni, \textit{The local regularity of solutions of degenerate elliptic equations}, Comm. in Partial Differential Equations, {\bf 7}(1982),  77--116.


\bibitem {KS} D. Kinderlehrer, G. Stampacchia, \textit{An introduction to Variational Inequalities an Their Applications}, (Pure and Applied Mathematics 88) (Eds.)-Academic Press,  Elsevier (1980).

\bibitem{LM} J.-L. Lions, E. Magenes, {\it Non-homogeneous boundary value problems and applications.},  Vol. I , (Translated from the French by P. Kenneth) \textbf{GMW 181}, Springer-Verlag, New York-Heidelberg, 1972. xvi+357 pp.

\bibitem{S} G. Stampacchia, \textit{Problemi al contorno ellittici, con dati discontinui, dotati di soluzioni Holderiane},  Ann. Mat. Pura Appl.  {\bf 51} (1960) 1--37.

\bibitem{Ste} E. M. Stein, \textit{Harmonic Analysis: Real-Variable Methods, Orthogonality, and Oscillatory Integrals}, Princeton University Press (1993).
\end{thebibliography}
\end{document}